\documentclass[12pt]{article}
\usepackage{amsfonts,amsmath,latexsym,amssymb,mathrsfs, url}

\usepackage{amsthm}

\newtheorem{theorem}{Theorem}[section]
\newtheorem{lemma}[theorem]{Lemma}

\newtheorem{remarkno}[theorem]{Remark}

\numberwithin{equation}{section}

\def\numberlikeadb{\global\def\theequation{\thesection.\arabic{equation}}}
\numberlikeadb

\newcommand{\Cov}{{\mbox{Cov}}}
\newcommand{\eqs}{\begin{eqnarray*}}
\newcommand{\ens}{\end{eqnarray*}}
\newcommand{\eqa}{\begin{eqnarray}}
\newcommand{\ena}{\end{eqnarray}}
\newcommand{\eq}{\begin{equation}}
\newcommand{\en}{\end{equation}}

\def\ignore#1{}

\def\half{{\textstyle{\frac12}}}

\def\be{{\mathbf e}}
\def\bn{{\mathbf n}}
\def\beps{{\boldsymbol\varepsilon}}

\def\ul{^{(l)}}
\def\uj{^{(j)}}

\def\bU{{\mathbf U}}
\def\bV{{\mathbf V}}

\def\Ref#1{(\ref{#1})}
\def\a{\alpha}

\def\s{\sigma}

\def\l{\lambda}
\def\L{\Lambda}

\def\law{{\cal L}}

\def\Def{\ :=\ }

\def\giv{\,|\,}

\def\tfrac#1#2{{\textstyle{\frac#1#2}}}

\def\Po{{\rm Po\,}}

\def\non{\nonumber}
\def\th{\theta}

\def\e{\varepsilon}
\def\m{\mu}

\def\var{{\rm Var}}

\def\g{\gamma}
\def\h{\eta}

\def\nti{n\to\infty}

\def\un{^{(n)}}
\def\Bl{\left(}
\def\Br{\right)}

\def\Blb{\left\{}
\def\Brb{\right\}}

\def\ui{^{(1)}}

\def\ut{^{(2)}}
\def\uh{^{(3)}}
\def\uf{^{(4)}}
\def\uv{^{(5)}}

\def\Bi{{\rm Bi\,}}

\def\nin{\noindent}
\def\D{\Delta}

\def\ex{{\mathbb E}}
\def\pr{{\mathbb P}}
\def\msk{\medskip}

\def\Be{{\rm Be\,}}
\def\cF{{\cal F}}
\def\tcF{\widetilde{\cF}}
\def\cG{{\cal G}}
\def\d{\delta}

\def\Eq{\ =\ }
\def\Le{\ \le\ }

\def\Giv{\,\Big|\,}
\def\cupdot{\cup\kern-8.2pt\cdot\kern5.5pt}
\def\cov{\Cov}
\def\corr{\mathrm{Corr}}

\def\TT{^\top}

\def\cN{{\mathcal N}}
\def\tcN{{\widetilde{\cN}}}

\def\hW{{\widehat{W}}}

\def\card{{\rm card}}
\def\tU{{\widetilde U}}
\def\btU{\mathbf{\tU}}
\def\tW{{\widetilde W}}
\def\tD{{\widetilde \D}}
\def\tZ{{\widetilde Z}}
\def\hZ{{\widehat Z}}
\def\tm{{\widetilde m}}
\def\hath{{\widehat\h}}
\def\und{'{}^{(n)}}

\def\DG{{\mathcal{DG}}}
\def\oplus{^{(o,+)}}
\def\iplus{^{(i,+)}}

\def\mm{m_1}
\def\so{^{(o)}}
\def\si{^{(i)}}
\def\sos{^{(o*)}}

\def\sb{^{(b)}}
\def\tN{{\widetilde N}}

\def\fp{\kern0.3pt{\mathfrak p}}
\def\cI{\mathcal{I}}
\def\cJ{\mathcal{J}}
\def\tcI{{\widetilde{\cI}}}
\def\hcI{{\widehat{\cI}}}
\def\hI{{\widehat{I}}}
\def\tI{{\widetilde{I}}}
\def\MN{\mathcal{MN}}
\def\gdz{}
\def\DB{\mathcal{DB}}

\def\Bmmi{\Bigl(\frac m{m-1}\Bigr)}
\def\Bmmio{\Bigl(\frac{m_0}{m_0-1}\Bigr)}
\def\Bmmith{\Bigl(\frac{m\th}{m\th-1}\Bigr)}
\def\bzero{\mathbf 0}
\def\bone{\mathbf 1}
\def\il{^{1,l}}
\def\tl{^{2,l}}
\def\ih{^{1,3}}
\def\thh{^{2,3}}

\def\baZ{Z^*}
\def\baZp{Z^{*+}}
\def\usp{^{*+}}
\def\tbaZp{\tZ^{*+}}
\def\tbaZ{\tZ^{*}}
\def\dbar{{\bar d}}
\def\tX{{\widetilde X}}
\def\baW{{\overline{W}}}
\def\jj{{j}}
\def\ii{{i}}
\def\BGW{Bienaym\'e--Galton--Watson}
\def\adbr{}
\def\adbb{}

\begin{document}

\title{Approximating inter-point distances in directed Bernoulli graphs}

\author{
A. D. Barbour\footnote{Institut f\"ur Mathematik, Universit\"at Z\"urich,
Winterthurertrasse 190, CH-8057 Z\"URICH; a.d.barbour@math.uzh.ch.
ADB was supported in part by Australian Research Council project DP220100973.
\msk}
\ and
Gesine Reinert\footnote{Department of Statistics,
University of Oxford, 24--29 St Giles, OXFORD OX1 3LB, UK.
GDR was supported in part by EPSRC grants \adbr{EP/T018445/1, EP/V056883/1, EP/Y028872/1 and EP/X002195/1}.
}\\
Universit\"at Z\"urich and University of Oxford }

\date{}
\maketitle

\begin{abstract}
In directed random graphs, in which edges can be assigned to have one of two directions, or perhaps both,
the distance between two vertices $v$ and~$v'$ can be computed along
paths that are directed from $v$ to~$v'$, or along paths that are directed from $v'$ to~$v$.  
These two distances are in general dependent.
Here, we approximate their joint distribution in the setting of the directed Bernoulli random graph~$\DG(n,p,\th)$, 
obtained as a natural extension of the Bernoulli random graph $\cG(n,p)$ by assigning directions to the edges independently, 
bidirectional with probability~$\th$, 
and either of the two possible choices of single direction with probability $\half(1-\th)$.
The approximation involves two independent copies of a trivariate limiting random vector $(W^*_1,W^*_2,W_3)$
associated with a $3$-type
Bienaym\'e--Galton--Watson process.  \adbr{The approximation error is shown to be typically of order~$O(n^{-1/2}\log n)$;
this asymptotic order is likely to be optimal, even for the corresponding approximation in the Bernoulli
random graph $\cG(n,p)$.}
\end{abstract}

\nin {\bf Keywords:}  Shortest paths, inter-point distances, Bernoulli digraphs.

\msk\nin
{\bf MRC subject classification:}

\section{Introduction}\label{intro}
 \setcounter{equation}{0}
In many networks observed in practice, typical inter-point distances are found to grow with the
logarithm of the number~$n$ of vertices, rather than with a power of~$n$, as would be the case in
spatial geometric graphs.  This is the case in the Bernoulli random graph~$\cG(n,m/n)$ for~$m > 1$ fixed, 
where typical distances between pairs of points in the giant component are comparable to $\log n/\log m$.
Replacing~$m$ by the mean vertex degree~$\dbar_n$ in a given network, the ratio $\log n/\log\dbar_n$
is a reasonable approximation to the average interpoint distance in a number of networks.
These include the  somatic neural network of {\it C.~elegans\/}, 
as considered by Pavlovic et al.~(2014), with $n = 279$;  a  coauthorship network 
from Newman~(2001), with $n = 13'861$; and the political blog network from Adamic \& Glance~(2005), with $n = 1'222$.

The asymptotic distributions of inter-point distances in locally branching graph models, such as the
Bernoulli random graph $\cG(n,m/n)$, have been established in many papers, 
including Barbour \& Reinert \adbr{(2001, 2006, 2011, 2013)},
van der Hofstad, Hooghiemstra \& Van Mieghem~(2005), van den Esker, van der Hofstad, Hooghiemstra \& Znamenski~(2005),
van den Esker, van der Hofstad \& Hooghiemstra~(2008) and van der Hoorn \&  Olvera--Cravioto~(2018).  The strategy 
in each case, in a graph on~$n$ vertices,  is to
choose two vertices $V$ and~$V'$ independently at random, and to approximate the distributions of the sizes of their
neighbourhoods out to distances~$t_n$ at which the {\it expected\/} neighbourhood sizes are roughly~$\sqrt n$.
For neighbourhoods of this relatively small size, a branching process approximation to the sizes of the
neighbourhoods is sufficiently accurate.  Given the sizes of the neighbourhoods, the probability
that they are disjoint can then also be approximated.  If the neighbourhoods of radius~$t_n$ of the vertices $V$ and~$V'$ 
are disjoint, then the graph distance $D(V;V')$ between them exceeds~$2t_n$, implying a corresponding approximation to the probability
 $\pr[D(V;V') > 2t_n]$.  
 
Based on such a strategy, we can derive the following theorem, describing the distribution of the inter-point distances
in $\cG(n,m/n)$ for large~$n$, \adbr{when $m > 1$}.  
In its statement, the random variables $W$ and~$\tW$ represent {\it independent\/} copies
of the random variable $\lim_{r \to \infty} m^{-r}Z_r$ in a Bienaym\'e--Galton--Watson process~$Z$ with $Z_0=1$, having
offspring distribution~$\Po(m)$.  The quantity
\eq\label{B-rn-def}
     r_n \Def \lfloor \half \log n/\log m \rfloor
\en
is chosen to satisfy $\ex\{Z_{r_n}\} \approx \sqrt n$, so that the expected sizes of neighbourhoods of radius close to~$r_n$
are comparable to~$\sqrt n$.
It is then the case that, \adbr{for $m > 1$,}
\eq\label{B-chin-def}
   1 \Le \chi_n \Def m^{-r_n}\sqrt n \Le m.
\en  

\begin{theorem}\label{B-dist}
Let  $V_n$ and~$V'_n$ be a pair of vertices chosen independently and uniformly at random from the vertex set of
a Bernoulli random graph $G \sim \cG(n,m/n)$, with $m > 1$.
Then, for any $n$ sufficiently large, and with $r_n$ and~$\chi_n$ defined in \Ref{B-rn-def} and~\Ref{B-chin-def}, we have
 \eqs
      \lefteqn{\biggl|\pr[D(V_n,V'_n) > 2r_n+u] - \ex\Bl \exp\Blb - m^{u+1} W \tW/\{(m-1)\chi_n^2\} \Brb \Br \biggr|} \\
            && \Le C(m) m^{u/2}\max\{1,m^u\}\,  n^{-1/2}\log n, \phantom{XXXXXXXXXXXX}
 \ens
 for any $u > -2r_n$,
where $C(m)$ is uniformly bounded in any interval of the form $1+\d \le m \le \D$, for $\d,\D > 0$.
\end{theorem}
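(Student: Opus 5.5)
The argument follows the neighbourhood-growth strategy described before the statement. Write $u = u_1 + u_2$ with $u_1 = \lceil u/2\rceil$, $u_2 = \lfloor u/2\rfloor$, and grow the breadth-first neighbourhoods of $V_n$ and $V'_n$ to radii $s = r_n + u_1$ and $s' = r_n + u_2$. We have $D(V_n,V'_n) > 2r_n + u$ if and only if the two balls $B_s(V_n)$ and $B_{s'}(V'_n)$ are disjoint and no edge joins their boundary spheres $S_s(V_n)$ and $S_{s'}(V'_n)$. Explore the two balls sequentially: first $B_s(V_n)$, then $B_{s'}(V'_n)$ in the graph with $B_s(V_n)$ removed. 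Conditional on the explored structure, the edges between the two sphere boundaries are unexamined and independent Bernoulli$(m/n)$. So on the event that the balls are disjoint, the conditional probability of no connecting edge is $(1-m/n)^{|S_s||S'_{s'}|}$, which we compare with $\exp\{-m|S_s||S'_{s'}|/n\}$.

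The first task is to couple each exploration with a Poisson BGW process $Z$ (respectively $\tZ$, independent), with an error of order the expected ball size squared over $n$. The standard binomial-to-Poisson coupling and collision (depletion) bounds give a total variation error of order $\ex|B_s|\,\ex|B_{s'}|/n + \ex|B_s|^2/n \asymp m^{2r_n+u}/n \cdot$ const. This is where the factor $\max\{1,m^u\}$ enters. The case $u \le 0$ needs care, since then the balls have size at most $\sqrt n$. On the coupled event we replace $|S_s|$ by $Z_s$, and the probability becomes $\ex\exp\{-m Z_{s}\tZ_{s'}/n\}$. The intersection of the two balls is already accounted for by the coupling error, because a collision of the second exploration with the first is exactly a depletion event.

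The second step replaces $Z_s\tZ_{s'}$ by its martingale limit. Write $Z_s = m^s W_s$, where $W_s \to W$ and $\ex|W_s - W|^2 = O(m^{-s})$ for Poisson offspring. Then $m Z_s\tZ_{s'}/n = m^{u+1} W_s\tW_{s'}/\chi_n^2$, and $|e^{-x}-e^{-y}| \le |x-y|$ gives an error of order
\[
m^{u}\,\ex|W_s\tW_{s'} - W\tW| = O\bigl(m^{u} m^{-r_n/2 - u/4}\bigr) = O\bigl(m^{u/2}\, n^{-1/4}\bigr),
\]
which is too weak. To improve this, I would refine the exponent at the final generation. Given the first $s$ generations, the number of vertices at distance exactly $s+1$ from the first ball behaves like a sum of Poisson variables. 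So the exact probability of no connection at the next layer is $\exp\{-m^{u+1}W_{s}\tW_{s'}/\{(m-1)\chi_n^2\}\}$-like: summing over all further generations gives the geometric factor $\sum_{j\ge0} m^{-j} = m/(m-1)$, which is where $(m-1)^{-1}$ comes from. Concretely, continue the exploration beyond radius $2r_n+u$ in terms of the BGW limits. Equivalently, condition on $\mathcal F_s$ and use that $\ex[e^{-\lambda W}\mid Z_s]$ is explicit via the Poisson Laplace functional, so the conditional fluctuations of $W - W_s$ are integrated out exactly to first order, leaving an error of order $m^{-r_n}\cdot$(poly) $= O(n^{-1/2})$. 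The factor $\log n$ arises from truncating on events such as $Z_s \le C m^s\log n$, whose complements have small probability by exponential tail bounds for $W$, and from summing collision probabilities over $r_n = O(\log n)$ generations.

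The main obstacle is obtaining the $n^{-1/2}$ rate rather than $n^{-1/4}$. This requires organizing the computation so that the boundary-edge count uses the correct mean $\sum$ over layers (producing the $(m-1)$ factor) and so that the martingale fluctuation enters only through conditional expectations, where it averages out. I would first try to establish a one-step identity of the form
\[
\pr[\text{no connection at layer } k+1 \mid \mathcal F_k] \approx \exp\{-m Z_k\tZ_{k'}/n\},
\]
then take the product across layers, and only at the end pass to limits using $\ex[W\mid\mathcal F_k] = W_k$ together with convexity bounds on the exponential. Uniformity of $C(m)$ on $1+\d\le m\le\D$ follows because all constants depend on $m$ only through $(m-1)^{-1}$, $\log m$ and $\D$.
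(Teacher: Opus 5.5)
Your proposal has a genuine gap at its first step. The total-variation bound you quote, $\ex|B_s|\,\ex|B_{s'}|/n+\ex|B_s|^2/n\asymp m^{2r_n+u}/n$, is a constant times $m^u/\chi_n^2$. Since $1\le\chi_n\le m$, this is of order $m^u$, not $n^{-1/2}\log n$. At radius about $r_n$ the expected number of collisions (cycles) inside a single neighbourhood is already of order one, so $|S_s|=Z_s$ fails with probability bounded away from zero.

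The missing idea, which is how the paper proceeds (Lemmas \ref{L1}--\ref{L4}), is to couple only the \emph{sizes}, in $L^1$ rather than in total variation. This gives $\ex|Z_r-I_r(v)|=O(m^{2r}/n)$, a \emph{relative} error $O(m^r/n)=O(n^{-1/2})$ for $r\approx r_n$. Such an error enters the conditional probability only linearly, through $|(1-x)^a-(1-x)^b|\le x|a-b|$ with $x=m/n$ multiplied by the other neighbourhood size, which is of order $\sqrt n$. So its contribution is $O(n^{-1/2})$. The neighbourhood of $V_n'$ is grown in the graph with the first ball removed and coupled to an independent Poisson process $\tZ$. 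The resulting depletion term $j m^{j}N_r(V)/n$ in Lemma \ref{L4}, summed over the generations $j$ of the second neighbourhood, is one genuine source of the $\log n$ (Remark \ref{B-optimality}). The $\log n$ does not come from truncating $W$.

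Second, the intersection of the two balls is not a depletion error. It has probability bounded away from $0$ and $1$ and belongs to the main term. If $B_{s'}$ is grown in the graph with $B_s(V_n)$ removed, then ``balls disjoint and no sphere--sphere edge'' is exactly the event that no edge joins the ring $S_s(V_n)$ to the whole restricted ball $\cN_{s'}(V_n';V_n)$. Its conditional probability is $(1-m/n)^{|S_s|\,N_{s'}(V_n';V_n)}$: ring times ball, not ring times ring. This is where $m/(m-1)$ comes from, since $m^{-s'}\sum_{i\le s'}\tZ_i\to m\tW/(m-1)$ sums over the \emph{earlier} generations of the second ball, not over generations beyond $2r_n+u$. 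Moreover, with $s+s'=2r_n+u$ this event is $\{D>2r_n+u+1\}$; you need $s+s'+1=2r_n+u$.

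Your idea for the second step, killing the first-order term by the martingale property, is the right one. It must also be applied to the cumulative sum $m^{-s'}\sum_{i\le s'}\tZ_i$. The paper replaces $m^{-(s'-i)}\tZ_{s'-i}$ by $\tW$ one generation at a time, using $|e^{-x}-e^{-x'}-e^{-y}(x'-x)|\le\frac12(x'-x)^2+|x'-x|\,|y-x|$ with $y$ measurable with respect to earlier generations. Each step then costs $O(n^{-1/2})$, and the $O(\log n)$ steps supply the other $\log n$.
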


\noindent The theorem is a refinement of that given in Barbour \& Reinert~(2026), Theorem~11.6.1, in that
the error bound improves on the one given there in Equation~(11.6.20),
where the corresponding bound is of order~$O(n^{-1/8})$.
Justification for $n^{-1/2}\log n$ being the best possible asymptotic order as $\nti$ is given in
Remark~\ref{B-optimality}.

For directed random graphs, in which edges can be assigned to have one of two directions, or perhaps both,
the distance~\adbr{$D$} between two vertices $v$ and~$v'$ can be computed along
paths that are directed from $v$ to~$v'$ ($D(v;v')$), or along paths that are directed from $v'$ to~$v$ ($D(v';v)$).  
This leads to the question of the {\it joint\/} distribution of the two distances,
something not addressed by van der Hoorn \&  Olvera--Cravioto~(2018) in their analysis of the directed configuration model.  
The two distances are in general dependent,
since the presence of an edge directed from vertex $i$ to vertex~$j$ need not be independent of the presence of
an edge between $j$ and~$i$, and this introduces its own complications.
Here, we investigate the problem in the simplest setting, that of the directed Bernoulli random graph~$\DG(n,p,\th)$, 
obtained as a natural extension of the Bernoulli random graph $\cG(n,p)$.  In this model, starting from a
realization of $\cG(n,p)$,  directions are assigned to edges independently, bidirectional
with probability~$\th$, 
and either of the two possible choices of single direction with probability $\half(1-\th)$.
\ignore{
If~$A$ denotes the corresponding random adjacency matrix, it follows that
$\pr[A_{ij}=1] = p\th + \half p(1-\th) = \half p(1 + \th)$ and that $\pr[A_{ij} = A_{ji} = 1] = p\th$, for any pair $(i,j)$.
Hence the {\it conditional\/} probability $\pr[A_{ij} = 1 \giv A_{ji} = 1]$ is given by
\eq\label{DG-cdl-edge-prob}
    \pr[A_{ij} = 1 \giv A_{ji} = 1] \Eq \frac{2\th}{1 + \th}\,,
\en
giving another interpretation to~$\th$.
In the particular instance of the model in which
 all the elements $(A_{ij},\,1 \le i \neq j \le n)$ are {\it independent\/}, and have the same
probability~$\fp$, it follows that $\pr[A_{ij} = 1 \giv A_{ji} = 1] = \pr[A_{ij}=1]$, and hence that
\eq\label{ER-rel-DG-fracp-def}
      \frac{2\th}{1 + \th} \Eq \half p(1 + \th) \Eq \fp.
\en
Thus, in this restricted model, the directed Bernoulli graph~$\DB(n,\fp)$ with edge probability~$\fp$,
the quantities $\th$ and~$p$ are related by the equation
\[
    \frac{4\th}{(1 + \th)^2} \Eq p, 
\]
and both can be recovered from~$\fp$.
}

Fix any $v \in [n]$, and consider first the structure of the out-neighbourhood rings~$\cI^o_r(v)$, $r\ge0$,
consisting of those points that can be reached from the vertex~$v$ by a path of length~$r$ consisting only of
edges that are either bidirectional or directed away from~$v$ along the path, but cannot be reached along any such path of
shorter length; \adbr{set $\cI^o_0(v) = \{v\}$}.  Define $\cN^o_r(v) := \cup_{s=0}^r \cI^o_s(v)$ to be the $r$-out-neighbourhood of~$v$,
and set $I^o_r(v) := \card(\cI^o_r(v))$ and $N^o_r(v) := \card(\cN^o_r(v))$, \adbr{with $I^o_0(v)=1$}.
Let $p_0 := p\th + \half p(1-\th) =\half p(1+ \th)$ denote the probability that, for any given pair $(i,j)$,
there is a directed (possibly bidirectional) edge from $i$ to~$j$.
Then the sizes of the out-neighbourhood rings can be constructed by taking 
$I_r = I_r^o(v)$, $S_r = n - N^o_{r}(v) = n - \sum_{s=0}^r I_s$
and $p = p_0$ in the Reed--Frost recursion 
from the theory of epidemics \adbr{(see, for example, Bailey~(1990), Chapter 12.5)}
that specifies the {\it conditional distribution\/} of~$(I_{r+1},S_{r+1})$, given that the values
of~$S_r$ and~$I_r$ are known: 
\eq\label{Reed-Frost-recursion}
     \law\bigl(I_{r+1} \giv (S_l,I_l), 0 \le l \le r\bigr) \Eq \law(I_{r+1} \giv I_r,I_{r-1},\ldots,I_0) \Eq \Bi(S_r,1 - (1-p)^{I_r}),
\en
with $S_{r+1} = S_r - I_{r+1}$.
The explanation for this is that, given $I_0,\ldots,I_r$,
there are~$I_r$ potential edges between the set of vertices at distance~$r$
from~$v$ and any member~$u$ of the set of $S_r$ vertices at distance more than~$r$
from~$v$, and at least one of them has to be present, and directed outward from~$v$ or bidirectional,
if~$u$ is to be at distance~$(r+1)$ from~$v$. 
Thus the joint distribution of the sizes of the out-neighbourhood rings $(I_l^o(v),\,0\le l\le r)$ is the same as
that of the neighbourhood rings $(I_l(v),\,0\le l\le r)$ in a Bernoulli random graph $\cG(n,p_0)$ with edge probability~$p_0$.
\ignore{As before, we consider the asymptotics
when~$n$ increases and $p = m/n$, for~$m$ fixed.  Defining
\eq\label{ER-rel-DG-mo}
    m_0 \Def \half m(1+\th) \adbd{\Eq np^o},
\en
it then follows from Theorem~\ref{RF-2}, with $r_0 =\a\log n/\log m_0$, that, for any $0 < \a < 2/3$, the
distribution of the out-neighbourhood ring \index{neighbourhood ring process!out-neighbourhood}
process~$(I^o_r(v),\,1\le r\le \a\log n/\log m_0)$ is asymptotically close in total variation
to that of $(Z_r,\,1\le r\le \a\log n/\log m_0)$, where~$Z$ is the Poisson branching process
with offspring mean~$m_0$ starting with $Z_0 = 1$.  \gdz{As in Theorem~\ref{RF-2}, the} upper limit on the range of~$r$
can be dispensed with if $m_0 \le 1$.  
}
The same argument 
can also be used for the in-neighbourhood rings~$\cI^i_r(v)$, $r\ge0$,
\adbr{defined in the same way as the \adbb{out-neighbourhood} rings, but for paths consisting only of edges
 that are either bidirectional or directed towards~$v$ along the path.}
Thus, for any fixed~$v$, the local neighbourhood structures of the in- and out-neighbourhoods, each considered in isolation,
are in distribution the same as for~$\cG(n,p_0)$.
These observations can be used to show that the distribution of the {\it directed\/} distance $D(V;V')$ between two randomly
chosen vertices $V$ and~$V'$ in the model $\DG(n,m/n,\th)$ can be derived from Theorem~\ref{B-dist}, with~$m$ in the
statement of the theorem replaced by $m_0 := \half m(1+\th)$, \adbr{as long as $m_0 > 1$}.

However, the in- and out-neighbourhood processes are in general dependent.  For instance, the (unconditional)
distribution of $I_1^o(v)$ is $\Bi(n-1,p_0)$, whereas, conditional on the event $I_1^i(v) = 0$,
the distribution of~$I_1^o(v)$ is $\Bi(n-1,p')$, where, using the independence of edges and Bayes theorem,
\[
     p' \Eq \frac{\half p(1-\th)}{1 - p + \half p(1- \th)}\,.
\]
\ignore{
These two distributions are only the same if}
\adbr{Thus, unless $p'  = p_0 = \half p(1+\th)$, the unconditional distribution of $I_1^o(v)$ is different from 
its distribution conditional on the event $I_1^i(v) = 0$, implying that  the in- and out-neighbourhood processes 
are dependent.}  As a result, the joint distribution of $D(V;V')$ and~$D(V';V)$ in general needs more elaborate 
consideration, and is addressed in Section~\ref{s-digraph}; see Theorem~\ref{DG-dist}.
\ignore{If $p'  = p_0 = \half p(1+\th)$, it follows that $p = 4\th/(1+\th)^2$, and hence that $p_0 = p' = 2\th/(1+\th)$.
For such pairs~$(p,\th)$, each possible {\it directed\/} edge is present independently with probability
$p' = 2\th/(1+\th)$, 
giving the family of sub-models $(\DB(n,p'), 0 \le p' \le 1)$, where $\DB(n,p') = \DG(n,p,\th)$,
for~$(p,\th)$ the solution to the equations $2\th/(1+\th) = p'$ and $p\th = (p')^2$.}

\section{The Bernoulli random graph}\label{s-Bernoulli}
In this section, we give a proof of Theorem~\ref{B-dist} that is relatively simple,
and introduces methods that are used also for the Bernoulli digraph.

For any vertex~$v$, let~$\cI_r(v)$ denote the set of vertices in~$G$ that are at graph distance
exactly~$r$ from~$v$,  the $r$-neighbourhood ring around~$v$, with $\cI_0(v) = \{v\}$, and write
$\cN_r(v) := \bigcup_{s=0}^r \cI_s(v)$, the $r$-neighbourhood of~$v$.  Define
$I_r(v) := \card(\cI_r(v))$ to be the number of vertices in~$G$ at graph distance equal to $r$ from~$v$, and write
$N_r(v) := \sum_{s=0}^r I_s(v)$ for the number of vertices at graph distance less than or equal to $r$ from $v$.
We now consider the development of the process $(I_r(v),\,r\ge0)$.  We begin by comparing~$I_r(v)$ to the number
of individuals in
the $r$-th generation of a branching process~$Z\un$ with $Z\un_0 = 1$ and with offspring distribution~$\Bi(n-1,m/n)$.

\begin{lemma}\label{L1}
It is possible to construct a copy of the process  $(I_r(v),\,r\ge0)$ and a branching process $Z\un := (Z\un_r,\,r\ge0)$,
with $Z\un_0 = 1$ and with offspring distribution~$\Bi(n-1,m/n)$, on the same probability space,
in such a way that $Z\un_r \ge I_r(v)$ almost surely for all $r \ge 0$.
\end{lemma}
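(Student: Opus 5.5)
We build both processes from one family of independent Bernoulli variables, using the Reed--Frost description \Ref{Reed-Frost-recursion}: given the history, $I_{r+1}(v) \sim \Bi(S_r, 1-(1-p)^{I_r})$ with $p = m/n$. The plan is to explore the graph generation by generation and to attach to each vertex of $\cI_r(v)$ a full set of $n-1$ independent $\Be(m/n)$ ``offspring indicators''. Some of these indicators are the real edge variables of~$G$; the rest are fictitious and independent of everything else. The branching process counts all successes, while $I_{r+1}(v)$ counts only a subset of them.

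The construction proceeds as follows. Order the vertices of $\cI_r(v)$ as $w_1,\ldots,w_{I_r}$. For each~$w_k$, consider the $n-1$ potential partners $u \ne w_k$. For each partner that lies in the set $\mathcal S_r$ of $S_r$ vertices at distance more than~$r$ from~$v$, and that has not already been claimed by some $w_{k'}$ with $k'<k$, use the true edge indicator $\bone\{w_k u \in G\}$. For every other partner (already in $\cN_r(v)$, or already claimed), use an independent auxiliary $\Be(m/n)$ variable. Set $\cI_{r+1}(v)$ to be the set of partners in $\mathcal S_r$ that are claimed by some~$w_k$; a vertex is claimed by the first $w_k$ having a true edge to it. This reproduces the graph exactly, because the edges between $\cI_r(v)$ and $\mathcal S_r$ have not been examined before. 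Hence $I_{r+1}(v) = \card\{u\in\mathcal S_r: \exists k,\ w_ku\in G\}$, which matches the Reed--Frost law.

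Define $Z\un_{r+1}$ as the total number of successful indicators, true or auxiliary, over all $n-1$ partners of each of the $I_r$ vertices. Add also an independent $\Bi(n-1,m/n)$ number of offspring for each of the $Z\un_r - I_r(v)$ ``ghost'' individuals, and give ghosts' offspring further independent $\Bi(n-1,m/n)$ families in later generations. Each individual of generation~$r$ then has exactly $n-1$ independent $\Be(m/n)$ trials. These trials are independent of the past, because the true edges used have never been examined before, and the auxiliary variables are fresh. So $Z\un$ is a \BGW\ process with $\Bi(n-1,m/n)$ offspring and $Z\un_0=1=I_0(v)$. Each vertex of $\cI_{r+1}(v)$ corresponds to a distinct successful trial (the claiming one), so $I_{r+1}(v)\le Z\un_{r+1}$ whenever $I_r(v)\le Z\un_r$. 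Induction on~$r$ gives the almost sure domination.

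The main point needing care is independence. We must verify that, conditional on the exploration up to generation~$r$, the trials used in generation $r+1$ are i.i.d.\ $\Be(m/n)$ and independent of the history. For the true edges, this holds because edges from $\cI_r(v)$ to $\mathcal S_r$ were never revealed: earlier generations only examined edges incident to vertices in $\cN_{r-1}(v)$. Edges within $\cI_r(v)$, or from $\cI_r(v)$ back to $\cN_{r-1}(v)$, may already have been revealed. Those edges are exactly the ones replaced by auxiliary variables, which preserves the binomial offspring law.
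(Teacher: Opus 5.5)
Your proposal is correct and follows essentially the same coupling as the paper. Each vertex of the current ring carries $n-1$ independent $\Be(m/n)$ trials: those to unexplored vertices are the genuine edge indicators and the rest are padding. Each newly reached vertex therefore corresponds to a distinct success, and $Z\un$, which counts all successes plus the families of the surplus individuals, dominates $I_r(v)$.

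The difference is only in bookkeeping. The paper draws the counts $\tU_{rj}$ from an abstract array and places them by sampling without replacement, allowing overlaps between different $j$ and taking a union. You instead use the graph's own edge indicators and substitute auxiliary variables for partners already claimed. That choice is why you need the sequential ``fresh variable'' independence argument, which the paper's construction gets for free.
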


\begin{proof}
Let $(V_{ijk},\,i,j,k \ge 1)$ be independent indicator random variables with $\pr[V_{ijk}=1] = m/n$ for all $i,j,k$.
Then $U_{ij} := \sum_{k=1}^{n-1} V_{ijk}$, $i,j \ge 1$, are independent random variables with distribution~$\Bi(n-1,m/n)$.
Let~$Z\un$ be the branching process constructed recursively by taking $Z\un_0 = 1$ and, for $r \ge 1$, by taking
$Z_r := \sum_{j=1}^{Z_{r-1}} U_{rj}$; this branching process has offspring distribution~$\Bi(n-1,m/n)$.
A copy of the process  \adbr{$(I_r(v),\,r\ge0)$}
is also constructed recursively from the~$V_{ijk}$ by taking $I_0(v) = 1$ and $\cI_0(v) = \{v\}$, and, for $r \ge 1$,
by taking $\tU_{rj} :=  \sum_{k=1}^{n - N_{r-1}(v)} V_{rjk}$,  for $1 \le j \le I_{r-1}(v)$,
 to represent the number of edges joining the $j$-th vertex in~$\cI_{r-1}(v)$ (with any prescribed ordering)
to vertices outside~$\cN_{r-1}(v)$.
For each $1 \le j \le I_{r-1}(v)$, $\tU_{rj}$ vertices can now be chosen at random (without replacement) from
$[n] \setminus \cN_{r-1}(v)$, to constitute the vertices outside $\cN_{r-1}(v)$ that are joined to the $j$-th vertex in~$\cI_{r-1}(v)$;
the union over~$j$ of these sets of vertices yields~$\cI_r(v)$.
Note that thus $I_r(v) \le \sum_{j=1}^{I_{r-1}(v)} \tU_{rj} \le \sum_{j=1}^{I_{r-1}(v)} U_{rj}$, and that
thus $I_r(v) \le Z\un_r$ if $I_{r-1}(v) \le Z\un_{r-1}$;  induction starting with $Z_0 = I_0(v) = 1$ thus shows that
$I_r(v) \le Z\un_r$ for all $r \ge 0$.
\end{proof}

\ignore{
to be constructed jointly with the process $(N_r(v),\,r\ge0)$.
For $r \ge 0$, let $\cF_r(v)$ denote the $\s$-algebra generated by $((\cN_s(v),Z\un_s)\,0 \le s \le r)$.
Taking any vertex $v' \in \cN_r(v)$, note that, given~$\cF_r(v)$, the distribution of the number of vertices
connected to~$v'$ that
do not belong to $\cN_r(v)$ has distribution $\Bi(n - N_r(v)-1,m/n)$.  This distribution is stochastically smaller
than~$\Bi(n-1,m/n)$, as can be seen by constructing a random variable $X \sim \Bi(n-1,m/n)$ by first realizing
a random variable $X' \sim \Bi(n - N_r(v)-1,m/n)$, and then adding to it an independent copy of~$\Bi(N_r(v),m/n)$.
Now, for any $r \ge 0$, consider the construction of~$\cN_{r+1}(v)$, given~$\cF_r(v)$, and on the event $\{N_r(v) \le Z\un_r\}$.  
Conditional on~$\cF_r(v)$, the vertices $v' \in \cN_r(v)$ are
connected to numbers of vertices in $[n] \setminus \cN_r(v)$ that are independent of one another, and have
distributions stochastically smaller than~$\Bi(n-1,m/n)$.  Hence the total number~$U_{r+1}(v)$ of such connections is no larger than
the total number of offspring of~$N_r(v)$ individuals in one generation of a branching process~$Z\un$ with
offspring distribution~$\Bi(n-1,m/n)$, and thus, given $N_r(v) \le Z\un_r$, it follows that~$U_{r+1}(v)$ has conditional distribution
stochastically smaller that that of~$Z\un_{r+1}(v)$, given~$\cF_(v)r$.   Hence, for example using the quantile coupling,
it is possible to realize $Z\un_{r+1}$ and~$U_{r+1}(v)$ together on the same probablilty space, in such a way that 
$Z\un_{r+1} \ge U_{r+1}(v)$.  Since $N_{r+1}(v) \le U_{r+1}(v)$, because 
each vertex in~$\cN_{r+1}(v)$ must be connected to at least one vertex in~$\cN_r(v)$, it thus follows that, with this coupling,
$Z\un_{r+1} \ge N_{r+1}(v)$ also.   Thus, repeating this construction for each $r \ge 0$, and recalling that $Z\un_0 = N_0(v) = 1$,
it follows that $Z\un_r \ge N_r(v)$ almost surely for all $r \ge 0$.
}

The conclusion of Lemma~\ref{L1} is useful for bounding elements in the approximation errors that arise later in the argument.
However, it is also important to be able to show that $Z\un_r$ and~$I_r(v)$, as constructed in Lemma~\ref{L1}, are not too far apart.
Let $\cF_r(v)$ denote the $\s$-algebra generated by $\{(\cI_s(v),Z\un_s),\,0 \le s \le r\}$.

\begin{lemma}\label{L2}
The processes $(Z\un_r,\,r\ge0)$ and $(I_r(v),\,r\ge0)$, as constructed in the proof of Lemma~\ref{L1},
are such that
\[
   \ex |Z\un_r - I_r(v)| \Le  (m/n)\h'_0(r,m) \Le n^{-1}  m^{2r}\,\Bmmi^3\,, \qquad r \ge 0,
\]
where $\h'_0(r,m)$, defined in~\Ref{h0-dash-def}, is increasing in both $r$ and~\adbr{$m > 1$}.
\end{lemma}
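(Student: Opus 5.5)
Since $Z\un_r \ge I_r(v)$ almost surely by Lemma~\ref{L1}, we have $\ex|Z\un_r - I_r(v)| = \ex\{Z\un_r - I_r(v)\} =: \ex D_r$. The plan is to derive a one-step recursion for $\ex D_r$ by conditioning on $\cF_{r-1}(v)$, and then to iterate it. In the coupling of Lemma~\ref{L1}, the difference $D_r$ splits into three parts:
\[
   D_r \Eq \sum_{j=I_{r-1}(v)+1}^{Z\un_{r-1}} U_{rj} \;+\; \sum_{j=1}^{I_{r-1}(v)} (U_{rj} - \tU_{rj}) \;+\; \Bigl(\sum_{j=1}^{I_{r-1}(v)} \tU_{rj} - I_r(v)\Bigr).
\]
The first part comes from the excess individuals in the branching process. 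The second comes from the edges to vertices already in $\cN_{r-1}(v)$, namely those $V_{rjk}$ with $k > n-N_{r-1}(v)$. The third comes from collisions: the chosen vertices are merged into $\cI_r(v)$, so a vertex chosen for several $j$ is counted only once.

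Each part has a simple conditional mean given $\cF_{r-1}(v)$. The first part has conditional mean $\frac{(n-1)m}{n}\, D_{r-1} \le m D_{r-1}$. The second has conditional mean $I_{r-1}(v)\,(N_{r-1}(v)-1)\,m/n \le (m/n)\, I_{r-1}(v)\, N_{r-1}(v)$. For the third, given the counts $T := \sum_j \tU_{rj}$, the number of lost vertices is at most the number of coincident pairs among $T$ draws from a set of size $S = n - N_{r-1}(v)$, drawing without replacement within each $j$. Its conditional expectation is therefore at most $\binom{T}{2}/S$. Using $\ex\{T(T-1)\mid\cF_{r-1}\} \le I_{r-1}^2 m^2$ (since $T$ is binomial with mean at most $mI_{r-1}$), this is at most $m^2 I_{r-1}(v)^2/(2S)$. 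To avoid the denominator $S$ becoming small, I would instead bound the collision count directly via the underlying indicators: a vertex $u$ is double-counted only if it is joined to two vertices of $\cI_{r-1}$, which has probability at most $\binom{I_{r-1}}{2}(m/n)^2$ per $u$. Summing over at most $n$ vertices gives conditional mean at most $\tfrac12 m^2 I_{r-1}(v)^2/n$. The first plausible route, with $S$ in the denominator, is only a heuristic; the indicator argument is the one I would make rigorous, by viewing the random choice without replacement as equivalent in law to the actual graph edges.

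Taking expectations and using $I_s(v) \le Z\un_s$ and $N_{r-1}(v) \le \sum_{s<r} Z\un_s$ gives
\[
   \ex D_r \Le m\,\ex D_{r-1} + \frac mn\Bigl(\ex\Bigl\{Z\un_{r-1}\sum_{s=0}^{r-1} Z\un_s\Bigr\} + \tfrac12 m\,\ex\{(Z\un_{r-1})^2\}\Bigr), \qquad D_0 = 0.
\]
The moments of the branching process are explicit. We have $\ex Z\un_s \le m^s$, and the standard second-moment formula with offspring variance at most $m$ gives $\ex (Z\un_s)^2 \le m^{2s}\, m/(m-1)$. For $s<t$, $\ex\{Z\un_s Z\un_t\} \le m^{t-s}\,\ex (Z\un_s)^2$. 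Hence the bracket is at most a constant times $m^{2(r-1)}\,(m/(m-1))^2$. Iterating the recursion,
\[
   \ex D_r \Le \frac mn\,\h'_0(r,m), \qquad \h'_0(r,m) \Def \sum_{s=1}^{r} m^{r-s} c_{s-1},
\]
where $c_{s-1}$ is the explicit bound on the bracket at step $s$. Each term is increasing in $r$ and in $m$ for $m>1$, which gives the claimed monotonicity. The geometric sum $\sum_s m^{r-s} m^{2(s-1)} \le m^{2r-2}\, m/(m-1)$ then yields the bound $n^{-1} m^{2r} (m/(m-1))^3$.

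The main obstacle is keeping the constants tight enough to get exactly $(m/(m-1))^3$. One factor of $m/(m-1)$ comes from the second moments, one from summing the $Z\un_s$ over $s$, and one from the final geometric sum over the recursion. The contributions of the second and third parts must each be absorbed with total constant at most $m^{-1}$ times these factors, which may require combining the two terms carefully rather than bounding them separately.
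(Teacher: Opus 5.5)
Your proposal is correct and is essentially the paper's proof: your three-part decomposition of $Z\un_r - I_r(v)$ (excess individuals, edges into $\cN_{r-1}(v)$, collisions) reproduces, up to replacing $m_n$ by $m$, the recursion $m_n^{r} - n_{r} \le m_n(m_n^{r-1} - n_{r-1}) + \e_n(r-1,m)$. The paper gets that recursion from \Ref{2.1} via the Reed--Frost conditional mean and Bonferroni, and the iteration and the branching-process second-moment bounds are the same as yours. The constant you were worried about closes with room to spare: your bracket is at most $m^{2(r-1)}\{\Bmmi^2 + m^2/(2(m-1))\} \le m^{2r-1}\Bmmi^2$, and your geometric sum then gives exactly $n^{-1}m^{2r}\Bmmi^3$.
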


\begin{proof}
 Fix any $r \ge 1$.  Then 
 the conditional probability, given~$\cF_r(v)$, that a vertex in
 $[n] \setminus \cN_r(v)$  belongs to $\cI_{r+1}(v)$ is $1 - (1 - m/n)^{I_r(v)}$.  
 Since $sx - \binom s2 x^2 \le 1 - (1-x)^s \le sx$ in $x \ge 0$, it thus follows,
 taking $x = m/n$ and $s = I_r(v)$, that 
 \[
        (n - N_r(v)) \frac{m I_r(v)}n\,\Bl 1 -  \frac{m I_r(v)}{2n}\Br  \Le \ex\{I_{r+1}(v) \giv \cF_r(v)\} \Le   m_n I_r(v),
 \]
 where $m_n := (n-1)m/n < m$, using $N_r(v) \ge 1$. Writing $n_r := \ex\{I_r(v)\}$ and taking expectations, this in turn implies that
 \eq\label{2.1}
      m_n n_r - \e_n(r,m) \Le n_{r+1} \Le m_n n_r,
 \en
where
\eq\label{eps-def}
  \e_n(r,m) \Def (m/n) \bigl(\ex\{I_r(v) N_r(v)\} + \tfrac12 m\ex\{(I_r(v))^2\} \bigr).
\en
Iterating~\Ref{2.1}, and because $n_0 = 1$, we deduce that, for all $r \ge 0$,
\eq\label{2.2}
    m_n^r - \h_{0,n}(r,m) \Le n_r \Le m_n^r,
\en
where
\eq\label{h0-def}
    \h_{0,n}(r,m) \Def \sum_{j=0}^{r-1} m_n^{r-1-j} \e_n(j,m). 
\en

Now, because the variance of the distribution \hbox{$\Bi(n-1,m/n)$} is less
than~$m$, it follows from \adbr{Harris~(1963), Chapter~1, Theorem~5.1}, 
that
\eq\label{variance-bnd}
    \ex\{(Z\un_s)^2\} \Le \frac{m^s(m^{s+1}-1)}{m-1} \Le m^{2s}\Bmmi, \quad m > 1,\ \adbr{s \ge 0},
\en
where the first bound is increasing in both $s$ and~$m$ in $m > 1$.
Hence, since $\ex\{Z\un_r \giv Z\un_s\} = m_n^{r-s}Z\un_s$ in $r \ge s$, it follows
from Lemma~\ref{L1}, 
and using $m_n \le m$, that
\eq\label{IsIr-bnd}
\begin{split}
     \ex\{I_s(v)I_r(v)\} &\Le \ex\{Z\un_s Z\un_r\} \Eq m^{r-s}\ex\{(Z\un_s)^2\} \\
      &\Le m^r \,\frac{m^{s+1}-1}{m-1}\ =:\ m^r\h_1(s,m),\quad \adbr{r \ge s},
\end{split}
\en
where 
\eq\label{eta1-bnd}
   \h_1(s,m) \Def \frac{m^{s+1}-1}{m-1} \Le m^s\,\frac m{m-1}
\en
is increasing in both $s$ and~$m$ in $m > 1$.
Hence, with a little calculation based on~\Ref{IsIr-bnd},  it follows that
\eqa
   \ex\{N_r^2(v)\} &=& \sum_{s=0}^r \ex\{I_s^2(v)\} + 2\sum_{0 \le s < t \le r}\ex\{I_s(v)I_t(v)\}  \non\\
                   &\le& \h_2(r,m) \Def \sum_{s=0}^r m^{s}\h_1(s,m) + 2\sum_{0 \le s < t \le r} m^t \h_1(s.m) \label{N-squared-bnd-0}\\
                   &\le& m^{2r}\Bmmi^3\,, \label{N-squared-bnd}
\ena
where $\h_2(r,m)$ is increasing in both $s$ and~$m$ in $m > 1$.
It then follows from \Ref{IsIr-bnd} and~\Ref{N-squared-bnd}, using Cauchy--Schwarz, that
\eqa
    (n/m)\e_n(r,m) &=& \ex\{I_r(v) N_r(v)\} + \tfrac12 m\ex\{(I_r(v))^2\} \non\\
   &\le& \h_3(r,m) \Def \sqrt{m^r\h_1(r,m) \h_2(r,m)} + \tfrac12 m^r\h_1(r,m) \label{IN+mI-squared-0}\\
   &\le& m^{2r}\Bmmi^2 + \frac{m^{2r+2}}{2(m-1)} \Le m^{2r+1}\,\Bmmi^2, \label{IN+mI-squared}
\ena
 where $\h_3(r,m)$ 
is increasing in both $r$ and~$m$ in $r \ge 1$ and $m > 1$.
Hence, from~\Ref{h0-def}, we have
\[
   \h_{0,n}(r,m) \Le \frac mn \h'_0(r,m).
\]
where
\eq\label{h0-dash-def}
    \h'_0(r,m) \Def \sum_{j=0}^{r-1} m^{r-1-j} \h_3\un(\adbr{j},m) \Le m^{2r-1}\,\Bmmi^3
\en
is increasing in both $r$ and~$m$ in $r \ge 1$ and $m > 1$.
\ignore{
\eq\label{2.2}
    m_n^r - \h_{0,n}(r,m) \Le n_r \Le m_n^r,
\en
where
\eq\label{h0-def}
    \h_{0,n}(r,m) \Def \sum_{j=0}^{r-1} m_n^{r-1-j} \e_n(j,m). 
\en
Thus, iterating~\Ref{2.1}, and because $n_0 = 1$, we deduce that, for all $r \ge 0$,
\eq\label{2.2}
    m_n^r - (m/n)\h_2'(r,m) \Le n_r \Le m_n^r,
\en
where
\eq\label{h-dash-def}
    \h_2'(r,m) \Def \sum_{j=0}^{r-1} m_n^{r-1-j} \h_2(j,m) \Le m^{2r-1}\,\Bigl(\frac m{m-1}\Bigr)^3 
\en
is increasing in both $r$ and~$m$ in $m > 1$.
}

Since $\ex Z\un_r = m_n^r$ and $Z\un_r \ge I_r(v)$ a.s.\ for all~$r$,
it thus follows that $\ex|Z\un_r - I_r(v)| \le (m/n)\h_0'(r,m)$ for all~$r \ge 0$, proving the lemma.
\end{proof}

The fact that the approximating branching process~$Z\un$ is different for each~$n$ is inconvenient for the
asymptotics.  Instead, we prefer to use a branching process~$Z$ with $Z_0=1$ and with offspring distribution
the Poisson distribution~$\Po(m)$ with mean~$m$.  A simple argument now shows that this makes relatively little difference.

\begin{lemma}\label{L3}
For any $n \ge m \max\{2,1/(\sqrt m - 1)\}$,
the processes $(I_r(v),\,r\ge0)$ and $(Z_r,\,r\ge0)$ can be constructed on the same probability space in such a way
that
\[
   \ex|Z_r - I_r(v)| \Le (m/n)\h(r,m),\qquad r \ge 0,
\]
where $\h(0,m) = 0$ and
\eq\label{eta-bnd}
    \h(r,m) \Le  \frac{m^{2r+2}}{(m-1)^3} + 3r m^{3r/2} \Le 4m^{2r}\,\Bmmi^3, 
               \qquad r\ge1,
\en
is increasing in both $r$ and~$m$ in $m > 1$.
\end{lemma}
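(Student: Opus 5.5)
We couple in two stages, using the triangle inequality
\[
\ex|Z_r - I_r(v)| \Le \ex|Z_r - Z\un_r| + \ex|Z\un_r - I_r(v)|,
\]
where $Z\un$ is the binomial branching process of Lemmas~\ref{L1} and~\ref{L2}. For the second term we use Lemma~\ref{L2} directly, which gives $(m/n)\h_0'(r,m) \le (m/n)\,m^{2r-1}(m/(m-1))^3 = (m/n)\,m^{2r+2}/(m-1)^3$. This accounts for the first part of~\Ref{eta-bnd}.

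For the first term we couple each $\Bi(n-1,m/n)$ offspring variable $U_{rj}$ with a $\Po(m)$ variable $P_{rj}$, choosing the pairs independent over $(r,j)$. We use the maximal coupling, so that $\pr[U_{rj}\ne P_{rj}] = d_{TV}(\Bi(n-1,m/n),\Po(m))$. Here we want a bound on $\ex|U_{rj}-P_{rj}|$ of order $m/n$. One way is to realize both variables monotonically: write $P = \sum_{k=1}^{n-1} P_k + P'$, with $P_k \sim \Po(-\log(1-m/n))$ and $P' \sim \Po(m + (n-1)\log(1-m/n))$, and set $U=\sum_k \min(P_k,1)$. Since $-\log(1-x)\ge x$, we have $U\le \sum_k P_k$, and
\[
\ex|P-U| \Le |m + (n-1)\log(1-m/n)| + (n-1)\ex(P_k-1)^+ .
\]
The first quantity is $O(m/n + m^2/n)$. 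The sum is $O((n-1)(m/n)^2)=O(m^2/n)$. The condition $n\ge 2m$ keeps these constants explicit. So $\ex|U-P|\le c\,m^2/n$ for some explicit small $c$.

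Next we propagate the discrepancy through the generations. Write $Z_r = \sum_{j\le Z_{r-1}} P_{rj}$ and $Z\un_r=\sum_{j\le Z\un_{r-1}} U_{rj}$, using common indices $j$. Then
\[
|Z_r - Z\un_r| \Le \sum_{j\le Z\un_{r-1}}|P_{rj}-U_{rj}| + \sum_{j \text{ between } Z\un_{r-1} \text{ and } Z_{r-1}} P_{rj} .
\]
By independence of the generation-$r$ variables from generation $r-1$, we get
\[
\ex|Z_r-Z\un_r| \le m\,\ex|Z_{r-1}-Z\un_{r-1}| + m_n^{r-1}\,c\,m^2/n .
\]
Iterating gives a bound of order $r m^{r}\cdot m/n$, comfortably within $(m/n)\,3r m^{3r/2}$.

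The main obstacle is exactly where the stated form $3rm^{3r/2}$ comes from. I would guess the authors intend a bound that grows like $m^{3r/2}$, probably from using $\sqrt{m}$ factors via Cauchy--Schwarz. The hypothesis $n \ge m/(\sqrt m-1)$ is equivalent to $m(1+1/n)\le m^{3/2}$... more precisely $m + m/n\cdot$const $\le m^{3/2}$. This suggests that each iteration step is bounded with growth factor $m^{3/2}$ in place of $m$, after absorbing an $O(m/n)$ correction to the mean, for instance when $Z$ and $Z\un$ are ordered the other way. So in the recursion I would bound the growth factor crudely by $m+m/n\le m^{3/2}$ using this hypothesis, yielding $\ex|Z_r-Z\un_r|\le (m/n)\,r\,m^{3r/2}\cdot 3$ after summing the geometric-type terms.

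Finally we check monotonicity in $r$ and $m$, which holds since each piece is increasing. We also check $\h(0,m)=0$, which is trivial since $Z_0=I_0=1$. The last inequality, $m^{2r+2}/(m-1)^3+3rm^{3r/2}\le 4m^{2r}(m/(m-1))^3$, follows from $3r m^{3r/2}\le 3m^{2r}\cdot m^{-r/2} r\le 3m^{2r}(m/(m-1))^3$. That last step uses $r m^{-r/2}\le 2/(e\log m)\le (m/(m-1))^3$, which is elementary.
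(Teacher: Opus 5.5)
Your proof is correct, and it takes a different route from the paper's, ending with a sharper bound.

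\textbf{The paper's route.} The paper couples whole processes monotonically, in two stages. First $Z^{(n)}\le\widetilde Z^{(n)}$, where $\widetilde Z^{(n)}$ has $\Po(\widetilde m_n)$ offspring with $\widetilde m_n=-(n-1)\log(1-m/n)$. Then $\widetilde Z^{(n)}$ is compared with $Z$ using the stochastic monotonicity of $\Po(\mu)$ in $\mu$. Under monotone couplings the $L^1$ distance is just a difference of means, so no recursion is needed and one gets $\ex|Z^{(n)}_r-Z_r|\le 2(\widetilde m_n^r-m_n^r)+m^r-m_n^r$. The factor $m^{3r/2}$ comes only from the crude step $\widetilde m_n^r-m_n^r\le m_n^r\{(1+m/n)^r-1\}$ together with $1+m/n\le\sqrt m$. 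That step is the only place the hypothesis $n\ge m/(\sqrt m-1)$ is used.

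\textbf{Your route.} You couple each offspring pair individually, which merges the two stages into one. You then propagate the per-individual discrepancy through the linear recursion $a_r\le m\,a_{r-1}+m_n^{r-1}\ex|U-P|$. Since $\ex|U-P|\le(\widetilde m_n-m_n)+|m-\widetilde m_n|\le 2m^2/n$, your constant is $c=2$. This gives $\ex|Z_r-Z^{(n)}_r|\le (m/n)\,2rm^r$, which is smaller than $(m/n)\,3rm^{3r/2}$ and needs only $n\ge 2m$.

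\textbf{Your ``main obstacle'' paragraph.} It is unnecessary and should be deleted. Do not inflate the growth factor to $m^{3/2}$. Simply set $\eta(r,m):=\eta_0'(r,m)+2rm^r$. This is increasing in $r$ and in $m$, and it satisfies \Ref{eta-bnd} by the estimates you already give.

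\textbf{Two small repairs.}
\begin{itemize}
\item The Poisson parameter $m+(n-1)\log(1-m/n)=m-\widetilde m_n$ can be negative, for example when $m>2$ and $n$ is large. In that case you cannot add an independent $P'$. Instead, obtain $P\sim\Po(m)$ by independently thinning $\sum_k P_k$ with retention probability $m/\widetilde m_n$. This still gives $\ex|P-\sum_kP_k|=|m-\widetilde m_n|$.
\item Drop the mention of the maximal coupling. The construction you actually use is the one with $V_{rjk}=\min(P_{rjk},1)$. That construction is also what makes your coupling compatible with the Lemma~\ref{L1} construction of $I_r(v)$ from the same indicators, so that all three processes sit on one probability space.
\end{itemize}
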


\begin{proof}
\adbr{First, couple $(I_r(v),\,r\ge0)$ to $(Z\un_r,\,r\ge0)$, as in Lemma~\ref{L1}.  Next,}
 let~$\l_n$ be chosen so that 
 \[
    e^{-\l_n} \Eq \Po(\l_n)\{0\} \Eq \Be(m/n)\{0\} \Eq (1-m/n);
 \]
note that then, for $n \ge 2m$, 
\eq\label{3.1}
     m/n \Le \l_n \Eq -\log(1-m/n) \Le (m/n)(1 + m/n).
\en
With this choice of~$\l_n$, the distribution $\Bi(n-1,m/n)$ is stochastically smaller than the Poisson distribution~$\Po(\tm_n)$
with mean~$\tm_n := (n-1)\l_n$.
Hence~$Z\un$, \adbr{whose offspring distribution is $\Bi(n-1,m/n)$,} 
can be coupled to a branching process~$\tZ\un$ with $\tZ\un_0 = 1$ and with offspring distribution~$\Po(\tm_n)$ in
such a way that $Z\un_r \le \tZ\un_r$ for each $r \ge 0$;  the argument of Lemma~\ref{L1} can be used in essence,
realizing independent pairs of offspring numbers coupled so that
those of~$Z\un$ are a.s.\ always less than or equal to those of~$\tZ\un$.  Hence it follows that, with this
coupling, $\ex|\tZ\un_r - Z\un_r| \le \tm_n^r - m_n^r$ for each $r \ge 0$.

Then the branching process~$\tZ\un$ can be coupled in
a similar monotone way (depending on which of $\tm_n$ and~$m$ is larger) to a branching process~$Z$ with $Z_0=1$ and with 
offspring distribution~$\Po(m)$, 
because the Poisson distribution~$\Po(\m)$ is stochastically increasing in~$\m$; with such a coupling, it follows that
$\ex|\tZ\un_r - Z_r| \le |\tm_n^r - m^r|$.  It thus follows, by the triangle inequality, that
\[
   \ex|Z\un_r - Z_r| \Le  2(\tm_n^r - m_n^r) + m^r - m_n^r.
\]
Since, by~\Ref{3.1}, for $n \ge m/(\sqrt m - 1)$, ensuring that $1 + m/n \le \sqrt m$, we have
\eq\label{3.69}
   0 \Le \tm_n^r - m_n^r \Le  m_n^r\{(1 + m/n)^r - 1\} \Le \frac{rm}n\,m^{3r/2},
\en
and since $m^r - m_n^r = m^r\{1 - (1 - 1/n)^r\} \le rm^r/n$,
it follows that
\eq\label{3.7}
  \ex|Z\un_r - Z_r| \Le  \frac{r}n\,\{2m^{1+3r/2} + m^r\}.
\en

As a result, by the triangle inequality, and using Lemma~\ref{L2}, it follows that, for $r \ge 1$,
\eqs
   \ex|Z_r - I_r(v)| &\le& \ex|Z\un_r - I_r(v)| + \ex|Z_r - Z\un_r| \\
       &\le&  (m/n)\bigl\{\h_0'(r,m) + r\{2m^{3r/2} + m^{r-1}\}\bigr\} \ =:\ (m/n)\h(r,m),
\ens
where $\h(r,m)$ is increasing in both $r$ and~$m$, and is bounded as in the statement of the lemma,
in view of~\adbr{\Ref{h0-dash-def}} and because
\eq\label{rmr-bnd}
    rm^{-r/2} \Le \frac2{e\log m} \Le \frac2e \Bmmi.
\en
\end{proof}

The previous lemmas quantify the approximation of~$I_r(v)$ by the population size~$Z_r$ at generation~$r$ of a
suitable branching process.   A similar argument can be used to approximate~$I_{r'}(v')$ for any $v' \ne v$, but
$I_r(v)$ and~$I_{r'}(v')$ are not independent; in particular, if $\cN_r(v)$ is given, and if $v' \in \cN_{r-1}(v)$,
the number of neighbours of~$v'$ is prescribed by knowing~$\cN_r(v)$.  Nevertheless, if $v' \notin \cN_r(v)$,
$I_{r'}(v')$ is typically close to a random variable $I_{r'}(v';v)$ that is conditionally independent of~$\cN_r(v)$,
given~$\cF_r(v)$,
and which can be coupled to the population size~$\tZ_{r'}$ at generation~$r'$ of a branching process 
with $\tZ_0=1$ and with offspring distribution~$\Po(m)$.  This is the substance of the following lemma.

\begin{lemma}\label{L4}
Conditional on~$\cF_r(v)$ and on $v' \notin \cN_r(v)$, let $\cI_{r'}(v';v)$ denote the set of vertices
at distance exactly~$r'$ from~$v'$ in the restriction~$G_v$ of the graph~$G$ to the vertices $[n] \setminus \cN_r(v)$.
Write $I_{r'}(v';v) := \card(\cI_{r'}(v';v))$ and $n' := n - N_r(v)$, and define $m' := mn'/n$.
Then,  if $n \ge m \max\{2,1/(\sqrt{m'} - 1)\}$,
the process $(I_i(v';v),\,i\ge0)$  can be constructed on the same probability space as a branching process
$(\tZ_i,\,i\ge0)$  that is independent of~$\cF_r(v)$, having $\tZ_0=1$ and with offspring distribution~$\Po(m)$, in such a way
that
\[
   \ex\{|\tZ_i - I_i(v';v)| \giv \cF_r(v)\} \Le (m/n)\hath(i,m;N_r(v)) \quad \mbox{for each}\ i \ge 1,
\]
where
\[
    \hath(i,m;N_r(v)) \Le  \h_0'(i,m) + 3im^{3i/2} + i m^{i-1} N_r(v) .
\]
\end{lemma}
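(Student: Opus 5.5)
Conditional on $\cF_r(v)$ and on $v'\notin\cN_r(v)$, the graph $G_v$ is, by edge independence, a Bernoulli random graph $\cG(n',m/n)=\cG(n',m'/n')$ on $n'=n-N_r(v)$ vertices. It is independent of $\cF_r(v)$: the edges among $[n]\setminus\cN_r(v)$ were not examined when $\cN_r(v)$ was constructed. The plan is therefore to rerun the argument of Lemmas~\ref{L1}--\ref{L3} inside $G_v$, with the Poisson target mean kept at $m$ rather than $m'$, and to absorb the discrepancy $m-m'=mN_r(v)/n$ into the extra term $im^{i-1}N_r(v)$.

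The first step applies Lemmas~\ref{L1} and~\ref{L2} to $\cG(n',m/n)$. This couples $(I_i(v';v))$ with a branching process $Z'$ with offspring distribution $\Bi(n'-1,m/n)$, such that $Z'_i\ge I_i(v';v)$ and
\[
\ex\{|Z'_i-I_i(v';v)|\giv\cF_r(v)\}\le (m/n)\h_0'(i,m).
\]
The proof of Lemma~\ref{L2} goes through verbatim with $n-N$ replaced by $n'-N$: the edge probability is still $m/n$, and all second-moment bounds use only that the offspring mean $(n'-1)m/n$ and variance are at most $m$. Monotonicity of $\h_0'$ in $m$ then gives the bound with $m$ in place of $m'$.

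The second step follows Lemma~\ref{L3}. Dominate $\Bi(n'-1,m/n)$ by $\Po((n'-1)\l_n)$, with $\l_n$ as in~\Ref{3.1}. Then couple monotonically to $\Po(m)$ offspring, giving a process $\tZ$ with $\tZ_0=1$. All the auxiliary randomness used is drawn independently of $\cF_r(v)$, so $\tZ$ is independent of $\cF_r(v)$. The errors are bounded as in Lemma~\ref{L3}, namely
\[
\ex|Z'_i-\tZ_i|\le 2\bigl(((n'-1)\l_n)^i-((n'-1)m/n)^i\bigr)+\bigl(m^i-((n'-1)m/n)^i\bigr).
\]
The first bracket is bounded as in~\Ref{3.69} by $(im/n)m^{3i/2}$; the hypothesis $n\ge m/(\sqrt{m'}-1)$ is exactly what is needed for $1+m/n\le\sqrt{m'}$ there. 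For the second bracket I would use
\[
m^i-(m(n'-1)/n)^i\le i m^{i-1}\cdot m(N_r(v)+1)/n.
\]
This yields $(m/n)\{i m^{i-1}N_r(v)+im^{i-1}\}$, and the part $im^{i-1}$ is absorbed into $3im^{3i/2}$, since $2im^{3i/2}+im^{i-1}\le 3im^{3i/2}$.

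Adding the two steps by the triangle inequality gives the stated form of $\hath$. The main point needing care is the conditional independence and fresh-randomness bookkeeping: the coupling must be built from variables independent of $\cF_r(v)$, so that $\tZ$ is genuinely independent of $\cF_r(v)$ while $I(v';v)$ is measurable with respect to the edges of $G_v$. I would handle this by constructing everything from a fresh array of indicators, as in Lemma~\ref{L1}, attached to $G_v$, together with independent uniforms for the Poisson couplings.
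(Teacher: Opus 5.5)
Your proof is correct and has the same three-part structure as the paper's proof. You apply Lemma~\ref{L2} to the conditionally independent graph $\cG(n',m/n)$, obtain a deficit term $(m/n)\,i m^{i-1}N_r(v)$, and use the Poisson coupling of Lemma~\ref{L3}, arriving at exactly the stated bound on $\hath$. The only difference is the intermediate process. The paper inserts a $\Bi(n-1,m/n)$ branching process $\hZ\un$ built from the same fresh indicators, so the $N_r(v)$ term comes from the binomial comparison~\Ref{4.2}; since $\hZ\un$ does not depend on $N_r(v)$, it and hence $\tZ$ are independent of $\cF_r(v)$ at once, and Lemma~\ref{L3}'s coupling is reused verbatim. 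You instead go directly from $\Bi(n'-1,m/n)$ to Poisson and pick up the deficit in $m^i-((n'-1)m/n)^i$. Your intermediate processes therefore depend on $N_r(v)$, so you need the (valid) observation that $\tZ$ has the $\Po(m)$ branching-process law conditionally on $\cF_r(v)$, and is therefore independent of it.
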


\begin{proof}
As in the proof of Lemma~\ref{L1}, let $(V'_{ijk},\,i\ge1,j\ge1,1\le k\le n-1)$ be independent indicator random 
variables with $\pr[V'_{ijk}=1] = m/n$ for all $i,j,k$, independent also of $(V_{ijk},\,i\ge1,j\ge1,1\le k\le n-1)$.
Then $U'_{ij} := \sum_{k=1}^{n-1} V'_{ijk}$, $i,j \ge 1$, are independent random variables with distribution~$\Bi(n-1,m/n)$.
Let~$\hZ\un$ be the branching process constructed recursively by taking $\hZ\un_0 = 1$ and, for $i \ge 1$, by taking
$\hZ\un_i := \sum_{j=1}^{\hZ\un_{i-1}} U'_{ij}$; this branching process has offspring distribution~$\Bi(n-1,m/n)$,
and is independent of the process~$Z\un$. 

Conditional on~$\cF_r(v)$,
for any given $v'\notin \cN_r(v)$, a copy of the process  $(\cI_i(v';v),\,i\ge0)$
can also be constructed recursively from the~$V'_{ijk}$.   Take $I_0(v';v) = 1$ and $\cI_0(v';v) = \{v'\}$, and then, for $i \ge 1$,
set $\tU'_{ij} :=  \sum_{k=1}^{n - N_{i-1}(v';v) - N_r(v)} V'_{ijk}$,  $1 \le j \le I_{i-1}(v';v)$,
where $\cN_l(v';v) := \bigcup_{s=0}^l \cI_{\adbr{s}}(v';v)$ and $N_l(v';v) := \card(\cN_l(v';v))$.  Then~$\tU'_{ij}$
represents the number of edges joining the $j$-th vertex in~$\cI_{i-1}(v';v)$ (with any prescribed ordering)
to vertices in $[n] \setminus (\cN_{r}(v) \cup \cN_{i-1}(v';v))$; these are then linked at random (without replacement)
to the vertices of $[n] \setminus (\cN_{r}(v) \cup \cN_{i-1}(v';v))$, to specify the edges between
the $j$-th vertex in~$\cI_{i-1}(v';v)$
and the set of vertices $[n] \setminus (\cN_{r}(v) \cup \cN_{i-1}(v';v))$.  Carrying out this procedure
defines the set~$\cI_i(v';v)$.  Note that $I_i(v';v) \le \sum_{j=1}^{I_{i-1}(v';v)} \tU'_{ij} \le \sum_{j=1}^{I_{i-1}(v';v)} U'_{ij}$,
and that
thus $I_i(v';v) \le \hZ\un_i$ if $I_{i-1}(v';v) \le \hZ\un_{i-1}$;  induction starting with $\hZ_0 = I_0(v';v) = 1$ thus shows that
$I_i(v';v) \le \hZ\un_i$ for all $i \ge 0$.
For use in what follows, \adbr{conditional on~$N_r(v)$,} we define another intermediate branching process~$\hZ\und$,  with 
$I_i(v';v) \le \hZ\und_i \le \hZ\un_i$ for all~$i \ge 0$, by setting $\hZ\und_0 = 1$ and
$\hZ\und_i := \sum_{j = 1}^{\hZ\und_{i-1}} \sum_{k=1}^{n - N_r(v)-1} V'_{ijk}$, $i \ge 1$, thus having
offspring distribution~$\Bi(n-N_r(v)-1,m/n)$.
The process~$\hZ\und$ is the analogue of the process~$Z\un$
of Lemma~\ref{L1}, but with~$n$ replaced by the smaller value $n' := n - N_r(v)$, and
thus depends on the value of~$N_r(v)$.

It remains to show first that $I_i(v';v)$ and~$\hZ\und_i$ are close for each~$i$, and then to show that $\hZ\und_i$
is close to~$\tZ_i$ for each~$i$.  For the first of these,
since $m'/n' = m/n$ gives the correct edge probability,
Lemma~\ref{L2} can be applied to $\cG(n',m'/n') = \cG(n',m/n)$ to show that
\eq\label{4.1}
        \ex\{|\hZ\und_i - I_i(v';v)| \giv \cF_r(v)\} \Le (m'/n')\h_0'(i,m') \Le (m/n)\h_0'(i,m),\qquad i \ge 1,
\en
with the last inequality because $m' \le m$. It is then immediate that, for $i \ge 1$,
\eq\label{4.2}
  \begin{split}
  \ex\{|\hZ\un_i - \hZ\und_i| \giv \cF_r(v)\} &\Eq m_n^i \Blb 1 - \Bl\frac{n'-1}{n-1}\Br^i\Brb \\
       &\Le m_n^{i-1} miN_r(v)/n \Le im^iN_r(v)/n.
  \end{split}
\en
Finally, as for~\Ref{3.7}, the process~$\hZ\un$ can be coupled to a branching process~$\tZ$, having $\tZ_0=1$ and 
with offspring distribution~$\Po(m)$, in such a way that
\eq\label{4.3}
   \ex|\tZ_i - \hZ\un_i| \Le (i/n)\{2m^{1+3i/2} + m^i\} \Le (m/n)3im^{3i/2},
\en
where we have used $n \ge m/(\sqrt{m'} - 1)\ge m/(\sqrt{m} - 1)$.
Combining \Ref{4.1}--\Ref{4.3} 
gives the statement of the lemma.
\end{proof}

\begin{remarkno}\label{B-optimality} {\rm
It also follows from the equality in~\Ref{4.2} that
\[
    \ex\{|\hZ\un_i - \hZ\und_i| \giv \cF_r(v)\} \ \ge\ \frac{im^iN_r(v)}{n} - \frac{i(i-1)N_r^2(v)}{n^2}\,.
\]
In the bounds that follow, for fixed~$u$, both $i$ and~$r$ are chosen to be comparable in magnitude to $\log n/(2\log m)$,
and $\ex\{N_r(v)\} \ge cn^{1/2}$ and $\ex\{N_r^2(v)\} \le c' n$, for appropriate constants~$c$ and~$c'$,
so that $\ex\{m^{-i}|\hZ\un_i - \hZ\und_i| \}$ exceeds a multiple of $n^{-1/2}\log n$.
The other elements contributing to the bound in Lemma~\ref{L4} result in contributions
to $\ex\{m^{-i}|\hZ\un_i - \hZ\und_i| \}$ of the smaller asymptotic order~$O(n^{-1/2})$.  These errors carry through to
the bound in Theorem~\ref{B-dist}.
Thus there is a
real discrepancy of magnitude $n^{-1/2}\log n$, that results from approximating $I_{i}(v';v)$ by the value~$\tZ_i$ from the
\BGW\ process~$\tZ$ with offspring distribution~$\Po(m)$, rather than by the value~$\tZ'_i$ from a \BGW\ process~$\tZ'$ with
offspring distribution~$\Po(m')$, where $m' := m(1 - N_r(v)/n)$, which would be the approximation appropriate for the Bernoulli
random graph $\cG(n',m/n)$. Indeed, \adbr{conditional on~$N_r(v)$,}
the branching processes $\tZ$ and~$\tZ'$ can be simply coupled in such a way that $\tZ_i \ge \tZ'_i$
for all $i \ge 0$.  In view of the expression \Ref{B-Prob} for the conditional probability~$P$
that $D(v,v')$ exceeds a given value, this introduces a consistent underestimate of~$P$, which is then carried
into the approximating distribution function given in Theorem~\ref{B-dist}.  As a result, the error bound
in Theorem~\ref{B-dist} can be expected to be of the correct asymptotic order $O(n^{-1/2}\log n)$ as $\nti$.
\hfill$\Box$
}
\end{remarkno}

With  $\cN_j(v';v)$ as defined above,
the event that the graph distance $D(v;v')$ between $v$ and~$v' \notin \cN_r(v)$ is strictly greater than $r+r'+1$ is the
same as the event
that $\cI_r(v)$ and~$\cN_{r'}(v';v)$ have no edges between them.  By the independence of the edge indicators in the
Bernoulli graph, the conditional probability of this event, given $\cI_r(v)$ and~$\cN_{r'}(v';v)$, is just
\eq\label{B-Prob}
   \begin{split}
    P(&I_r(v), N_{r'}(v';v))\\[1ex]
      &\Def \pr\bigl[\mbox{there are no edges between $\cI_r(v)$ and~$\cN_{r'}(v';v)$}\ \giv
       I_r(v), N_{r'}(v';v)\bigr] \\
    &\,\Eq \Bl 1 - \frac mn \Br^{I_r(v) N_{r'}(v';v)}.
   \end{split}
\en
If $0 \le r'-r \le 1$, this probability is \adbr{expected to be}
well away from both $0$ and~$1$ for values of~$r$  such that~$\ex\{I_r(v)\}$ is
of magnitude comparable to~$\sqrt n$.  Defining~$r_n$ as in~\Ref{B-rn-def},
\ignore{
$:= \lfloor \half \log n/\log m \rfloor$, so that
\[
     m^{r_n} \Le \sqrt n \Eq \chi_n m^{r_n}, \qquad \mbox{where}\quad 1 \le \chi_n < m,
\]
}
this indicates choices of~$r$ of the form $r_n + t$, for~$t$ such that~$|t|$ is not too large.
Theorem~\ref{B-dist} is based on approximating the expectation of the probability~\Ref{B-Prob} in terms of branching process
quantities, using Lemmas \ref{L3} and~\ref{L4}.

\bigskip
\noindent{\bf Proof of Theorem~\ref{B-dist}}\ As well as requiring $u > -2r_n$,
we may assume that \adbr{$u \le \frac{\log n}{2\log m}$}, since otherwise the bound given in
Theorem~\ref{B-dist} is clearly satisfied.

As a prelude to the proof, we establish some useful facts.
For $Z$ and~$\tZ$ independent copies of the branching process with initial value~$1$ and with offspring distribution~$\Po(m)$,
we write
\[
    W_r \Def m^{-r}Z_r, \qquad \tW_r \Def m^{-r}\tZ_r \quad \mbox{and}\quad \tW^+_r \Def m^{-r}\sum_{i=0}^r \tZ_i.
\]
We note that $(W_r,\,r\ge0)$ and $(\tW_r,\,r\ge0)$ are non-negative martingales, and that,
a.s.\ as $r \to \infty$, $W_r \to W$, $\tW_r \to \tW$ and $\tW^+_r \to m\tW/(m-1)$, where $W$ and~$\tW$ are
independent copies of a random variable with expectation~$1$. 
Direct calculations, based, \adbr{as for \Ref{variance-bnd} and~\Ref{IsIr-bnd}, on Harris~(1963), Chapter~1, Theorem~5.1}, 
show that
\eq\label{W-moms-r}
   \ex\{W_r^2\} \Le \Bmmi \quad\mbox{and that}\quad \ex\{(\tW^+_r)^2\} \Le 3\Bmmi^3,
\en
and then, \adbb{as in Harris~(1963), Chapter~1, Theorem~8.1,} that 
\eq\label{W-moms-1}
   \ex\{(W_r - W)^2\} \Eq \frac{m^{-r}}{m-1}\,; \quad \ex\{W^2\} \Eq \Bmmi; \quad \ex\{(\tW^+)^2\} \Le 3\Bmmi^3.
\en
\ignore{
Furthermore, because, for $i < j$, 
\eqs
     \ex\{(W_i-W)(W_j-W)\} &=& \ex\bigl\{(W_i-W_j)\ex\{W_j-W \giv \cF^Z_j\}\bigr\} + \ex\{(W_j-W)^2\}  \\
                      &=& \ex\{(W_j-W)^2\} \Le \frac{m^{-j}}{m-1}\,,
\ens
it follows that
\eqa
 \ex\Bigl\{\Bigl(\tW_r^+ - \sum_{i=0}^r m^{-(r-i)}\tW\Bigr)^2\Bigr\} 
           &=& \ex\Bigl\{\Bigl(\sum_{i=0}^r m^{-(r-i)}(\tW_i - \tW)\Bigr)^2\Bigr\} \non\\
     &\le& \frac1{m-1}\Bigl\{2\sum_{i=0}^r \sum_{j=i+1}^r m^{-2r+i} +  \sum_{i=0}^r m^{-2r+i}  \Bigr\} \non \\
     &\le& m^{-r}\,\frac{m(m+1)}{(m-1)^2}\,. \label{W-moms-2}
\ena
}%
Note also that
\eq\label{W-moms-3}
   \ex\biggl|\sum_{i=0}^r m^{-(r-i)}\tW - \frac{m\tW}{m-1}\biggr|
          \Eq \sum_{l \ge 1} m^{-l-r} \Eq \frac{m^{-r}}{m-1}\,.
\en

Now, to approximate the \adbr{right hand side of}~\Ref{B-Prob}, with $r + r' + 1 = 2r_n + u$,
write $t := \lfloor (u-1)/2 \rfloor$ and define
\eq\label{B-Rss-def}
  t' \Def u - 1 - t; \quad r \Def r_n+t; \quad s \Def r_n + t'.
\en
 Supposing that $V' \notin \cN_r(V)$, write $\D_r := I_r(V) - Z_r$ and $\tD^+_s := N_{s}(V';V) - \tZ^+_{s}$, where
$Z$ and~$\tZ$ are the coupled processes of Lemmas \ref{L3} and~\ref{L4}, and $\tZ^+_{s} := \sum_{i=0}^{s} \tZ_i$.
Using the inequalities
\eq\label{inequalities}
   |(1-x)^a - (1-x)^b| \Le x|a-b| \quad\mbox{and}\quad |(1-x)^a - e^{-xa}| \Le 2x \quad\mbox{for all}\ 0 < x \le 1;\ a,b \ge 0,
\en
it follows that
\eqs
   \lefteqn{\Bigl|P(I_r(V), N_{s}(V';V)) - \exp\{-m{Z_r \tZ^+_s}/n\}\Bigr|}\\
   &&\Eq \Bigl|(1 - m/n)^{(Z_r + \D_r)(\tZ^+_{s} + \tD^+_{s})} - \exp\bigl\{-m{Z_r \tZ^+_s}/n\bigr\}\Bigr| \\
   &&\Le \frac mn \,\bigl(|\D_r|N_{s}(V';V) + Z_r|\tD^+_{s}|\bigr) + \frac{2m}n\,. 
\ens
Thus, since, as for Lemma~\ref{L1}, $\ex I_{i}(V';V) \le (m')^i \le m^i$ for all $i \ge 0$, and using Lemma~\ref{L4},
it follows that
\eqs
  \lefteqn{ \ex\Blb \Bigl|P(I_r(V), N_{s}(V';V)) - \exp\bigl\{-mZ_r \tZ^+_{s}/n\bigr\}\Bigr| \Giv V,V',\cF_r(V) \Brb
              I[V' \notin \cN_r]}\\
      &&\Le \frac mn \,\Bigl(|\D_r|\,\frac{m^{s+1}}{m-1} + \frac{Z_r} n\,\sum_{j=1}^{s}\{m(\h_0'(j,m) + 3jm^{3j/2})
                 + j m^j N_r(v)\}\Bigr)   + \frac{2m}n\,, 
\ens  
with~$\h_0'(s,m)$ as defined in~\Ref{h0-dash-def}.   Taking expectations and using Lemma~\ref{L3},
and noting also that
\eq\label{ADB-early-intersection}
   \pr[V' \in \cN_{r}] \Eq n^{-1}\ex N_{r} \Le n^{-1}\h_1(r,m) \Le n^{-1}\,\frac{m^{r_n+t+1}}{m-1}
      \Eq \frac{m^t}{\chi_n\sqrt n}\,\Bmmi,
\en
where~\Ref{B-chin-def} is used in the final equality, it follows that
\eqs
   \lefteqn{\Bigl| \pr[D(V;V') > 2r_n+u]
        - \ex\Bl \exp\Blb -(m/\chi_n^2)m^t W_{r}\,m^{t'} \tW^+_{s}\Brb \Br \Bigr|} \\
      &&\Le \frac{m}{\chi_n \sqrt n}\,\Bl\frac mn\, \h(r,m)\, \frac{m^{t'+1}}{m-1} \right.\\
      &&\qquad\qquad\qquad\quad\left. \mbox{}  + \frac mn\, m^{t} \sum_{j=1}^{s} \Blb \h_0'(j,m) + 3jm^{3j/2}
       + j m^{j-1}\,m^{r}\,\Bigl( \frac{m}{m-1}\Bigr)^2\Brb  \Br \\
       &&\qquad\mbox{}      + \frac{2m}n  + \frac{m^{t}}{\chi_n\sqrt n}\,\Bmmi ,  
\ens
where we have used Cauchy--Schwarz, Lemma~\ref{L1} and~\Ref{variance-bnd} to give
\[
   \ex\{Z_r N_r(v)\} \Le \sum_{i=0}^r \sqrt{\ex\{Z_r^2\} \ex\{(Z\un_i)^2\}} \Le m^{2r}\,\Bmmi^2,
\]
and where the last term follows from~\Ref{ADB-early-intersection}.
Crude estimates, using~\Ref{h0-dash-def} to bound~$\h_0'(j,m)$, \Ref{eta-bnd} to bound~$\h(r,m)$, and~\Ref{rmr-bnd}
in bounding $jm^{3j/2}$, as well as $\chi_n \ge 1$, $t = \lfloor (u-1)/2 \rfloor$, $t + t' = u - 1$, \adbb{$u \le
\log n/(2\log m)$ and $1/\log m \le m/(m-1)$,}
together yield
\eqa
  \lefteqn{
   \Bigl| \pr[D(V;V') > 2r_n+u]
        - \ex\Bl \exp\Blb -(m/\chi_n^2)m^{u-1} W_{r} \tW^+_{s}\Brb \Br \Bigr|} \non\\
          &&\qquad\qquad\Le c(m) m^{u/2}\max\{1,m^u\}\,  n^{-1/2}\log n,\phantom{XXXXX} \label{B-1st-approx}
\ena
with
\eq\label{c-bnd}
   c(m) \Def 2m(1+9m^{5/2})\Bmmi^4,
\en
for all~$u \ge -2r_n$. 

The approximation using $W_r$ and~$\tW^+_{s}$ is not as neat as that in which they are replaced by their limits
$W$ and~$m\tW/(m-1)$.
We thus still need to bound the expectation of the difference
\eq\label{B-Dnu-def}
  \D(n,u) \Def \exp\Blb -(m/\chi_n^2)m^{u-1} W_{r} \tW^+_{s}\Brb
                    - \exp\Bigl\{ - m^{u+1} \frac{ W \tW}{(m-1)\chi_n^2} \Bigr\}.
\en
\ignore{
However, doing so introduces an error that is typically of larger magnitude.  Since
\eqs
    \lefteqn{\ex\biggl|\exp\Blb -(m/\chi_n^2)m^s W_{r_n+t} \tW^+_{r_n+t'}\Brb
                    - \exp\Bigl\{ - m^{s+2} \frac{ W \tW}{(m-1)\chi_n^2} \Bigr\} \biggr|} \\
    &&\Le m^{s+1}\ex\bigl\{W_{r_n+t}|\tW^+_{r_n+t'} - m\tW/(m-1)| + |W_{r_n+t} - W|m\tW/(m-1)\bigr\} \\
    &&\Le m^{s+1}\Bigl\{\frac{m^{-(r_n+t'-1)/2}}{(\sqrt m - 1)\sqrt{m-1}} + \frac{m^{-(r_n+t')}}{m-1}
            + m^{-(r_n+t)/2}{\sqrt{m-1}}\Bmmi \Bigr\},
\ens
the extra error can be bounded by $c'(m) m^{s/4}\max\{m^{s/2},1\}n^{-1/4}$, where, after some calculation,
we can take
\eq\label{c-dash-bnd}
  c'(m) \Def 4m^{5/4}\Bmmi^{3/2}.
\en
}
Writing $w_n(u) := m^u/\chi_n^2$, and setting
\eq\label{Xl-def}
     X_l \Def \tW\sum_{i=0}^{l-1} m^{-i} + \sum_{i=l}^{s} m^{-i}\tW_{s-i}, \quad 0 \le l \le s+1,
\en
where, here and subsequently, empty sums are taken to be zero, so that
 $X_0 = \tW^+_{s}$ and $X_{s+1} = \tW \sum_{i=0}^{s}m^{-i}$, \adbr{we note} that
\eqa
  \D(n,u) &=& \exp\{ -w_n(u) W_{r} \tW^+_{s}\} - \exp\{ -w_n(u) W \tW^+_{s}\} \non \\
    &&\quad\mbox{} + \sum_{l=0}^{s} \bigl( \exp\{ -w_n(u) W X_l\} - \exp\{ -w_n(u) W X_{l+1}\}\bigr) \non\\
     &&\qquad\mbox{}          +  \exp\{ -w_n(u) W X_{s+1}\} - \exp\{ -w_n(u) W m\tW/(m-1)\}.
            \phantom{XXXX}\label{B-Dns-split}
\ena
We now use the inequality
\eq\label{ineq-1}
    |e^{-x} - e^{-x'} - e^{-y}(x' - x)| \Le \tfrac12 (x'-x)^2  + |x'-x|\,|y-x|,
\en
valid for all $x,x',y \ge 0$, to bound the various terms in~\Ref{B-Dns-split}.  Let $\cF^Z_r$ and $\cF^{\tZ}_r$
denote the $\s$-fields generated by $(Z_l,\,0\le l\le r)$ and $(\tZ_l,\,0\le l\le r)$, respectively, and recall that the
processes $Z$ and~$\tZ$ are independent.  We first  take $x = y = w_n(u)W_{r}\tW^+_{s}$ and $x' = w_n(u)W\tW^+_{s}$
in~\Ref{B-Dns-split}, to give
\eqa
  \lefteqn{ \bigl|\ex\bigl\{\exp\{ -w_n(u) W_{r} \tW^+_{s}\}\bigr\}
              - \ex\bigl\{\exp\{ -w_n(u) W \tW^+_{s}\}\bigr\}\bigr| } \non\\
    &&\Le |\ex\bigl\{\exp\{-w_n(u)\tW^+_{s}W_{r}\}(W - W_{r})\adbr{w_n(u)\tW^+_{s}}\}| 
         + \frac{w_n(u)^2}2\,\ex\bigl\{(W - W_{r})^2\bigr\}
                          \ex\{(\tW^+_{s})^2\} \non \\
    &&\Eq  \frac{w_n(u)^2}2\,\ex\bigl\{(W - W_{r})^2\bigr\} \ex\{(\tW^+_{s})^2\}, \label{B-diff-1}
\ena
since $\ex\{W - W_{r} \giv \cF^Z_{r}\vee \cF^{\tZ}_{s}\} = 0$ a.s.
Invoking \Ref{W-moms-r} and~\Ref{W-moms-1}, it thus follows that
\eq\label{ZW-1}
  \bigl|\ex\bigl\{\exp\{ -w_n(u) W_{r} \tW^+_{s}\}\bigr\}
              - \ex\bigl\{\exp\{ -w_n(u) W \tW^+_{s}\}\bigr\}\bigr| \Le c'_1(m)  m^{3u/2} n^{-1/2}.
\en

Next, defining
\[
     V_l \Def \tW_{s-l}\sum_{i=0}^{l-1} m^{-i} + \sum_{i=l}^{s} m^{-i}\tW_{s-i}, \quad 0 \le l \le s+1,
\]
we take $x = w_n(u)X_l W$, $z = w_n(u)V_l W$ and $x' = w_n(u)X_{l+1} W$  in~\Ref{B-Dns-split}, giving
\eqa
 \lefteqn{ \bigl|\ex\bigl\{\exp\{ -w_n(u) W X_l\}\bigr\} - \ex\bigl\{\exp\{ -w_n(u) W X_{l+1}\}\bigr\}\bigr|} \non\\
   &&\Le \bigl|\ex\bigl\{\exp\{-w_n(u) W V_l\}(X_l - X_{l+1})w_n(u)W\bigr\}\bigr| \non\\
   &&\quad\mbox{}    + \frac12 \ex\bigl\{\bigl((X_l - X_{l+1})^2 + 2|X_l - X_{l+1}|\,|V_l-X_l|\bigr) (w_n(u) W)^2\bigr\}.
      \label{B-diff-2}
\ena
Since, \adbr{from~\Ref{Xl-def},} $X_l-X_{l+1} = m^{-l}(\tW_{s-l} - \tW)$ and since~$V_l$ is $\cF^{\tZ}_{s-l}$-measurable,
and by the independence of $Z$ and~$\tZ$, it 
follows that $\ex\bigl\{\exp\{-w_n(u) W V_l\}(X_l - X_{l+1})w_n(u)W\bigr\} = 0$.  Then, from~\Ref{W-moms-1},
\[
    \ex\{(X_l - X_{l+1})^2\adbr{(w_n(u) W)^2}\} \Le \frac{m^{-l -s}}{m-1}\,w_n(u)^2 \Bmmi,
\]
and, since $V_l - X_l = (\tW_{s-l} - \tW)\sum_{i=0}^{l-1} m^{-i}$, it also follows from~\Ref{W-moms-1} that
\[
   \ex\{(V_l-X_l)^2\adbr{(w_n(u) W)^2}\} \Le \Bmmi^3 \frac{m^{-s+l}}{m-1}\,w_n(u)^2.
\]
Together, these bounds imply that
\[
    \ex\{|X_l - X_{l+1}|\,|V_l-X_l|\adbr{(w_n(u) W)^2}\} \Le \frac1{m-1}\,\adbb{\Bmmi^2} m^{-s} w_n(u)^2.
\]
As a result, we deduce that, for $0 \le l \le s$,
\eq\label{ZW-2}
  \bigl|\ex\bigl\{\exp\{ -w_n(u) W X_l\}\bigr\} - \ex\bigl\{\exp\{ -w_n(u) W X_{l+1}\}\bigr\}| \Le c'_2(m)n^{-1/2} m^{3u/2};
\en
there are \adbb{$s \le (3/4)\log n/\log m$} such contributions to the approximation error,
arising from the expression for~$\D(n,u)$ given in~\Ref{B-Dns-split}.

Finally, from~\Ref{W-moms-3}, we have
\eqs
 \lefteqn{\bigl|\ex\bigl\{\exp\{-w_n(u) W X_{s+1}\}\bigr\} - \ex\bigl\{\exp\{ -w_n(u) W m\tW/(m-1)\}\bigr\}\bigr|} \\
    &&\Le w_n(u) \frac{m^{-s}}{m-1} \Le c'_3(m) n^{-1/2}m^{u/2}. \phantom{XXXXXXXXXXXXXX}
\ens
Combining this with \Ref{B-Dns-split}, \Ref{ZW-1} and~\Ref{ZW-2} gives
\[
   \ex|\D(n,u)| \Le c'(m) m^{u/2}\max\{1,m^u\}n^{-1/2}\log n,
\]
which, with \Ref{B-1st-approx} and~\Ref{B-Dnu-def}, completes the proof of the theorem.
Note that $c(m)$ and~$c'(m)$ are uniformly
bounded in any interval of the form $1+\d \le m \le \D$, for $\d,\D > 0$;
$c(m)$ is as given in~\Ref{c-bnd}, and calculation shows that we could take $c'(m) = 5m^2\adbb{\Bmmi^4}$
in $n \ge 10$. \hfill$\blacksquare$

\section{The Bernoulli digraph}\label{s-digraph}
\ignore{
In the Bernoulli random digraph $\DG(n,m/n,\th)$, the distance between two vertices $v$ and~$v'$ can be computed along
paths that are directed from $v$ to~$v'$, along paths that are directed from $v'$ to~$v$, and indeed along paths
in which the directions of the edges are neglected.  In the latter case, the problem reduces to considering
distances in the model $\cG(n,m/n)$, and has already been covered, for~$m$ fixed, in Theorem~\ref{B-dist}.  Considering paths directed
only from $v$ to~$v'$, note that, when $v'$ does not belong to the out-neighbourhood $\cN\oplus_r(v)$ of~$v$,
the corresponding distance $D(v\to v')$ is greater than $2r+s$ exactly when the out-neighbourhood ring
$\cI_r\so(v)$ has no intersection with the in-neighbourhood $\cN\iplus_{r+s}(v';v)$, constructed within the digraph~$G$
restricted to the vertices $[n] \setminus \cN\oplus_r(v)$.  Thus the distribution of~$D(v\to v')$ can be treated
exactly as for the Bernoulli random graph $\cG(n,\mm/n)$, where $\mm := m(1+\th)/2$, and can again be approximated
using Theorem~\ref{B-dist}.  However, given two vertices $v$ and~$v'$, one can also consider the {\it joint\/} distribution
of $D(v \to v')$ and~$D(v' \to v)$, and not just their marginals.  The distributions of these distances are dependent,
because the in- and out-neighbourhoods of a vertex have points in common --- in particular, $\cN\so_r(v)$ and~$\cN\si_r(v)$
both contain the set~$\cN\sb_r(v)$, consisting of all points that can be reached from~$v$ by paths of length at most~$r$
made up only of bi-directional edges.
Despite this added complication, the strategy of proof used for the Bernoulli random graph is broadly effective here, too.
}

As observed in the introduction, the distances $D(v;v')$ and $D(v';v)$ between two vertices $v$ and~$v'$ 
in the Bernoulli random digraph $\DG(n,m/n,\th)$ are in general dependent random variables.
One reason why is that the in- and out-neighbourhoods $\cN\si_r(v)$ and~$\cN\so_r(v)$ both contain the neighbourhood
$\cN\sb_r(v)$, that consists of all vertices~$v'$ for which there is a path of length at most~$r$ from $v'$ to~$v$
that is made up entirely of bidirectional edges.  Thus it makes sense to distinguish this neighbourhood, and
to consider it alongside the neighbourhoods
 $\tcN\so_r(v) := \cN\so_r(v) \setminus \cN\sb_r(v)$ and $\tcN\si_r(v) := \cN\si_r(v) \setminus \cN\sb_r(v)$.
In particular, a branching approximation to the sizes of the triples of neighbourhood rings  
 $\tcI\so_r(v) := \tcN\so_r(v) \setminus \tcN\so_{r-1}(v)$, $\tcI\sb_r(v) := \tcN\sb_r(v) \setminus \tcN\sb_{r-1}(v)$
and $\cI\sb_r(v) := \cN\sb_r(v) \setminus \cN\sb_{r-1}(v)$ becomes appropriate.
We denote the sizes of the neighbourhoods and neighbourhood rings by
$\tN\so_r(v) := \card(\tcN\so_r(v))$, $\tN\si_r(v) := \card(\tcN\si_r(v))$, $\tI\so_r(v) = \card(\tcI\so_r(v))$,
$\tI\si_r(v) = \card(\tcI\si_r(v))$ and $I\sb_r(v) := \card(\cI\sb_r(v))$; we also
define $\cN_r(v) := \cN\so_r(v) \cup \cN\si_r(v) \cup \cN\sb_r(v)$ and $N_r(v) := \card(\cN_r(v))$.
Initially, we have $\cN\sb_0(v) = \tcI\sb_0(v) = \{v\}$
and $\tcI\so_0(v) = \tcI\si_0(v) = \emptyset$, so that $(\tI\so_0(v),\tI\si_0(v),I\sb_0(v)) = (0,0,1)$.  

Mimicking the argument for $\cG(n,m/n)$,
the first step would be to find a $3$-type branching process that bounds the numbers $(\tI\so_r(v),\tI\si_r(v),I\sb_r(v))$ 
from above.  However, the neighbourhood rings $\tcI_r\so(v)$ and~$\tcI_r\si(v)$ may have non-empty intersection.
In particular, a vertex in~$\tcI_{r-1}\so(v)$ may be linked by an out-edge to a vertex~$v' \in \tcI_{r-1}\si(v)$,
\adbr{contributing} one to each of $\tI_r\so(v)$ and~$\tI_r\si(v)$, after which the development of $\tcI_s\so(v)$, 
$s > r$, would be
linked to that of the neighbourhoods~$\tcI_{s'}\si(v)$, $s' < r$, rendering a neat branching approximation infeasible.
A similar problem arises if a vertex in~$\tcI_{r-1}\so(v)$ is linked by an out-edge to a vertex~$v' \in [n]\setminus \cN_{r-1}(v)$,
and if a vertex in~$\tcI_{r-1}\si(v)$ is linked by an in-edge to the same vertex~$v'$.
\ignore{
Each out-edge from such a~$v'$ to an as yet unexamined vertex would require both an $o$-child and an $i$-child
as offspring in the bounding branching process, introducing a further type of individual into the 
bounding branching process.
Since there is no natural definition within a branching process that would correspond to 
the offspring of two different individuals being the same, such considerations lead to difficulties.
However, such a possibility would have to be
allowed for in the bounding branching process, since
a bi-directional edge from $v' \in \tcN_r\so(v) \cap \tcN_r\si(v)$ to $[n] \setminus(\cN_r\so(v) \cup \cN_r\si(v))$
gives rise to a new element in both $\cN_r\so(v)$ and~$\cN_r\si(v)$, such a vertex~$v'$ would give rise to
an offspring distribution in the branching process that was different from those of individuals corresponding to vertices 
in $\cN_r\so(v)$ and~$\cN_r\si(v)$, introducing
a new type into the branching process. 
}

Instead, 
we consider another triple $(\hI\so_r(v),\hI\si_r(v),I\sb_r(v))$. 
We define
$\hI\so_r(v) := \card(\hcI\so_r(v))$, where $\hcI\so_r(v) \subset \tcI\so_r(v)$ consists of those vertices 
in~$[n]\setminus \cN_{r-1}(v)$ that are reached by an out- or a bidirectional edge from a vertex in~$\hcI\so_{r-1}(v)$,
or by an out-edge from a vertex in~$\cI\sb_{r-1}(v)$,
but are not reached by any other edge, whatever its direction, from $\cN_{r-1}(v)$. 
The quantities $\hcI\si_r(v)$ and~$\hI\si_r(v)$ are defined analogously.  
\adbr{Note that thus the sets $\hcI\so_r(v)$, $\hcI\si_r(v)$ and~$\cI\sb_{r}(v)$ are disjoint.}
We find a $3$-type branching process to dominate the triples $\bigl((\hI\so_r(v),\hI\si_r(v),I\sb_r(v)),\, r\ge1\bigr)$, and then
show in Lemma~\ref{DGL-new} that these triples are not too far from the triples
$\bigl((\tI\so_r(v),\tI\si_r(v),I\sb_r(v)),\, r \ge 1\bigr)$.

Write 
\eq\label{mo-def}
   m_0 \Def \half m(1+\th) \quad\mbox{and}\quad m_1 \Def \half m(1-\th).
\en
Then a vertex~$v' \in [n]$ has numbers of 
immediate neighbours $(\tI\so_1(v'),\tI\si_1(v'),I\sb_1(v'))$ distributed according to the multinomial distribution
$\MN_0(n-1;m_1/n,m_1/n,m\th/n)$ that puts the remaining probability $1 - m/n$ at each of the $n-1$ trials on the outcome $(0,0,0)$.
Conditionally on~$\cN_r(v)$,  any vertex $v' \in \hcI\so_r(v)$ has number of out-neighbours 
$\hI\sos_{r+1}(v,v')$ in 
$[n] \setminus \cN_r(v)$ distributed as $\Bi(n - N_r(v),m_0/n)$, and analogously for~$v' \in \hcI\si_r(v)$,
with $v' \in \cI\sb_r(v)$ having neighbours of the three types in $[n] \setminus \cN_r(v)$ distributed as
$\MN_0(n - N_r(v);m_1/n,m_1/n,m\th/n)$.  Note that
$\hI\so_{r+1}(v) \le \sum_{v' \in \hcI\so_r(v)}\hI\sos_{r+1}(v,v')$, sometimes without equality, because
some of the vertices in~$\hcI\sos_{r+1}(v,v')$ may also be linked to vertices in~$\cN_r(v)$ other than~$v'$.
These considerations motivate a joint construction along the lines of Lemma~\ref{L1}.

\begin{lemma}\label{DGL1}
 It is possible to construct, on the same probability space, copies of the process
 $\hI(v) := \bigl((\hI\so_r(v),\hI\si_r(v),I\sb_r(v)),\,r\ge0\bigr)$ and of a
$3$-type branching process $Z\un := \bigl((Z\un_{1,r},Z\un_{2,r},Z\un_{3,r}),\,r\ge0\bigr)$, 
with $Z\un_0 = (0,0,1)$ and with offspring distributions
\eqs
   &&(\Bi(n - 1,m_0/n),0,0) \quad \mbox{for type}\ 1\ \mbox{individuals};\\
   &&(0,\Bi(n - 1,m_0/n),0)  \quad \mbox{for type}\ 2\ \mbox{individuals}; \\
   &&\MN_0(n-1;m_1/n,m_1/n,m\th/n) \quad \mbox{for type}\ 3\ \mbox{individuals},
\ens
in such a way that $Z\un_r \ge \hI_r(v)$ (componentwise) almost surely for all $r \ge 0$.
\end{lemma}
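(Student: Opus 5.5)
We mimic the proof of Lemma~\ref{L1}, replacing the single array of Bernoulli indicators by an array of independent ``directional'' trial outcomes. Let $(\beps_{ijk},\,i,j,k\ge1)$ be independent random vectors in $\{(0,0,0),(1,0,0),(0,1,0),(0,0,1)\}$. Each takes the values $(1,0,0)$ (an out-edge from the parent), $(0,1,0)$ (an in-edge) and $(0,0,1)$ (a bidirectional edge) with probabilities $m_1/n$, $m_1/n$ and $m\th/n$ respectively, and $(0,0,0)$ otherwise. For individuals of type~$1$ we use a separate coordinate from the same array: the indicator $\beps_{ijk,1}+\beps_{ijk,3}$, which is Bernoulli with success probability $(m_1+m\th)/n=m_0/n$. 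Type~$2$ uses $\beps_{ijk,2}+\beps_{ijk,3}$ analogously. Thus, with the individuals of generation~$r-1$ listed in some fixed order, the $j$-th of them gets an offspring vector $U_{rj}:=\sum_{k=1}^{n-1}\phi_{\tau}(\beps_{rjk})$. Here $\tau$ is its type, and $\phi_1(e)=(e_1+e_3,0,0)$, $\phi_2(e)=(0,e_2+e_3,0)$, $\phi_3(e)=e$. Setting $Z\un_r=\sum_j U_{rj}$ gives the required $3$-type branching process, because distinct $(r,j)$ use disjoint independent blocks of the array.

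The neighbourhood process is built from the same array, by induction on~$r$. We assume a type-preserving injection is given from $\hcI\so_{r-1}(v)\cup\hcI\si_{r-1}(v)\cup\cI\sb_{r-1}(v)$ into the individuals of generation $r-1$ of~$Z\un$, mapping $\hcI\so$-vertices to type~$1$, $\hcI\si$-vertices to type~$2$ and $\cI\sb$-vertices to type~$3$. For the vertex matched with individual~$j$, we use the trials $\beps_{rjk}$, $1\le k\le n-N_{r-1}(v)$, to decide the edges from that vertex to the vertices of $[n]\setminus\cN_{r-1}(v)$. We attach them to uniformly chosen distinct vertices there, independently of everything else, as in Lemma~\ref{L1}. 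Since the labelling is uniform and the trials are i.i.d., this reproduces the correct conditional law of the edges (with their directions) from $\cN_{r-1}(v)$ into the unexplored set.

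One point needs care. For a vertex of $\hcI\so_{r-1}(v)$ only out- and bidirectional edges matter for $\hcI\so_r$. Yet in-edges from it can still disqualify a target, by the clause ``not reached by any other edge, whatever its direction, from $\cN_{r-1}(v)$''. So the construction must decide all three edge types for every vertex, including vertices of $\cN_{r-1}(v)$ that are not currently matched (e.g.\ vertices in $\tcI\so_{r-1}\setminus\hcI\so_{r-1}$). Edges from those vertices are generated using an auxiliary independent array; they can only remove vertices from the $\hat{}$-sets, never add to them.

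Then $\hcI\so_r(v)$ consists of target vertices hit by an out- or bidirectional trial from a matched type-$1$ vertex, or by an out-trial from a matched type-$3$ vertex, and by nothing else. Each such vertex corresponds to at least one type-$1$ offspring counted in~$Z\un_{1,r}$; we match it to one of them, say the first. The same holds for $\hcI\si_r(v)$ and type~$2$. For $\cI\sb_r(v)$ we need a bidirectional trial from a type-$3$ vertex, each of which is a type-$3$ offspring. The chosen offspring are distinct because the three target sets are disjoint, and a vertex is reached by distinct $(j,k)$ pairs. Using $n-N_{r-1}(v)\le n-1$ trials instead of $n-1$ only lowers counts. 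This yields a type-preserving injection at generation~$r$, so $\hI_r(v)\le Z\un_r$ componentwise.

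The induction starts with $(0,0,1)$ for both processes, which completes the argument. The main obstacle is the bookkeeping showing that the map is injective and type-preserving, i.e.\ that every counted vertex in each hat-set has its own offspring of the correct type; the distributional correctness of the marginal process is routine.
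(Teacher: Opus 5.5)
Your proposal is correct and follows essentially the same route as the paper. The graph exploration and the branching process are driven by shared trials, the graph uses only the first $n-N_{r-1}(v)\le n-1$ trials of each vertex, vertices reached by more than one edge are discarded, and induction on $r$ then gives $Z\un_r\ge\hI_r(v)$ componentwise. The paper uses three separate arrays $V^1_{ijk}$, $V^2_{ijk}$ and $\bV^3_{ijk}$; your single multinomial array with type-dependent projections $\phi_\tau$, plus the auxiliary array for unmatched vertices, also supplies the oppositely directed edges that the disqualification rule needs, with the correct joint law, which the paper's proof leaves implicit.
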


\begin{proof}
Much as for Lemma~\ref{L1},
let $(V^l_{ijk},\,i,j,k \ge 1,\, l=1,2)$ be independent indicator random variables with
$\pr[V^l_{ijk}=1] = m_0/n$ for all $i,j,k,l$; write $\bV^l_{ijk} := V^l_{ijk}\be\ul$.  Let $(\bV^3_{ijk}\,i\ge1,j\ge1,1\le k\le n-1)$
be independent Bernoulli random $3$-vectors with distribution $\MN_0(1;m_1/n,m_1/n,m\th/n)$,
independent also of the $V^l_{ijk}$, $l=1,2$.
Then $U^l_{ij} := \sum_{k=1}^{n-1} V^l_{ijk}$, $i,j \ge 1$, $l=1,2$, are independent random variables with 
distribution~$\Bi(n-1,m_0/n)$, and $\bU^3_{ij} := \sum_{k=1}^{n-1} \bV^3_{ijk}$, $i,j \ge 1$, are independent random
$3$-vectors with distribution $\MN_0(n-1;m_1/n,m_1/n,m\th/n)$, independent also of the $U^l_{ij}$, $l=1,2$.
Define $\bU^l_{ij} := U^l_{ij}\be\ul$, $l=1,2$.

Let~$Z\un$ be the $3$-type branching process constructed recursively by taking $Z\un_0 = (0,0,1)$ and, for $r \ge 1$, by taking
$Z\un_{r} := \sum_{l=1}^3\sum_{j=1}^{Z\un_{r-1,l}} \bU^{l}_{rj}$; 
this branching process has the offspring distribution specified in the lemma.
A copy of the process~$\hI$
can also be constructed recursively from the~$\bV^l_{ijk}$, $1 \le l \le 3$.  First, take
$\hI_0(v) = (0,0,1)$ and~$\hcI_0(v) = (\emptyset,\emptyset,\{v\})$. 
Then, for $r \ge 1$ and for $1 \le j \le \hI\so_{r-1}(v)$,
take $\tU_{rj}^1 :=  \sum_{k=1}^{n - N_{r-1}(v)} V_{rjk}^1$ 
 to represent the number of out- or bidirectional edges joining the $j$-th vertex in~$\hI\so_{r-1}(v)$ (with any prescribed ordering)
to vertices outside~$\cN_{r-1}(v)$; write $\btU_{rj}^1 := \tU_{rj}^1\be\ui$.
For each $1 \le j \le \hI\so_{r-1}(v)$, $\tU_{rj}^1$ vertices can now be chosen at random (without replacement) from
$[n] \setminus \cN_{r-1}(v)$, to constitute the vertices outside $\cN_{r-1}(v)$ that are joined to the $j$-th vertex 
in~$\hcI\so_{r-1}(v)$ by out- or bidirectional edges.  Let the union over~$j$ of these sets of vertices be denoted by
$\hcI_r^{(o1)}(v)$.
The same construction, now using the random variables $V_{rjk}^2$, can be used to determine the set of vertices outside
$\cN_{r-1}(v)$ that are joined to the $j$-th vertex in~$\hcI\si_{r-1}(v)$ by in- or bidirectional edges; the union of these
sets of vertices is denoted by~$\hcI_r^{(i1)}(v)$.
Finally, for each $1 \le j \le I\sb_{r-1}(v)$, take $\btU_{rj}^3 :=  \sum_{k=1}^{n - N_{r-1}(v)} \bV_{rjk}^3$ to represent the numbers of
out-edges, in-edges and bidirectional edges joining the $j$-th vertex in~$\cI\sb_{r-1}(v)$ (with any prescribed ordering)
to vertices outside~$\cN_{r-1}(v)$, distinguishing the types of the edges.  Once again, vertices from
$[n] \setminus \cN_{r-1}(v)$ can now be chosen at random (without replacement) to constitute the vertices
outside $\cN_{r-1}(v)$ that are joined to the $j$-th vertex in~$\cI\sb_{r-1}(v)$.
The unions of these vertices, distinguished according to the types of the edges joined to them,
yields sets $\hcI_r^{(i2)}(v)$, $\hcI_r^{(i2)}(v)$ and~$\cI_r\sb$.  Finally, any vertex in
$\hcI_r^{(o*)} := \hcI_r^{o1}(v) \cup \hcI_r^{o2}(v)$ or
in $\hcI_r^{(i*)} := \hcI_r^{i1}(v) \cup \hcI_r^{i2}(v)$
that is \adbr{an} end point of more than one edge is removed, yielding the sets $\hcI_r\so$ and~$\hcI_r\si$,
respectively.

By construction, since $N_r(v) \ge 1$ for all $r \ge 0$, it follows that $\btU_{rj}\ul \le \bU^{l}_{rj}$
component\-wise, for each $r$ and~$j$.
The subsequent construction of the sets $\hcI_r\so$ and~$\hcI_r\si$, in which duplicated vertices are counted only once,
and some vertices are discarded, shows that, if
$Z\un_{r-1} \ge \hI_{r-1}$, then $Z\un_{r} \ge \hI_r$ also.  Since the inequality is true for $r=0$, it is true
for all $r \ge 0$ as required, by induction.
\end{proof}

\ignore{
by taking $\tU_{rj} :=  \sum_{k=1}^{n - N_{r-1}(v)} V_{rjk}$,  for $1 \le j \le I_{r-1}(v)$,
 to represent the number of edges joining the $j$-th vertex in~$\cI_{r-1}(v)$ (with any prescribed ordering)
to vertices outside~$\cN_{r-1}(v)$.
For each $1 \le j \le I_{r-1}(v)$, $\tU_{rj}$ vertices can now be chosen at random (without replacement) from
$[n] \setminus \cN_{r-1}(v)$, to constitute the vertices outside $\cN_{r-1}(v)$ that are joined to the $j$-th vertex in~$\cI_{r-1}(v)$;
the union over~$j$ of these sets of vertices yields~$\cI_r(v)$.
Note that thus $I_r(v) \le \sum_{j=1}^{I_{r-1}(v)} \tU_{rj} \le \sum_{j=1}^{I_{r-1}(v)} U_{rj}$, and that
thus $I_r(v) \le Z\un_r$ if $I_{r-1}(v) \le Z\un_{r-1}$;  induction starting with $Z_0 = I_0(v) = 1$ thus shows that
$I_r(v) \le Z\un_r$ for all $r \ge 0$.
these are then linked at random (without replacement) to the vertices of 
$[n] \setminus \cN_{r-1}(v)$,
to define~$\cN_r(v)$.
}

We now show that the processes $Z\un$ and~$\hI$ are reasonably close to one another.
The proof of the following lemma parallels that of Lemma~\ref{L2} for the Bernoulli random
graph.
In the remainder of the section, the designation `suitable constant' is used for a constant
depending on the values of $m$ and~$\th$, that
 can be taken to be uniform over all $m,\th$ such that $m_0 \in [1+\d,\D]$,
for any $\d > 0$ and $\D < \infty$.

\begin{lemma}\label{DGL2}
 The processes $Z\un$ and $\hI(v)$, constructed together as above, satisfy
 \[
    \ex\|Z_r\un - \hI_r(v)\|_1 \Le n^{-1}m_0^{2r}\,c_{\ref{DGL2}}(m,\th), \qquad r \ge 0,
 \]
for a suitable constant $c_{\ref{DGL2}}(m,\th)$.
\end{lemma}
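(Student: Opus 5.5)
Since Lemma~\ref{DGL1} gives $Z\un_r \ge \hI_r(v)$ componentwise, we have $\ex\|Z\un_r - \hI_r(v)\|_1 = \ex\{\bone\TT Z\un_r\} - \ex\{\bone\TT\hI_r(v)\}$. It therefore suffices, as in Lemma~\ref{L2}, to show that the expected total ring size is close to the branching-process mean. Let $M_n$ be the mean matrix of~$Z\un$. Type~$1$ and~$2$ rows are $(n-1)(m_0/n)\be\ui$ and $(n-1)(m_0/n)\be\ut$, and the type~$3$ row is $((n-1)m_1/n,(n-1)m_1/n,(n-1)m\th/n)$. Then $\ex Z\un_r = M_n^r\be\uh{}\TT$ in row-vector convention. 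The vector $(1,1,1)$ is a right eigenvector of $M_n$ with eigenvalue $m_{0,n}:=(n-1)m_0/n$, because $2m_1+m\th = m_0+m_0\cdot$ wait: $2m_1 + m\th = m(1-\th)+m\th\cdot$; more precisely $m_1+m_1+m\th = m$, whereas the type-$1$ row sums to $m_0$. So $(1,1,1)$ is not an eigenvector. I will instead use the weights $\bone\TT$ together with the crude bound $\|M_n^r\| \le c\,\max(m_0,m\th)^r$. Since the spectral radius of $M_n$ is $\max\{m_{0,n}, (n-1)m\th/n\}$ and $m\th\le m_0$, we get $\ex\|Z\un_r\|_1\le c(m,\th)m_0^r$, with a factor $r$ if $m\th=m_0$, i.e.\ $\th=1$, where types $1,2$ are absent and the bound is trivially fine. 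The second moments $\ex\{(Z\un_{l,s})^2\}$ and $\ex\{Z\un_{l,s}Z\un_{l',r}\}$ are bounded in the same way by multitype analogues of~\Ref{variance-bnd}, via the standard recursion for second moments of multitype processes (Harris~(1963), Chapter~2). This gives $\ex\{\|Z\un_s\|_1\|Z\un_r\|_1\} \le c\, m_0^{r+s}$.

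Next I will derive the analogue of~\Ref{2.1} for the vector $\hat n_r := \ex\hI_r(v)$. Condition on~$\cN_r(v)$. A vertex $w \notin \cN_r(v)$ enters $\hcI\so_{r+1}(v)$ exactly when it receives an out- or bidirectional edge from a single vertex in~$\hcI\so_r(v)$, or an out-edge from a single vertex in~$\cI\sb_r(v)$, and no other edge from~$\cN_r(v)$. The probability of this event is at least $(\hI\so_r m_0/n + I\sb_r m_1/n)(1-m/n)^{N_r(v)}$. Comparing with the upper count $\sum_{v'}\hI\sos_{r+1}(v,v')$, multiplying by $n-N_r(v)$, and using $(1-x)^N \ge 1-Nx$, I obtain componentwise
\[
  \hat n_r M_n - \e_n(r)\bone \Le \hat n_{r+1} \Le \hat n_r M_n,
\]
where $\e_n(r) \le (c/n)\,\ex\{\|\hI_r(v)\|_1 N_r(v)\}$. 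The same holds for the $i$ and $b$ components. The $b$-component has no loss from other edges except through the $N_r$ term, which is exactly what is needed.

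To bound $\e_n(r)$, note that $N_r(v)$ includes the full sets $\tcN\so_r,\tcN\si_r$, which are not dominated by~$Z\un$. However, $\cN\so_r(v)$ is distributed as the neighbourhood in $\cG(n,m_0/n)$, and likewise $\cN\si_r(v)$. So $N_r(v)\le N\so_r(v)+N\si_r(v)$, and each term has second moment at most $m_0^{2r}(m_0/(m_0-1))^3$ by~\Ref{N-squared-bnd}. Also $\|\hI_r\|_1\le\|Z\un_r\|_1$. Cauchy--Schwarz then gives $\e_n(r)\le c\,m_0^{2r}/n$. Iterating, $\ex\|Z\un_r\|_1-\ex\|\hI_r\|_1 \le \sum_{j<r}\e_n(j)\,\|M_n^{r-1-j}\bone\TT\|_\infty \le (c/n)\sum_j m_0^{r-1-j}m_0^{2j} \le c' m_0^{2r}/n$, with a geometric sum since $m_0>1$.

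The main obstacle is this error-propagation step. It requires the bound $\|M_n^k\|\le c\,m_0^k$ uniformly over the suitable parameter range, which involves handling near-degenerate eigenvalues when $m\th$ is close to $m_0$, i.e.\ $\th$ near~$1$. I would handle it explicitly using the triangular structure: types $1,2$ only produce themselves. This gives $(M_n^k)_{31} = \sum_{a+b=k-1}(m\th)^a m_1 m_0^b \le k\,m_1 m_0^{k-1}$. The factor $k$ is then absorbed using $k m_0^{-k/2}\le c$ as in~\Ref{rmr-bnd}, at the cost of replacing $m_0^{2r}$ by the same power with a larger constant; the geometric sums still close.
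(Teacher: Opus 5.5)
Your proposal is correct and is essentially the paper's argument. Both use the componentwise sandwich $M_n\TT\bn_r - \beps_r \le \bn_{r+1} \le M_n\TT\bn_r$, control the error by Cauchy--Schwarz with the $\cG(n,m_0/n)$ second-moment bounds for $N\so_r(v)$ and $N\si_r(v)$, iterate, and convert to an $L^1$ bound via the domination of Lemma~\ref{DGL1}. The only difference is that the paper gets the lower bound from Bonferroni on $\cJ\so_{r+1}(v)$ minus the multiply-hit count $K_{r+1}(v)$, where you compute the exactly-one-edge probability directly; the two are equivalent.

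The degeneracy you fear near $\th=1$ does not occur. Since $m_1 = m_0 - m\th$, your sum telescopes exactly: $(M^k)_{31} = m_0^k - (m\th)^k = m_0^k(1-\g^k)$ with $\g = m\th/m_0$. This is the closed form~\Ref{DG-M-powers}, so no factor $k$ ever needs absorbing.
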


\begin{proof}
 Let~$\cF_r(v)$ denote the $\s$-field generated by  $\bigl((\tcI\so_s(v),\tcI\si_s(v),I\sb_s(v)),\,0 \le s \le r\bigr)$;
 note in particular that $\bigl((\hcI\so_s(v),\hcI\si_s(v),I\sb_s(v)),\,0 \le s \le r\bigr)$ is $\cF_r(v)$-measurable.  Fix any $r \ge 0$,
 and suppress the initial vertex~$v$ in the notation.
 Let~$\cJ\so_{r+1}(v)$ consist of those vertices in $[n]\setminus \cN_r(v)$ that are reached from a vertex in~$\cI_r\sb(v)$ by an out-edge,
 or reached from a vertex in~$\hcI\so_r(v)$ by an out- or bidirectional edge.  Then, conditional on~$\cF_r(v)$, the probability that
 a given vertex~$v' \in [n]\setminus \cN_r(v)$ belongs to~$\cJ\so_{r+1}(v)$ is
 \[
      1 - (1- m_1/n)^{I\sb_r(v)}(1-m_0/n)^{\hI\so_r(v)},
 \]
where $m_0$ and~$m_1$ are defined in~\Ref{mo-def}.  Thus,  by the first and second Bonferroni inequalities,
the conditional expectation of $J\so_{r+1}(v) := \card(\cJ\so_{r+1}(v))$, given~$\cF_r(v)$, satisfies
\eqa\label{DG-J-bnd}
    \lefteqn{(n - N_r(v))f_n(I\sb_r(v), \hI\so_r(v)) ( 1 - \half f_n(I\sb_r(v), \hI\so_r(v)))}\non\\
           &&\Le \ex\{J\so_{r+1}(v) \giv \cF_r(v)\} \Le (n-1)f_n(I\sb_r(v), \hI\so_r(v)),
\ena
where $f_n(i_1,i_2) := n^{-1}\{m_1 i_1 + m_0 i_2\}$.  The vertices in~$\hcI\so_{r+1}(v)$ are those in~$\cJ\so_{r+1}(v)$
that are not reached by two or more edges, whatever their directions, from $\cN_{r}(v)$, implying that
\eq\label{DG-J-to-I}
     J\so_{r+1}(v) \ \ge\ \hI\so_{r+1}(v) \ \ge\ J\so_{r+1}(v) - K_{r+1}(v),
\en
where~$K_{r+1}(v)$ is the number of vertices in $[n]\setminus \cN_r(v)$ that are joined to more than one vertex in~$\cN_r(v)$ by 
edges of any direction.  Note that $\ex\{K_{r+1}(v) \giv \cF_r(v)\}$ is easily bounded:
\eq\label{DG-K-bnd}
    \ex\{K_{r+1}(v) \giv \cF_r(v)\} \Le n\,\binom{N_r(v)}2 (m/n)^2.
\en
Combining \Ref{DG-J-bnd}--\Ref{DG-K-bnd} and taking expectations, and writing $n_{r,1} := \ex\{\hI\so_r(v)\}$, 
$n_{r,2} := \ex\{\hI\si_r(v)\}$ and $n_{r,3} := \ex\{I\sb_r(v)\}$, it follows that
\eq\label{DG-expec-o}
   m\un_1 n_{r,3} + m\un_0 n_{r,1} - \e_{r1}  \Le  n_{r+1,1} \Le m\un_1 n_{r,3} + m\un_0 n_{r,1},
\en
where $m_l\un := (n-1)m_l/n$, $l=1,2$, and where
\eq\label{DG-eps1-bnd}
    \e_{r,1} \Def \ex\{N_r(v) f_n(I\sb_r(v), \hI\so_r(v))\} + \half n \ex\{(f_n(I\sb_r(v), \hI\so_r(v)))^2\} 
                 + \frac{m^2}{2n}\ex\{(N_r(v))^2\}.
\en
The same argument, with the natural changes of indices, applies also to bound $n_{r+1,2}$.  The inequality~\Ref{2.1} for the Bernoulli
random graph, now for~$\cG(n,m\th/n)$, gives
\eq\label{DG-expec-b}
   \th m_n n_{r,3} - \e_{r,3} \Le n_{r+1,3} \Le \th m_n n_{r,3},
\en
where
\eq\label{DG-eps3-bnd}
   \e_{r,3} \Def (m\th/n)\bigl(\ex\{I\sb_r(v) N\sb_r(v)\} + \half m\th\ex\{(I\sb_r(v))^2\}\bigr). 
\en
Defining $\bn_r := (n_{r,1},n_{r,2},n_{r,3})\TT$ and $\beps_r := (\e_{r,1},\e_{r,2},\e_{r,3})\TT$,
the statements \Ref{DG-expec-o}--\Ref{DG-eps3-bnd} provide the analogue of~\Ref{2.1} in the Bernoulli case,
in the form
\eq\label{DG-2.1}
   M_n\TT \bn_r - \beps_r \Le \bn_{r+1} \Le M_n\TT \bn_r,
\en
where  \adbr{$M_n := (n-1)M/n$}, and, writing $\g := m\th/m_0$,
\eq\label{M-matrix-def}
    M \Def m_0\begin{pmatrix}
                1 & 0 & 0 \cr
                0 & 1 & 0 \cr
                1 - \g & 1 - \g & \g
              \end{pmatrix}\,.
\en
The matrix~$M_n$ is the offspring mean matrix of the branching process~$Z\un$, and
\eq\label{DG-Zn-mean}
    (\be\uh)\TT M_n^r  \Eq \ex\{(Z_r\un)\TT\}.
\en
Iterating the inequalities~\Ref{DG-2.1}, with $n_0 = \be\uh$, then yields
\eq\label{DG-2.2}
   (\be\uh)\TT M_n^r - \sum_{s=0}^{r-1} \beps_s \TT M_n^{r-1-s} \Le \bn_r\TT \Le (\be\uh)\TT M_n^r ,
\en
the analogue of~\Ref{2.2}.

The sum in~\Ref{DG-2.2} is simplified by observing that $0 \le \g < 1$, because $0 \le \th < 1$, and hence that,
for each $r \ge 0$,
\eq\label{DG-M-powers}
    M^r \Eq m_0^r\begin{pmatrix}
                1 & 0 & 0 \cr
                0 & 1 & 0 \cr
                1 - \g^r & 1 - \g^r & \g^r
              \end{pmatrix}
         \Le m_0^r\begin{pmatrix}
                1 & 0 & 0 \cr
                0 & 1 & 0  \cr
                1 & 1 & 1
              \end{pmatrix},
\en
where the inequality \adbb{is} to be interpreted elementwise.
The quantities~$\beps_s$, whose components are given in \Ref{DG-eps1-bnd} and~\Ref{DG-eps3-bnd}, are bounded
using crude estimates.  For instance, we have
\eq\label{ADB-fII}
  f_n(I\sb_s, \hI\so_s) \Eq n^{-1}\{m_1 I\sb_s + m_0 \hI\so_s\} \Le (m_0/n)I\so_s,
\en
and~$\ex\{(I\so_s(v))^2\}$ can be bounded using \Ref{IsIr-bnd} and~\Ref{eta1-bnd} applied to the Bernoulli graph $\cG(n,m_0/n)$, 
giving
\eq\label{DG-Io2-bnd}
      \ex\{(I\so_s(v))^2\} \Le m_0^s \h_1(s,m_0) \Le m_0^{2s}\Bmmio.
\en
Then $N_s(v) \le N\so_s(v) + N\si_s(v)$, where~$\ex\{(N\so_s(v))^2\}$ can be bounded using~\Ref{N-squared-bnd} applied to
the Bernoulli graph~$\cG(n,m_0/n)$, to give
\eq\label{DG-N2-bnd}
      \ex\{(N_s(v))^2\} \Le 4\ex\{(N\so_s(v))^2\} \Le 4\h_2(s,m_0) \Le  4m_0^{2s}\Bmmio^3.
\en
Thus, from \Ref{DG-eps1-bnd}, \adbr{\Ref{ADB-fII},} \Ref{DG-Io2-bnd} and~\Ref{DG-N2-bnd}, 
together with the Cauchy--Schwarz inequality, we have
\eqa\label{DG-eps1-bnd-2}
   \e_{s,1} &\le& \frac{m_0}n \,\sqrt{\ex\{(N_s(v))^2\} \ex\{(I\so_s(v))^2\}} + \frac{m_0^2}{2n}\, \ex\{(I\so_s(v))^2\} 
            + \frac{m^2}{2n}\,\ex\{(N_s(v))^2\} \non\\
            &\le& n^{-1}c_1(m,\th)m_0^{2s},
\ena
and the same bound holds also for~$\e_{s,2}$.%
\ignore{
For~$\e_{s,3}$, the bound in~\Ref{IN+mI-squared} for the Bernoulli random
graph~$\cG(n,m\th)$ can be used directly if $m\th > 1$, giving
\[
     \e_{s,3} \Le \frac{(m\th)^{2s+2}}n\,\Bigl(\frac{m\th}{m\th-1}\Bigr)^2.
\]
If $m\th \le 1$, 
$\e_{s,3}$ can be bounded using~\Ref{IN+mI-squared-0},
which is in turn based on \Ref{eta1-bnd} and~\Ref{N-squared-bnd-0}, giving $\e_{s,3} = O(n^{-1}s^2)$ for $m\th=1$
and $\e_{s,3} = O(n^{-1})$ for $m\th < 1$.  In all cases, $\e_{s,3}$ can be bounded by $n^{-1}c_1'(m,\th)m_0^{2s}$,
for a suitable choice of~$c_1'(m,\th)$.
}
For~$\e_{s,3}$, we can use the bound~\Ref{IN+mI-squared-0}  for the Bernoulli random graph, but now
in the context of~$\cG(n,m\th)$, so that~$m$ is replaced by~$m\th$.  Since $m\th < m_0$, and since the bound~\Ref{IN+mI-squared-0}
is increasing in~$m$, we can take the crude estimate given by the bound in~\Ref{IN+mI-squared} with $m_0$ in place of~$m\th$,
giving
\[
    \e_{s,3} \Le \frac{m\th}n\,m_0^{2s+1}\,\Bmmio^2.
\]
Combining this with~\Ref{DG-eps1-bnd-2}, and substituting the result into~\Ref{DG-2.2},
it follows that
\[
    0 \Le \ex\{Z_r\un\} - \ex\{\hI_r(v)\} \Le n^{-1}m_0^{2r}\,c_2(m,\th) \bone, \qquad r \ge 0,
 \]
for a suitable constant $c_2(m,\th)$.  In view of Lemma~\ref{DGL1},
the statement of the lemma now follows.
\end{proof}

\begin{remarkno}\label{DGR1}{\rm
  Note that, for fixed~$\th$, the quantity $c_2(m,\th)$ is uniformly bounded for $m_0 \in [1+\d,\D]$,
for any $\d > 0$ and $\D < \infty$, where $m_0$ is as in~\Ref{mo-def}.} \hfill$\Box$
\end{remarkno}

As for the Bernoulli random graph, it is more satisfactory to use a single branching process for the approximation
of~$\hI(v)$. We define~$Z$ to be the three type branching process, with initial state $(0,0,1)\TT$, that has Poisson
distributed offspring numbers with mean matrix~$M$, where the numbers of offspring of type~$3$ individuals are independent
\adbr{of one another}.
\ignore{
\eqs
   &&(m_0,0,0) \quad \mbox{for type}\ 1\ \mbox{individuals};\\
   &&(0,m_0,0)  \quad \mbox{for type}\ 2\ \mbox{individuals}; \\
   &&(m_1,m_1,m\th) \quad \mbox{for type}\ 3\ \mbox{individuals},
\ens
with type~$3$ individuals having independent numbers of offspring of the different types.
}
Then, as in Lemma~\ref{L3},
we can show that the copies of the processes $\hI(v)$ and~$Z$ can also be coupled in such a way as to be close.

\begin{lemma}\label{DGL3}
Let
\eq\label{DG-n0-def}
      n_0 \Def n_0(m,\th) \Def m \max\biggl\{2,\frac1{\sqrt{m_0} - 1}, \frac1{1 - 1/\sqrt{1+\th}} \biggr\}.
\en
Then, for any $n \ge n_0$,
the processes $\hI(v)$ and~$Z$ can be constructed on the same probability space in such a way
that
\[
   \ex\|Z_r - \hI_r(v)\|_1 \Le n^{-1}m_0^{2r}\,c_{\ref{DGL3}}(m,\th),\qquad r \ge 0,
\]
where $c_{\ref{DGL3}}(m,\th)$ is uniformly bounded for $m_0 \in [1+\d,\D]$,
for any $\d > 0$ and $\D < \infty$.
\ignore{
where $\h(0,m) = 0$ and
\eq\label{eta-bnd}
    \h(r,m) \Le  \frac{m^{2r+2}}{(m-1)^3} + 7r m_0^{3r/2} \Le 4m^{2r}\,\Bmmi^3, 
               \qquad r\ge1,
\en
is increasing in both $r$ and~$m$ in $m > 1$.}
\end{lemma}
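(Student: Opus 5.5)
We follow the pattern of Lemma~\ref{L3}, using the triangle inequality
\[
\ex\|Z_r - \hI_r(v)\|_1 \Le \ex\|Z\un_r - \hI_r(v)\|_1 + \ex\|Z_r - Z\un_r\|_1 .
\]
The first term is bounded by Lemma~\ref{DGL2}. It therefore suffices to couple $Z\un$ and $Z$ so that $\ex\|Z_r - Z\un_r\|_1 \le n^{-1} c(m,\th)\, r\, m_0^{3r/2}$. This is enough, since $r m_0^{3r/2} \le c' m_0^{2r}$ by~\Ref{rmr-bnd} applied with $m_0$ in place of~$m$.

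\textbf{Step 1: Poissonization of single trials.} Each type~3 trial $\bV^3_{ijk}$ is a multinomial indicator with cell probabilities $m_1/n$, $m_1/n$, $m\th/n$. Choose rates $\l^{(1)}_n,\l^{(2)}_n,\l^{(3)}_n$ by
\[
e^{-\l^{(1)}_n} = 1-m_1/n, \qquad e^{-\l^{(3)}_n} = 1-m\th/n,
\]
with $\l^{(2)}_n=\l^{(1)}_n$. Take independent Poisson variables $(P_1,P_2,P_3)$ with these means. The indicator can then be realized as a function of them so that it is componentwise dominated by $(P_1,P_2,P_3)$ on the event that at most one of them is nonzero. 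The event that two or more are nonzero has probability $O(n^{-2})$, and on it we resample the indicator. Summing over the $n-1$ trials gives offspring vectors dominated by independent Poisson variables with means $(n-1)\l^{(l)}_n$, apart from an error of expected size $O(n^{-1})$ per individual. An alternative that avoids the resampling is a direct coupling of $\MN_0(n-1;\cdot)$ with independent Poissons in expected $\ell_1$ distance $O(1/n)$, for example via Poisson thinning. Types 1 and 2 are handled exactly as in Lemma~\ref{L3}, with rate $\l_n = -\log(1-m_0/n)$.

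\textbf{Step 2: matching the mean matrix.} Next, couple the Poisson processes with means $(n-1)\l^{(l)}_n$ to $Z$, whose offspring means are given by $M$, monotonically in each Poisson coordinate. The offspring means then differ entrywise by $O(m/n)$, and the per-individual discrepancies propagate through the generations. The expected $\ell_1$ discrepancy at generation~$r$ is at most
\[
\sum_{s<r} \bone\TT \tilde M^{\,s}\,(\text{per-individual error})\,\tilde M^{\,r-1-s}\bone ,
\]
where $\tilde M$ is an entrywise upper bound for both mean matrices, namely $M(1+c/n)$. The explicit form~\Ref{DG-M-powers} gives $\|\tilde M^s\| \le 3 m_0^s (1+c/n)^s$. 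This is the only place where the three-type structure matters, and because $\g<1$ the bottom row stays bounded.

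\textbf{Step 3: controlling the inflation factor.} The factor $(1+c/n)^r$ must be at most $m_0^{r/2}$. This is where the conditions in $n_0$ enter. The condition $n \ge m/(\sqrt{m_0}-1)$ handles types 1 and 2, as in~\Ref{3.69}. The condition $n \ge m/(1-1/\sqrt{1+\th})$ is needed to control $\l^{(3)}_n$, because $-\log(1-m\th/n)$ must be compared within a factor $\sqrt{1+\th}$ so that the bound $\tilde M \le \sqrt{m_0}\,M$-type estimate holds. The condition $n \ge 2m$ gives the analogue of~\Ref{3.1}.

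We expect the main obstacle to be Step~1, the multinomial-to-independent-Poisson coupling for type~3 individuals, together with verifying that the resulting error constants are uniform for $m_0\in[1+\d,\D]$. The remaining work is bookkeeping of matrix powers via~\Ref{DG-M-powers}.
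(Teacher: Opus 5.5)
Your overall skeleton matches the paper's: Lemma~\ref{DGL2}, plus a coupling of $Z\un$ with $Z$ costing $O(n^{-1} r m_0^{3r/2})$, with matrix powers controlled by \Ref{DG-M-powers} and the inflation factor handled as in \Ref{3.69}. However, Step~1 fails as written. With independent $P_l\sim\Po(\l^{(l)}_n)$ and $e^{-\l^{(l)}_n}=1-p_l$, domination on $\{P=0\}$ forces $\xi=0$ there. But $\pr[P=0]=(1-m_1/n)^2(1-m\th/n)>1-m/n$ whenever $\th<1$. So the indicator you build puts too much mass on $0$, whatever you do on the event that two or more $P_l$ are nonzero.

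In fact, with these rates no coupling can give $\xi\le P$ a.s. Since $\pr[\xi=\be\ui]=\pr[P_1\ge1]$, domination would force $\{\xi=\be\ui\}=\{P_1\ge1\}$ and $\{\xi=\be\ut\}=\{P_2\ge1\}$ a.s. Yet the latter two events intersect with probability $(m_1/n)^2>0$, while the former two are disjoint.

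Your route can be rescued. Use a maximal coupling of each trial: the total variation distance is $O(n^{-2})$, and the coupling must also move some mass off $\{P=0\}$. This gives expected $\ell_1$ error $O(n^{-1})$ per individual. Your Step~2 propagation bound, which is sound in principle, then finishes the argument, since every mean matrix involved is dominated by $(1+m_0/n)M$.

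The paper avoids all this with the thinning you only allude to, but using a \emph{common} total rate. Take $\L_n=-\log(1-m/n)$ and independent $\Po(m_1\L_n/m)$, $\Po(m_1\L_n/m)$, $\Po(m\th\L_n/m)$, and let the indicator record the type of the first point, if any. This indicator has exactly the law $\MN_0(1;m_1/n,m_1/n,m\th/n)$ and is dominated componentwise. Hence $Z\un\le\tZ\un$, where $\tZ\un$ has Poisson offspring with mean matrix $(1+m_0/n)M_n$. Monotone Poisson couplings then go down to mean matrix $M_n$ and up to $M$. At every stage $\ex\|\cdot\|_1$ is just a difference of means, read off from \Ref{DG-M-powers}, so no propagation argument is needed.

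This is also the actual role of the third term in \Ref{DG-n0-def}: it guarantees $\L_n\le(m/n)(1+m_0/n)$, as in \Ref{DG3.1a}. That way the type-3 row is inflated by the same factor $1+m_0/n$ as the type-1 and type-2 rows. Your explanation, comparing $-\log(1-m\th/n)$ within a factor $\sqrt{1+\th}$, is not what happens. In your separate-rates scheme, $-\log(1-p)\le p(1+p)$ with $p\le m_0/n$ already suffices, and that condition would go unused.
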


\begin{proof}  As in the proof of Lemma~\ref{L3}, we start by finding a branching process~$\tZ\un$ with Poisson offspring distributions
 that can be coupled with~$Z\un$ in such a way that $\tZ\un_r \ge Z\un_r$ componentwise, for all $r \ge 0$.  First, we define
 $\l_{0,n}$ so that $e^{-\l_{0,n}} = 1 - m_0/n$, and thus, if $n \ge n_0 \ge 2m_0$, so that
 \eq\label{DG3.1}
     m_0/n \Le \l_{0,n} \Eq -\log(1-m_0/n) \Le (m_0/n)(1 + m_0/n).
\en
Then note that the distribution $\Be(m_0/n)$ is stochastically smaller than the
 distribution~$\Po(\l_{0,n})$, and thus that $\Bi(n-1,m_0/n)$ is stochastically smaller than~$\Po(\tm_{0,n})$,
 where $\tm_{0,n} := (n-1)\l_{0,n} \le m_0\{(n-1)/n\}(1 + m_0/n)$.
Similarly, choose $\L_n$ so that $e^{-\L_n} = 1 - m/n$, \adbr{and
define
\eq\label{DG-lambda-defs}
   \l_{1,n} \Def m_1\L_n/m;\qquad \l_{2,n} \Def m\th\L_n/m.
\en
Note that, by the mean value theorem,
\[
     - \frac1y\,\log(1-y) \Le 1 + \frac{y(1+y)}{2(1-y)} \Le 1 + \frac{y(1+\th)}2\,,  \qquad \adbb{0 < y \le \th/(2 +\th);}
\]
thus, for $n \ge n_0 \ge m/\{1 - (1+\th)^{-1/2}\} \ge m(2+\th)/\th$, it follows that
\eq\label{DG3.1a}
     m/n \Le \L_n \Eq -\log(1-m/n)  \Le (m/n)(1 + m_0/n).
\en
It is thus} immediate that the distribution $\MN_0(1;m_1/n,m_1/n,m\th/n)$ is stochastically smaller than the
product distribution $\Po(\l_{1,n}) \times \Po(\l_{1,n}) \times \Po(\l_{2,n})$, and hence that the distribution
$\MN_0(n-1;m_1/n,m_1/n,m\th/n)$ is stochastically smaller than
the product distribution $\Po(\tm_{1,n}) \times \Po(\tm_{1,n}) \times \Po(\tm_{2,n})$,
where, for $n \ge n_0$,
\eqs
  \tm_{1,n} &:=& (n-1)\l_{1,n} \Le m_1\{(n-1)/n\}(1 + m_0/n);\\
  \tm_{2,n} &:=& (n-1)\l_{2,n} \Le m\th\{(n-1)/n\}(1 + m_0/n).
\ens
Hence, defining~$\tZ\un$ to be the branching process, with initial state~$\be\uh$, having
Poisson offspring distributions with mean matrix $(1 + m_0/n)M_n$,
\ignore{
\[
  \tM_n \Def (1+m_0/n)\begin{pmatrix}
                \tm_{0,n} & 0 & 0 \cr
                0 & \tm_{0,n} & 0 \cr
                \tm_{1,n} & \tm_{1,n} & \tm_{2,n}
              \end{pmatrix} \Le (1 + m_0/n)M_n,
\]
}
and with the type~3 offspring numbers independent \adbr{of one another},
it is immediate that, for $n \ge n_0$, copies of $Z\un$ and~$\tZ\un$ can be coupled in such a way that
$Z_r\un \le \tZ_r\un$ a.s.\ componentwise for all~$r \ge 0$.  This in turn implies that
\eqa
    0 &\le& \ex\|\tZ_r\un - Z_r\un\|_1 \Eq  \|\ex\{\tZ_r\un\} - \ex\{Z_r\un\}\|_1  \non \\
    &\le& \{(1 + m_0/n)^r - 1\}\|(\be\uh)\TT M_n^r\|_1  \Le (3r/n)m_0^{1 + 3r/2} , \label{DG-tilde-n-bnd}
\ena
using \Ref{3.69} (since $n \ge n_0 \ge m_0/(\sqrt{m_0} - 1)$)  and~\Ref{DG-M-powers} for the final inequality.
The same argument, with the same bound, can then
be used to compare $\tZ\un$ with a branching process~$\tZ^{(n,*)}$ having Poisson offspring distributions with mean matrix~$M_n$,
again with the type~3 offspring numbers independent, and then~$\tZ^{(n,*)}$ can be compared with~$Z$, with a bound
\eqa
  \bzero \Le \ex\|Z_r - \tZ_r^{(n,*)}\|_1 &=&  \|\ex\{Z_r\} - \ex\{\tZ_r^{(n,*)}\}\|_1 \non \\
    &\le& \{1 - (1 - 1/n)^r\}\|(\be\uh)\TT M^r\|_1  \Le (3r/n)m_0^r . \label{DG-tilde-n-bnd-2}
\ena
Combining the bounds in \Ref{DG-tilde-n-bnd} (twice) and~\Ref{DG-tilde-n-bnd-2} with that of Lemma~\ref{DGL2} completes the proof;
note that the quantity $c_{\ref{DGL3}}(m,\th)$ is uniformly bounded for $m_0 \in [1+\d,\D]$,
for any $\d > 0$ and $\D < \infty$, in view of Remark~\ref{DGR1}.
\end{proof}

It remains to show that the counts~$\hI_r\so(v)$ and~$\hI_r\si(v)$ are close to the counts $\tI_r\so(v)$ and~$\tI_r\si(v)$ that,
added to~$I_r\sb(v)$, make up the sizes $I_r\so(v)$ and~$I_r\si(v)$ of the out- and in-neighbourhood rings of~$v$.

\begin{lemma}\label{DGL-new}
 For each $r \ge 1$, we have
 \[
     \ex|\tI_r\so(v) - \hI_r\so(v)| \Le n^{-1}c_{\ref{DGL-new}} m_0^{2r}, 
 \]
 for a suitable constant $c_{\ref{DGL-new}} := c_{\ref{DGL-new}}(m,\th)$,
and the same bound holds for $\ex|\tI_r\si(v) - \hI_r\si(v)|$.
\end{lemma}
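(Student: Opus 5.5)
Since $\hcI\so_r(v)\subset\tcI\so_r(v)$ by construction, the difference $\tI\so_r(v)-\hI\so_r(v)$ is non-negative, so it suffices to bound its expectation. The plan is to split the discrepancy into vertices that are ``wrongly attributed'' and to bound each class by an expected count of coincidences, in the same spirit as the bound \Ref{DG-K-bnd} on $K_{r+1}(v)$. A vertex $w\in\tcI\so_r(v)\setminus\hcI\so_r(v)$ lies in $[n]\setminus\cN_{r-1}(v)$ and is reached by an out- or bidirectional edge from $\cN\so_{r-1}(v)$, but it fails to be in $\hcI\so_r(v)$ for one of two reasons. Either it is joined by at least two edges, of any direction, to $\cN_{r-1}(v)$; this case is counted by $K_r(v)$. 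Or it is reached by an out-edge from a vertex in $\tcI\so_{r-1}(v)\setminus\hcI\so_{r-1}(v)$, that is, from an earlier ``defective'' vertex. (One might also need a vertex of $\tcI\so_r(v)$ that lies in $\cN_{r-1}(v)$ via the in-neighbourhood; but then it belongs to $\tcI\si_s(v)$ for some $s<r$, and is reached by an edge from $\cN_{r-1}(v)$ besides the one through which it entered, so it is again a double-contact vertex charged to some $K_s$.)

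Writing $D_r := \tI\so_r(v)-\hI\so_r(v)$, this gives the recursive inequality
\[
   D_r \Le K_r(v) + L_r(v) + \sum_{u \in \tcI\so_{r-1}(v)\setminus\hcI\so_{r-1}(v)} (\mbox{number of out/bidirectional neighbours of $u$ outside } \cN_{r-1}(v)).
\]
Here $L_r(v)$ counts vertices of $\tcI\so_r(v)$ that were already in $\cN_{r-1}(v)$ (the intersection issue); these are likewise bounded by a double-contact count, with expectation at most $n\binom{N_{r-1}(v)}{2}(m/n)^2$ up to constants. Conditioning on $\cF_{r-1}(v)$, the last sum has conditional expectation at most $m_0 D_{r-1}$. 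Taking expectations and using \Ref{DG-K-bnd} together with \Ref{DG-N2-bnd} gives
\[
  \ex D_r \Le m_0\, \ex D_{r-1} + \frac{m^2}{2n}\,\ex\{N_{r-1}(v)^2\}\cdot c \Le m_0\,\ex D_{r-1} + c\,n^{-1} m_0^{2r-2}\Bmmio^3 .
\]
Iterating from $D_0=0$ yields
\[
  \ex D_r \Le c\,n^{-1}\sum_{s<r} m_0^{r-1-s}m_0^{2s} \Le c'\,n^{-1}m_0^{2r}\,\frac{m_0}{m_0-1},
\]
which is uniform for $m_0\in[1+\d,\D]$. The in-neighbourhood case follows by symmetry.

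The main obstacle will be the precise bookkeeping of the first step. We need to verify that every vertex of $\tcI\so_r(v)$ excluded from $\hcI\so_r(v)$ is accounted for either by a double contact or by descent from an earlier excluded vertex. In particular, we must handle vertices first reached through $\tcI\si$ or via $\cI\sb$ with out-edges, as well as vertices reached by bidirectional edges from the in-side. If the definitions make this descent argument awkward, the fallback is to bound $\ex\{\tI\so_r(v)\}-\ex\{\hI\so_r(v)\}$ directly. For the upper bound on $\ex\{\tI\so_r(v)\}$ we would use the marginal $\cG(n,m_0/n)$ structure of $I\so_r(v)$ and \Ref{2.2}, which give $\ex\tI\so_r(v)\le m_0^r-\ex I\sb_r(v)$. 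For $\ex\{\hI\so_r(v)\}$ we would use the lower bound in \Ref{DG-2.2} with \Ref{DG-M-powers}, namely $(\be\uh)\TT M_n^r$ minus $O(n^{-1}m_0^{2r})$. Comparing the first components, $m_0^r(1-\g^r)+m_0^r\g^r$ against $m_0^r$, gives agreement up to $O(r m_0^r/n)+O(n^{-1}m_0^{2r})$, and nonnegativity turns this bound on the difference of means into the required $L^1$ bound.
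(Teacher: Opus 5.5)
This is essentially the paper's argument. The paper bounds $\ex|\tI\so_r(v)-\hI\so_r(v)|$ by $\sum_{s=1}^{r} m_0^{r-s}\ex\{K_s(v)\}$: it charges each discrepant vertex to a double-contact vertex at some level $s$, counts that vertex's expected $m_0^{r-s}$ out-descendants, and finishes with \Ref{DG-K-bnd} and \Ref{DG-N2-bnd}; your recursion $\ex D_r\le m_0\,\ex D_{r-1}+O(n^{-1}m_0^{2r})$ unrolls to exactly this sum. Your separate term $L_r$, for vertices of $\tcI\so_r(v)$ already in the in-neighbourhood, covers a case the paper's $K_s$-count does not capture, and your parenthetical claim that such a vertex is charged to some $K_s$ is not right (the extra edge can join two vertices that both entered at level $r-1$, neither in $\cN_{r-2}(v)$); it is your direct edge-count bound of order $n^{-1}m_0^{2r}$ for $L_r$ that handles this case, at the same order.
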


\begin{proof}
The difference $\tI_r\so(v) - \hI_r\so(v)$ is the number of vertices~$v'$ in~$\tI_r\so(v)$ that are reached from~$v$ by an out-path
$v,v_1,\ldots,v_{r-1},v'$ containing at least one vertex~$v_s$ that is joined to a vertex~$w \in \cN_{s-1}(v)$ other than~$v_s$
by an edge of any direction.  The number~$K_s$ of vertices in $[n] \setminus \cN_{s-1}(v)$ that are joined to more than
one vertex in~$\cN_{s-1}(v)$ has expectation that can be bounded using \Ref{DG-K-bnd} and~\Ref{DG-N2-bnd}, and the expected number of
out-paths of length~$r-s$ from any such vertex is~$m_0^{r-s}$.  Hence
\eqs
    \ex|\tI_r\so(v) - \hI_r\so(v)| &\le& \sum_{s=1}^{r} m_0^{r-s} \ex\{K_s(v)\} \Le 2n^{-1}m^2 m_0^{2r} \Bmmio^4,
\ens
establishing the lemma.
\end{proof}

Now we show that, if $v' \notin \cN_r(v)$, \adbr{then}
$\hI_{r'}(v')$ is typically close for each $r' \ge 0$ to a random vector $\hI_{r'}(v';v)$ that is conditionally
independent of~$\cN_r(v)$, given~$\cF_r(v)$,
and which can be coupled to the population size~$\tZ_{r'}$ at generation~$r'$ of a $3$-type branching process~$\tZ$,
having the same distribution as~$Z$, that is independent of~$\cF_r(v)$.
We do so using an argument which is essentially that of Lemma~\ref{L4}.

\begin{lemma}\label{DGL4}
Conditional on~$\cF_r(v)$ and on $v' \notin \cN_r(v)$, let $\hcI_{r'}(v';v)$ denote the sets of vertices
$(\hcI\so_{r'}(v';v),\hcI\si_{r'}(v';v),\cI\sb_{r'}(v';v))$ at distance exactly~$r'$ from~$v'$,
defined in the restriction~$G_v$ of the graph~$G$ to the vertices $[n] \setminus \cN_r(v)$ in the same way as the sets
$(\hcI\so_{r'}(v),\hcI\si_{r'}(v),\cI\sb_{r'}(v))$ are defined in~$G$.
Let $\hI_{r'}(v';v) := (\hI\so_r(v';v),\hI\si_r(v';v),I\sb_r(v';v))$
denote their cardinalities; let~$\cF_{r'}(v';v) := \s\bigl(\hI_s(v';v),\,0 \le s \le r'\bigr)$.
Then,  if $n \ge n_0$, \adbr{given in~\Ref{DG-n0-def}},
the process $\hI(v';v)$  can be constructed on the same probability space as a $3$-type branching process~$\tZ$  
that is independent of~$\cF_r(v)$ and has the same distribution as~$Z$, as defined before Lemma~\ref{DGL3},
in such a way that, for $v' \notin \cN_r(v)$,
\eqs
   \lefteqn{\ex\{\|\tZ_{s} - \hI_{s}(v';v)\|_1 \giv \cF_r(v)\}I[N_r(v) \le n(m_0-1)/(2m_0)] } \\
   &&\Le (m_0/n)c_{\ref{DGL4}} m_0^{2s} +  (3sN_r(v)/n) m_0^{s} \quad \mbox{for each}\ s \ge 1,
\ens
for a suitable constant $c_{\ref{DGL4}} = c_{\ref{DGL4}}(m,\th)$.
\end{lemma}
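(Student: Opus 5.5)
We follow the three-stage route of Lemma~\ref{L4}, now with the $3$-type couplings of Lemmas \ref{DGL1}--\ref{DGL3}. Throughout we condition on~$\cF_r(v)$ and on $v' \notin \cN_r(v)$, and we write $n' := n - N_r(v)$. The restriction~$G_v$ is a copy of $\DG(n',m/n,\th)$. Equivalently it is $\DG(n',m'/n',\th)$ with $m' := mn'/n$, so that $m'_0 = m_0 n'/n$ and $m'_1 = m_1 n'/n$. The indicator $I[N_r(v) \le n(m_0-1)/(2m_0)]$ ensures $m'_0 \ge (m_0+1)/2 > 1$. Hence $m'_0$ lies in a compact subset of $(1,\infty)$ determined by $\d,\D$, and the suitable constants from the earlier lemmas, applied with $(n',m')$, stay uniformly bounded. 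We also need $n' \ge n_0(m',\th)$; we expect to obtain this from $n \ge n_0(m,\th)$ and $m'/n' = m/n$, or otherwise absorb it into the constant.

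\emph{Step 1.} Use fresh indicator arrays $V'^l_{ijk}$ and $\bV'^3_{ijk}$, independent of those used for~$v$. From them build, exactly as in Lemma~\ref{DGL1}, a $3$-type branching process $\hZ\un$ with the offspring laws of Lemma~\ref{DGL1} (based on $n-1$ trials). Build also the intermediate process $\hZ\und$, which uses only the first $n'-1$ trials and so has offspring laws based on $\Bi(n'-1,\cdot)$ and $\MN_0(n'-1;\cdot)$. Construct $\hI(v';v)$ inside $G_v$ from the same arrays. The argument of Lemma~\ref{DGL1} then gives $\hI_s(v';v) \le \hZ\und_s \le \hZ\un_s$ componentwise.

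\emph{Step 2.} Apply Lemma~\ref{DGL2} to $\DG(n',m'/n',\th)$. Since $m'/n' = m/n$, this yields
\[
\ex\{\|\hZ\und_s - \hI_s(v';v)\|_1 \giv \cF_r(v)\} \le (n')^{-1}(m'_0)^{2s}c_{\ref{DGL2}}(m',\th).
\]
Because $n' \ge n/2$ and $m'_0 \le m_0$, the right-hand side is at most a constant times $n^{-1}m_0^{2s}$, which is of the form $(m_0/n)c\,m_0^{2s}$. The uniformity of $c_{\ref{DGL2}}$ over $m'_0 \in [(1+m_0)/2, m_0]$ comes from Remark~\ref{DGR1}.

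\emph{Step 3.} Since $\hZ\und \le \hZ\un$ componentwise, their $\ell_1$ difference has expectation
\[
\|(\be\uh)\TT\{M_n^s - M_{n'}^s\}\|_1, \quad\text{where } M_{n'} := (n'-1)M/n.
\]
Using the explicit powers~\Ref{DG-M-powers}, this equals $\{1-((n'-1)/(n-1))^s\}$ times a sum of entries bounded by $3m_0^s$. It is therefore at most $3sN_r(v)m_0^s/n$, which is precisely the second term of the statement.

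\emph{Step 4.} Couple $\hZ\un$ to a Poisson process $\tZ$ with mean matrix~$M$, independent of~$\cF_r(v)$, through the chain used in Lemma~\ref{DGL3}. The bounds \Ref{DG-tilde-n-bnd} and~\Ref{DG-tilde-n-bnd-2} give an error $O(s\,m_0^{1+3s/2}/n)$. By~\Ref{rmr-bnd} this is at most $(m_0/n)c\,m_0^{2s}$.

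Summing the bounds of Steps 2--4 via the triangle inequality proves the lemma. The main obstacle is Step~2. Lemma~\ref{DGL2}'s constant must be checked to remain controlled when $m$ is replaced by the random, $\cF_r(v)$-measurable $m'$, and the threshold condition $n' \ge n_0(m',\th)$ must be verified. The truncation on $N_r(v)$ is exactly what keeps $m'_0$ bounded away from~$1$.
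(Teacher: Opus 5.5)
Your proposal is correct and follows the paper's proof essentially step for step: the chain $\hI(v';v) \le \hZ\und \le \hZ\un$ built from fresh arrays, Lemma~\ref{DGL2} applied within $G_v$ (with the constant kept uniform because $m'_0 \ge (m_0+1)/2$), the explicit matrix powers \Ref{DG-M-powers} for $\hZ\un - \hZ\und$, and the coupling chain of Lemma~\ref{DGL3} for $\hZ\un$ versus~$\tZ$. Your worry about needing $n' \ge n_0(m',\th)$ is unnecessary: Lemma~\ref{DGL2} carries no lower bound on the number of vertices, and the Poisson coupling is applied to $\hZ\un$, which is built with the original~$n \ge n_0$.
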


\begin{proof}
 To start with, as for Lemma~\ref{DGL1},
let $(V\il_{ijk},\,i,j,k \ge 1,\, l=1,2)$ be independent indicator random variables with
$\pr[V\il_{ijk}=1] = m_0/n$ for all $i,j,k,l$; write $\bV\il_{ijk} := V\il_{ijk}\be\ul$.  Let $(\bV\ih_{ijk}\,i,j,k \ge 1)$
be independent Bernoulli random $3$-vectors with distribution $\MN_0(1;m_1/n,m_1/n,m\th/n)$,
independent also of the $V\il_{ijk}$, $l=1,2$.
Then $U\il_{ij} := \sum_{k=1}^{n-1} V\il_{ijk}$, $i,j \ge 1$, $l=1,2$, are independent random variables with
distribution~$\Bi(n-1,m_0/n)$, and $\bU\ih_{ij} := \sum_{k=1}^{n-1} \bV\ih_{ijk}$, $i,j \ge 1$, are independent random
$3$-vectors with distribution $\MN_0(n-1;m_1/n,m_1/n,m\th/n)$, independent also of the $U\il_{ij}$, $l=1,2$.

Given~$\cF_r(v)$, set $n' := n - N_r(v)$ and $m' := n'm/n$, and then set $m'_0 := \half m'(1+\th)$ and $m'_1 := \half m'(1-\th)$;
note that $m'/n' = m/n$, that $m'_0/n' = m_0/n$ and that $m'_1/n' = m_1/n$.
Define $U\tl_{ij} := \sum_{k=1}^{n'-1} V\il_{ijk}$, $i,j \ge 1$, $l=1,2$;
these are conditionally independent random variables, given~$\cF_r(v)$, having distribution~$\Bi(n'-1,m'_0/n')$,
and $U\tl_{ij} \le U\il_{ij}$ for all $i,j$.  Then define
$\bU\thh_{ij} := \sum_{k=1}^{n'-1} \bV\ih_{ijk}$, $i,j \ge 1$; conditionally on~$\cF_r(v)$, these are independent random
$3$-vectors with distribution $\MN_0(n'-1;m'_1/n',m'_1/n',m'\th/n')$, and $\bU\thh_{ij} \le \bU\ih_{ij}$ for all $i,j$.
Define $\bU\il_{ij} := U\il_{ij}\be\ul$ and $\bU\tl_{ij} := U\tl_{ij}\be\ul$, $l=1,2$.

Conditional on~$\cF_r(v)$, use the random elements $\bU\tl_{ij}$, $1\le l\le 3$, to construct copies of $\hI(v';v)$ and
a $3$-type branching process~$\hZ\und$ together, as in the proof of \adbr{Lemma~\ref{DGL1}}, in such a way that,
\adbr{as in Lemma~\ref{DGL2},}
\eq\label{DG4.1}
    \ex\{\|\hZ_{s}\und - \hI_{s}(v';v)\|_1 \giv \cF_r(v)\} \Le (m_0/n)(m'_0)^{2s-1}\,c_{\ref{DGL2}}(m',\th), \qquad s \ge 1.
 \en
Note that the constant $c_{\ref{DGL2}}(m',\th)$ can be taken to be uniform in $m'_0 \le m_0$ \adbr{on the event}
$\{N_r(v) \le n(m_0-1)/(2m_0)\}$, since it then follows that $m'_0 \ge 1 + \d$ with $\d = \half(m_0-1) > 0$.
Then $\hZ\und$ can be coupled with the $3$-type branching process~$\hZ\un$ constructed, 
as in the proof of \adbr{Lemma~\ref{DGL1}},
using the random elements $\bU\il_{ij} \ge \bU\tl_{ij}$, $1\le l\le 3$;  thus $\hZ_s\un \ge \hZ\und_s$ a.s.\ for each $s \ge 0$.
Hence, much as in~\Ref{4.2},
\eq\label{DG4.2}
   \ex\{\|\hZ_{s}\un - \hZ_{s}\und\|_1 \giv \cF_r(v)\} \Le \Blb 1 - \Bl\frac{n'-1}{n-1}\Br^{s} \Brb \|(\be\uh)\TT M_n^{s} \|_1
          \Le \frac{sN_r(v)}n\,3m_0^{s}, \qquad s \ge 1.
\en
Finally, as in the proof of Lemma~\ref{DGL3}, the process~$\hZ\un$ can be coupled to a $3$-type branching process~$\tZ$,
distributed as~$Z$, in such a way that
\eq\label{DG4.3}
   \ex\|\tZ_{s} - \hZ\un_{s}\|_1 \Le (9sm_0/n) m_0^{3s/2}, \qquad s\ge1.
\en
Combining \Ref{DG4.1}--\Ref{DG4.3} establishes the lemma.
\end{proof}

\begin{remarkno}\label{DGR2}{\rm
 The same bound as in Lemma~\ref{DGL-new} can be used for the 
 expected differences $\ex\{\tI_s\so(v';v) - \hI_s\so(v';v) \giv \cF_r(v)\}$,
 where~$\tI_s\so(v';v) + I_s\sb(v';v) = N_s\so(v';v)$ is the size of the $s$-out-neighbourhood ring around the vertex~$v'$
 in~$G_v$;  and similarly for $\ex\{\tI_s\so(v';v) - \hI_s\so(v';v) \giv \cF_r(v)\}$.
 \adbr{This is because $(n')^{-1}(m'_0)^{2r} = n^{-1}m_0 (m'_0)^{2r-1} \le n^{-1}m_0^{2r}$.}} \hfill$\Box$
\end{remarkno}

Define 
\eq\label{DG-rn-def}
  r_n \Def \lfloor \half \log n/\log m_0 \rfloor \quad \mbox{and} \quad \chi_n \Def m_0^{-r_n}\sqrt n, 
\en
so that, in particular, $1 \le \chi_n \le m_0$. 

For $v' \notin \cN_r(v)$,  let $\tcI_l\so(v';v)$, $\tcI_l\si(v';v)$ and~$\cI_l\sb(v';v)$ denote the neighbourhood rings of~$v'$
in~$G^v$, defined in the same way as the neighbourhood rings $\cI_l\so(v)$, $\cI_l\si(v)$ and~$I_l\sb(v)$ of~$v$ were
defined in~$G$. Write
\[
  \cN_{s}\so(v';v) \Def \bigcup_{l=1}^s \bigl(\tcI_l\so(v';v) \cup \cI_l\sb(v';v)\bigr); \quad
  \cN_s\si(v';v) \Def \bigcup_{l=1}^s \bigl(\tcI_l\si(v';v) \cup \cI_l\sb(v';v)\bigr)
\]
for the corresponding neighbourhoods of~$v'$ in the graph~$G^v$. Then
the event~$A_{u_1,u_2}$, that the out-distance $D(v;v')$ between $v$ and~$v'$ is strictly greater than $2r_n+u_1$
and that the in-distance $D(v';v)$ is strictly greater than $2r_n+u_2$, is the
same as the event that there is no out- or bidirectional edge from
 $\cI_r\so(v)$ to~$\cN_{s_1}\si(v';v)$, and that there is no in- or bidirectional edge from
 $\cI_{r}\si(v)$ to~$\cN_{s_2}\so(v';v)$, where $r+s_1 = 2r_n+u_1-1$ and $r+s_2 = 2r_n + u_2-1$.
Since the edge indicators in the Bernoulli digraph are independent, the conditional probability of~$A_{u_1,u_2}$,
given \adbr{$\cF_r(v) \vee \tcF_{(s_1\vee s_2)}$}, 
is just $(1-m_0/n)^{E\un}$, where~$E\un$ is the \adbr{total} number of {\it distinct\/} potential 
edges between $\cI_r\so(v)$ and~$\cN_{s_1}\si(v';v)$, and 
between $\cI_{r}\si(v)$ and~$\cN_{s_2}\so(v';v)$.   Note that, because of the possible overlaps between 
$\cI_r\so(v)$ and~$\cI_{r}\si(v)$, an exact expression for $E\un$
would be complicated to write down.  However, we \adbr{recall} that the sets of 
vertices $\hcI_r\so(v)$, $\hcI_{r}\si(v)$ and~$\cI_r\sb(v)$ 
are distinct. 
We thus have
\eqa
    \lefteqn{\hI_{r}\so N_{s_1}\si(v';v) + I_r\sb(v)\card\bigl(\cN_{s_1}\si(v';v) \cup \cN_{s_2}\so(v';v)\bigr)
                   + \hI_{r}\si N_{s_2}\so(v';v) \Le E\un } \non\\
    &\le& 
     \tI_{r}\so N_{s_1}\si(v';v) + I_r\sb(v)\card\bigl(N_{s_1}\si(v';v) \cup N_{s_2}\so(v';v)\bigr) + \tI_{r}\si N_{s_2}\so(v';v). 
              \label{DG-edge-ineq}
\ena
\ignore{
and, in particular, that
\eq\label{DG-E1+E2}
   \hI_{r}\so N_{s}\si(v';v) + \hI_{r'}\si N_{s'}\so(v';v) \Le E\un_1 + E\un_2;
\en
note that, since $\hcI_{r}\so(v)$ and~$\hcI_{r}\si(v)$ are disjoint, the edges being counted on the left hand side
of~\Ref{DG-E1+E2} are all distinct.
}

By Lemma~\ref{DGL3}, $\hI_r\so(v)$, $\hI_{r}\si$ and~$I_r\sb(v)$ 
can be jointly
approximated, using the $3$-type branching process~$Z$, by~$Z_{1,r}$, $Z_{2,r}$ and~$Z_{3,r}$, 
and $\hI_r\so(v)$ and~$\hI_{r}\si$ are close to $\tI_r\so$ and~$\tI_{r}\si$ respectively, using also Lemma~\ref{DGL-new}.  
\ignore{
Then $I_r\so(v) = \tI_r\so(v) + I_r\sb(v)$ and $I_{r'}\si(v) = \tI_{r'}\si(v) + I_{r'}\sb(v)$, and the quantities
$I_r\sb(v)$ and~$I_{r'}\sb(v)$ are much smaller than $\tI_r\so(v)$ and~$\tI_{r'}\si(v)$, respectively.
Observe that the processes $\baZ_1 := (Z_{1,r} + Z_{3,r},\,r\ge0)$, $\baZ_2 := (Z_{2,r} + Z_{3,r},\,r\ge0)$ and 
$(Z_{3,r},\,r\ge0)$ are (dependent)
Galton--Watson processes starting from a single individual, $\baZ\ui$ and~$\baZ\ut$ having offspring distribution~$\Po(m_0)$, 
and~$\baZ\uh$ having offspring distribution~$\Po(m\th)$.  Similarly, define the processes
$\tbaZ^+_{1,s_2} := \tZ^+_{1,s_2}+ \tZ^+_{3,s_2}$ and~$\tbaZ^+_{2,s_1} := \tZ^+_{2,s_1}+ \tZ^+_{3,s_1}$, where $\tZ^+_t := \sum_{l=1}^t \tZ_l$.
}
Then
$N_{s_2}\so(v';v)$ and~$N_{s_1}\si(v';v)$ can be jointly approximated, in terms of the $3$-type branching process~$\tZ$,
using  Lemma~\ref{DGL4} and Remark~\ref{DGR2}.

As a result of these considerations, taking $u_1 \ge u_2$ without real loss of generality, and hence $s_1 \ge s_2$,
the unconditional probability of~$A_{u_1,u_2}$ can be well approximated by
\eq\label{DG-prob-approx}
   \ex\Bigl\{\Bigl(1- \frac{m_0}n \Bigr)^{Z(r,s_1,s_2)} \Bigr\},
\en
or by
\eq\label{DG-prob-appx-def}
  P_n(r,s_1,s_2) \Def \ex\bigl\{\exp\{-n^{-1}m_0 Z(r,s_1,s_2)\}\bigr\},
\en
where
\eq\label{DG-Zrss-def}
    Z(r,s_1,s_2) \Def Z_{1,r} (\tZ^+_{2,s_1} + \tZ^+_{3,s_1}) + Z_{3,r}(\tZ^+_{2,s_1} + \tZ^+_{3,s_1} + \tZ^+_{1,s_2}) 
            + Z_{2,r} (\tZ^+_{1,s_2} + \tZ^+_{3,s_2}).
\en
The expression in~\Ref{DG-Zrss-def} 
can then be approximated using limiting random variables associated with the branching processes $Z$ and~$\tZ$,
giving a neater approximation to $P_n(r,s_1,s_2)$.
The details are contained in the following theorem, whose proof has much the same structure
as that of Theorem~\ref{B-dist}. Frequent use is again made of the inequalities~\Ref{inequalities}.

In preparation, we note some asymptotic properties of the $3$-type branching process~$Z$.
First, the processes $\baZ_1 := (Z_{1,j} + Z_{3,j},\,j\ge0)$, $\baZ_2 := (Z_{2,j} + Z_{3,j},\,j\ge0)$ and
$(Z_{3,j},\,j\ge0)$ are (dependent)
Galton--Watson processes starting from a single individual, $\baZ_1$ and~$\baZ_2$ having offspring distribution~$\Po(m_0)$, 
and~$Z_3$ having offspring distribution~$\Po(m\th)$.  Thus,  if $m_0 > 1$,
it follows that $W^*_{1,j} := m_0^{-j}\baZ_{1,j}$ and $W^*_{2,j} := m_0^{-j}\baZ_{2,j}$ 
both converge a.s.\ to (dependent) limits
\eq\label{DG-W-lims}
   W^*_1 \Def \lim_{j\to\infty} W^*_{1,j} \quad\mbox{and}\quad W^*_2 \Def \lim_{j\to\infty} W^*_{2,j},
\en
each having mean~$1$.  If $m\th > 1$, the martingale $W_{3,j} := (m\th)^{-j}Z_{3,j}$ also converges a.s.\ to a limit~$W_3$
having mean~$1$; if $m\th \le 1$, 
$W_3=0$ a.s.
Note that, writing
\eq\label{DG-Wsp-def}
    \baZp_{j,i} \Def \sum_{l=0}^i \baZ_{j,l} \quad\mbox{and}\quad
           W\usp_{j,i} \Def m_0^{-i}\baZp_{j,i} \Eq \sum_{l=0}^i m_0^{-l}W^*_{j,i-l},\quad j=1,2,
\en
it follows also that, as $i \to \infty$,
\eq\label{DG-Wsp-lim}
     W\usp_{j,i} \ \to\ \Bmmio W^*_j \text{ a.s.}, \quad j=1,2;
\en
similarly, if $m\th > 1$, we have
\eq\label{DG-W3p-lim}
   W^+_{3,i} \Def  \sum_{l=0}^i (m\th)^{-l}W_{3,i-l} \ \to\ \Bmmith W_3 \text{ a.s.}
\en

The following second moment bounds can now be derived from
 \Ref{W-moms-r} and~\Ref{W-moms-1}: 
\begin{align}
   &\ex\{(W^*_{\jj,\ii} - W^*_{\jj})^2\} = \frac{m_0^{-\ii}}{m_0-1}\,,\ \jj=1,2;
       \quad &\ex\{(W_{3,\ii} - W_3)^2\} = \frac{(m\th)^{-\ii}}{m\th-1}\,, \ m\th > 1;
                 \label{DG-W-diff-square} \\
    &\ex\{(\tW\usp_{\jj,\ii})^2\} \Le 3\Bmmio^3,\ \jj=1,2;  \quad &\ex\{(\tW^+_{3,\ii})^2\} \Le 3\Bmmith^3, \ m\th > 1. \label{DG-Wsum-square}
\ignore{
   &\ex\Bigl\{\Bigl(\tW\usp_{\jj,\ii} - \sum_{l=0}^\ii m_0^{-(\ii-l)}\tW^*_{\jj}\Bigr)^2\Bigr\}
           \Le m_0^{-\ii}\,\frac{m_0(m_0+1)}{(m_0-1)^2}\,, \quad \jj=1,2; \label{DG-Wsum-diff-square} \\
    &\ex\Bigl\{\Bigl(\tW^+_{3,\ii} - \sum_{l=0}^\ii (m\th)^{-(\ii-l)}\tW_3\Bigr)^2\Bigr\}
           \Le (m\th)^{-\ii}\,\frac{m\th(m\th+1)}{(m\th-1)^2}\,, \quad m\th > 1. \phantom{XXXXXXXXXX}\label{DG-Wsum-diff-square-2}
}
\end{align}
We also note that
\eq\label{DG-Z3-mean}
      m_0^{-j}\ex\{Z_{3,j}\} \Eq \Bigl(\frac{m\th}{m_0}\Bigr)^j. 
\en

\begin{theorem}\label{DG-dist}
For any $n \ge n_0$, as defined in~\Ref{DG-n0-def},
let  $V_n$ and~$V'_n$ be a pair of vertices chosen independently and uniformly at random from the vertex set of
a Bernoulli digraph $G \sim \DG(n,m/n,\th)$. For $u_1,u_2 > -2r_n$, define
\[
    A_{u_1,u_2}\un \Def \{D(V_n;V_n') > 2r_n + u_1\} \cap \{D(V_n';V_n) > 2r_n + u_2\},
\]
where~$r_n$ is as defined in~\Ref{DG-rn-def}.   Let $(W^*_1,W^*_2,W_3)$ and $(\tW^*_1,\tW^*_2,\tW_3)$ be independent copies of
the a.s.\ limit as $j \to \infty$ of $\bigl(m_0^{-j}(Z_{1,j}+Z_{3,j}),m_0^{-j}(Z_{2,j}+Z_{3,j}), (m\th)^{-j}Z_{3,j}\bigr)$,
where~$Z$ is a $3$-type branching process with
initial state $(0,0,1)\TT$, having Poisson offspring distributions with mean matrix~$M$ given in~\Ref{M-matrix-def},
with independent offspring numbers of the different types.  Define
\eq\label{DG-epsn-def}
     \e_n \Def \e_n(m,\th) \Def (m\th/m_0)^{2r_n} \mathbf{1}_{\{m\th > \sqrt{m_0}\}}
\en
and
\eq\label{DG-hW-def}
     \hW(u_1,u_2,\e) \Def (m_0^{u_1-1}W^*_1 \tW^*_2  + m_0^{u_2-1} W^*_2\tW^*_1 )\Bmmio
              - \e_n (m\th)^{(u_1 \wedge u_2)-1} W_3 \tW_3 \Bmmith.
\en
Then
\eqs
   \bigl|\pr[A_{u_1,u_2}\un]
   - \ex\bigl\{\exp\{- (m_0/\chi_n^2) \hW(u_1,u_2,\e_n)\bigr\}\bigr|
   &\le& C(m,\th) m_0^{u^*/2}\max\{1,m^{u^*}\}n^{-1/2}\log n  
\ens
where $u^* := \max\{u_1,u_2\}$.
\ignore{
\[
    \a \Def \begin{cases}
             1 - \g'/2  & \text{ if }  m\th \le 1; \\ 
             1 - \g'    & \text{ if } m\th > 1,    
            \end{cases}
\]
with $\g' := \log(m\th)/\log m_0$.
}%
The constant $C(m,\th)$ 
can be chosen to be uniform in $m,\th$ such that $1 + \d \le m_0 \le \D$, for any $\d,\D > 0$.
\end{theorem}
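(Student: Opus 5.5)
The proof follows the two-stage structure of the proof of Theorem~\ref{B-dist}. As there, we may assume $u^* \le \log n/(2\log m_0)$, and we take $u_1 \ge u_2$. Set $t := \lfloor (u_2-1)/2\rfloor$, $r := r_n+t$, and $s_i := 2r_n+u_i-1-r$ for $i=1,2$, so that $s_1 \ge s_2$.

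\textbf{Stage one: reduction to $P_n(r,s_1,s_2)$.} First we bound $\pr[V'\in\cN_r(V)] \le n^{-1}\ex N_r(V)$, which is of order $m_0^t n^{-1/2}$ by \Ref{DG-N2-bnd}. On $\{V'\notin\cN_r(V)\}$, and on $\{N_r(V)\le n(m_0-1)/(2m_0)\}$, whose complement has probability $O(n^{-1})$ by Chebyshev and \Ref{DG-N2-bnd}, we condition on $\cF_r(V)$ and on the $v'$-neighbourhoods in $G_V$. The conditional probability of $A\un_{u_1,u_2}$ is $(1-m_0/n)^{E\un}$.

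We use the sandwich \Ref{DG-edge-ineq} together with \Ref{inequalities} to replace $E\un$ by $Z(r,s_1,s_2)$ of \Ref{DG-Zrss-def}. The cost is $(m_0/n)$ times the expected discrepancies, which fall into three groups.
\begin{itemize}
\item The terms $\tI\so_r-\hI\so_r$ (Lemma~\ref{DGL-new}) and $\hI_r(V)-Z_r$ (Lemma~\ref{DGL3}). Each is $O(n^{-1}m_0^{2r})$, and it is multiplied by the expected neighbourhood size of $v'$, which is $O(m_0^{s_1})$.
\item The $v'$-side discrepancies from Lemma~\ref{DGL4} and Remark~\ref{DGR2}, summed over $j\le s_1$. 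The term $3jN_r(v)m_0^j/n$, multiplied by $Z_r$ and summed, produces the $\log n$ factor. Here we use Cauchy--Schwarz with $\ex\{Z_{\cdot,r}N_r\}=O(m_0^{2r})$.
\item The counting of $\card(\cN\si_{s_1}\cup\cN\so_{s_2})$ versus $\tZ^+_{2,s_1}+\tZ^+_{3,s_1}+\tZ^+_{1,s_2}$. The overlap is exactly $\cN\sb_{s_2}$, so the sets $\hcI\so,\hcI\si,\cI\sb$ of the $v'$-process match the three summands; the error reduces to the differences already controlled.
\end{itemize}
Collecting these, and using $m_0^{r+s_1}\asymp \chi_n^{-2}nm_0^{u_1}$, gives
\[
\bigl|\pr[A\un_{u_1,u_2}]-P_n(r,s_1,s_2)\bigr| \le c(m,\th)\,m_0^{u^*/2}\max\{1,m^{u^*}\}\,n^{-1/2}\log n,
\]
exactly as in \Ref{B-1st-approx}.

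\textbf{Stage two: passage to the limits.} We rewrite $n^{-1}m_0Z(r,s_1,s_2)$ in the starred variables. Since $Z_1=\baZ_1-Z_3$ and $\tZ^+_2+\tZ^+_3=\tbaZ^+_2$,
\[
Z(r,s_1,s_2)=\baZ_{1,r}\tbaZ^+_{2,s_1}+\baZ_{2,r}\tbaZ^+_{1,s_2}-Z_{3,r}\tZ^+_{3,s_2}.
\]
The last term comes from the double-counted bidirectional part (the inclusion--exclusion applies only up to $s_2$). Dividing by $n=\chi_n^2m_0^{2r_n}$ turns the first two terms into $\chi_n^{-2}m_0^{u_i-1}W^*_{\cdot,r}\tW\usp_{\cdot,s_i}$. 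The third term becomes
\[
\chi_n^{-2}(m\th/m_0)^{2r_n}(m\th)^{u_2-1}\,W_{3,r}\tW^+_{3,s_2}.
\]
If $m\th\le\sqrt{m_0}$ this term has expectation $O(n^{-1/2}\log n\, m_0^{u_2/2})$ by \Ref{DG-Z3-mean}, since $(m\th/m_0)^{2r_n}\le m_0^{-r_n}$ (with a $\log$ factor at equality). It can then be dropped via $|e^{-a}-e^{-b}|\le|a-b|$, which explains the indicator in $\e_n$. If $m\th>\sqrt{m_0}>1$, it is kept with factor $\e_n$.

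We then replace each martingale by its limit using the telescoping decomposition \Ref{B-Dns-split} and the inequality \Ref{ineq-1}. The key device is to expand around a quantity measurable with respect to the earlier $\s$-field, so that the first-order term vanishes by the martingale property and the independence of $Z,\tZ$. The second-order terms are controlled by \Ref{DG-W-diff-square} and \Ref{DG-Wsum-square}, with $O(\log n)$ steps each of size $O(m_0^{3u^*/2}n^{-1/2})$. The end corrections are handled as in \Ref{W-moms-3}, using \Ref{DG-Wsp-lim} and \Ref{DG-W3p-lim}.

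\textbf{Main obstacle.} The hardest step will be the negative $W_3$ term. The exponent is then no longer a sum of nonnegative products, so \Ref{ineq-1} must be applied with $x,x'\ge0$ verified; this holds because the full expression $\hW$ equals a nonnegative count up to errors. Its martingale errors are also weighted by $(m\th)^{-i}$ rather than $m_0^{-i}$. I expect this to be manageable because $\e_n(m\th)^{2r_n}$ only rescales the bounds by $m_0^{2r_n}/(m\th)^{2r_n}$. The remaining work is to check that the constant stays uniform in $m_0\in[1+\d,\D]$, in particular as $m\th\downarrow\sqrt{m_0}$, where $m\th/(m\th-1)$ remains bounded since $\sqrt{m_0}\ge\sqrt{1+\d}$.
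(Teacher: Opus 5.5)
Your overall route is the paper's: the sandwich \Ref{DG-edge-ineq} together with Lemmas~\ref{DGL3}, \ref{DGL-new} and~\ref{DGL4} to reach $P_n(r,s_1,s_2)$; then the rewriting $Z(r,s_1,s_2)=\baZ_{1,r}\tbaZp_{2,s_1}+\baZ_{2,r}\tbaZp_{1,s_2}-Z_{3,r}\tZ^+_{3,s_2}$, dropping the last term when $m\th\le\sqrt{m_0}$; then martingale telescoping with \Ref{ineq-1}. However, your choice of radii destroys the uniformity of the bound.

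You balance around the \emph{smaller} exponent, $t=\lfloor(u_2-1)/2\rfloor$, and then argue as if only $r+s_1=2r_n+u_1-1$ mattered. It does not: errors carrying the weight $m_0^{u_1}$ depend on $r$ and $s_1$ separately.
\begin{itemize}
\item The first term of Lemma~\ref{DGL4}, summed up to level $s_1$ and multiplied by $(m_0/n)\ex\hI\so_r(V)$, is of order $n^{-2}m_0^{r+2s_1}\asymp n^{-1/2}m_0^{2u_1-t}$.
\item In stage two, replacing $W^*_{1,r}$ by $W^*_1$ costs, by \Ref{DG-W-diff-square} and~\Ref{DG-Wsum-square}, order $m_0^{2u_1-r}\asymp n^{-1/2}m_0^{2u_1-t}$.
\end{itemize}
With $t\approx u_2/2$ both exceed the target $n^{-1/2}m_0^{3u_1/2}$ by a factor $m_0^{(u_1-u_2)/2}$. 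This factor is unbounded, since $u_2$ may be as small as $-2r_n+1$ with $u_1$ fixed.

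For instance, $u_1=0$, $u_2=-2r_n+1$ gives $r=0$ and $s_1=2r_n-1$. The in-neighbourhood of $V'$ would then have to be approximated by a branching process out to size of order~$n$. Your bound is $O(1)$, not the claimed $O(n^{-1/2}\log n)$ with $C(m,\th)$ independent of $u_1,u_2$.

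The remedy is the paper's choice \Ref{ADB-A2}--\Ref{DG-rss-def}: balance around $u^*=u_1$, i.e.\ $t=\lfloor(u_1-1)/2\rfloor$, so that $r\approx s_1\approx r_n+u_1/2$. The $u_2$-terms are then no worse, because $m_0^{u_2}\le m_0^{u_1}$, $s_2\le s_1$ and $u_2-s_2=u_1-s_1$. If $s_2<0$, the in-distance event holds automatically on $\{V'\notin\cN_r(V)\}$, and the corresponding term of $\hW$ costs only $O(m_0^{u_1/2}n^{-1/2})$.

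Separately, your treatment of what you call the main obstacle is not an argument. \Ref{ineq-1} fails for negative arguments (take $x=y=0$, $x'=-L$ with $L$ large), so pointwise nonnegativity is needed. The statement that $\hW$ `equals a nonnegative count up to errors' does not supply it.

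When $m\th>\sqrt{m_0}$, the exponent $n^{-1}m_0Z(r,s_1,s_2)$ is nonnegative, but the limit-based exponents need not be. The inequality $W^*_{2,r}\ge(m\th/m_0)^rW_{3,r}$ holds, yet $W^*_2<(m\th/m_0)^rW_3$ with positive probability: a long single line of type-3 individuals inflates $W_3$ relative to $W^*_2$, since $m\th<m_0$ for $\th<1$. In the same way $\hW$ itself can be negative. The paper's proof applies \Ref{ineq-1} at the corresponding places without comment, so this is not where you depart from it, but you should not present it as settled.
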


\begin{remarkno}\label{DG-thm-rk}
{\rm
The quantity $(m\th/m_0)^{2r_n}$ is of order $O(n^{-1/2})$ if $m\th \le \sqrt{m_0}$.  This explains
why the term involving the product $W_3 \tW_3$ can be dispensed with in the exponent, if $1 < m\th \le \sqrt{m_0}$;
its contribution is of no larger order than the remaining error.
If $m\th \le 1$, the product is in any case equal to zero.

 If $u_2$ is taken to be large and negative in the approximation given in Theorem~\ref{DG-dist}, the resulting
approximation to the distribution of $D(V;V')$ is the same as that for~$D(V,V')$ in the Bernoulli random graph $\cG(n,m_0/n)$,
as it should be.   If $\th = 1$, \adbr{entailing $m_0=m$ and $\e_n = 1$,}
it follows that $W^*_1 = W^*_2 = W_3$ a.s., in which case, writing $W$ for~$W_3$ and $\tW$
for~$\tW_3$, we have
\[
   (m_0/\chi_n^2) \hW(u_1,u_2,\e) \Eq m(m^{(u_1 \vee u_2)} W \tW)/\{(m-1)\chi_n^2\}.
\]
This implies that the limiting distribution of inter-point distances is concentrated on the diagonal $u_1 = u_2$,
and, since the Bienaym\'e--Galton--Watson process~$Z_{3,\cdot}$ has offspring distribution~$\Po(m)$ when $\th=1$,
the marginal distribution is the same as that for the model $\cG(n,m/n)$, again as it should be.

For $Z_0 := (0,0,1)\TT$, we have 
$\var W^*_1 = \var W^*_2 = 1/(m_0-1)$, from~\Ref{W-moms-1}.  Then, by the independence of lines of descent in the
branching process,
\[
    \law\bigl((W^*_1,W^*_2) \giv Z_1\bigr)
      \Eq \law \Bigl(m_0^{-1}\Bigl\{\sum_{l=1}^{Z_{11}} (W^{[1]}_{1,l},W^{[1]}_{2,l})
                  + \sum_{l=1}^{Z_{12}} (W^{[2]}_{1,l},W^{[2]}_{2,l}) + \sum_{l=1}^{Z_{13}} (W^{[3]}_{1,l},W^{[3]}_{2,l})
                         \Bigr\}\Bigr),
\]
where, for each~$k,l$, $\law\bigl((W^{[k]}_{1,l},W^{[k]}_{2,l})\bigr) = \law\bigl((W^*_1,W^*_2) \giv Z_0=\be^{(k)}\bigr)$,
and the pairs $(W^{[k]}_{1,l},W^{[k]}_{2,l})$ are mutually independent.
Hence
\eqs
   C_{12} &:=& \cov(W^*_1,W^*_2) \Eq \ex\bigl\{\cov(W^*_1,W^*_2 \giv Z_1)\bigr\}
             + \cov\bigl(\ex\{W_1^*\giv Z_1\},\ex\{W_1^*\giv Z_1\}\bigr) \\
       &=& \ex\{Z_{13}m_0^{-2} C_{12}\} + m_0^{-2}\cov(Z_{11} + Z_{13}, Z_{12} + Z_{13})
       \Eq m\th(C_{12} + 1)/m_0^2,
\ens
from which it follows that
\[
    \corr(W^*_1,W^*_2) \Eq \frac{\g(m_0-1)}{m_0 - \g}\,,\quad\mbox{where}\quad 0 \le \g := m\th/m_0 \le 1.
\]
Analogously, if $m\th > 1$, we have $\var W_3 = 1/(m\th-1)$ and
\[
  \hskip1in \corr(W^*_1,W_3) \Eq \corr(W^*_2,W_3) \Eq \sqrt{\frac{m\th - 1}{m_0-1}}\,. \hskip1.5in \Box
\]
}
\end{remarkno}

\begin{proof}
Suppressing the index~$n$ where possible, let $\tcF_r := \s(V,V')\vee \cF_r(V)$.
We begin by taking $u_1 \ge u_2 > - 2r_n$, assuming that $u_1 \le \half\log n/\log m_0$, since the inequality in the theorem is
immediately true otherwise, with $C(m,\th) = 1$.
For the event $A_{u_1,u_2}$ that $D(V;V') > 2r_n+u_1$ and $D(V';V) > 2r_n+u_2$,
define
\eq\label{ADB-A2}
  t \Def \lfloor (u_1-1)/2\rfloor, \quad t_1 \Def u_1 - t - 1 \quad\mbox{and}\quad 
  t_2 \Def u_2 - t - 1,
\en
noting that then $t_1 \ge t_2$; 
write
\eq\label{DG-rss-def}
  r \Def r_n + t; \quad s_1 \Def r_n + t_1 ; \quad s_2 \Def  r_n + t_2.
\en

In order to reach the approximation given in~\Ref{DG-prob-approx} for the probability $\pr[A_{u_1,u_2}]$,
let $N_{s_1,s_2}(V';V) := \card(N_{s_1}\si(V';V) \cup N_{s_2}\so(V';V))$, and set $B_r\ui := \{V' \notin \cN_r(V)\}$.
We first observe, from \adbr{\Ref{inequalities} and}~\Ref{DG-edge-ineq}, that, on the event~$B_r\ui$, we have
\eqa
0 &\le& \Bigl(1 - \frac{m_0}n\Bigr)^{ \hI_{r}\so N_{s_1}\si(V';V) +  I_r\sb(V)N_{s_1,s_2}(V';V)
                       + \hI_{r}\si N_{s_2}\so(V';V) }
                          - \Bigl(1 - \frac{m_0}n\Bigr)^{E\un} \non  \\
  &\le& \frac{m_0}n\,\Bigl(\{\tI_r\so(V) - \hI_r\so(V)\} N_{s_1}\si(V';V)
                    + \{\tI_{r}\si(V) - \hI_{r}\si(V)\} N_{s_2}\so(V';V) \Bigr). \phantom{XXX}
        \label{DG-E1-E2-sandy}                      
\ena
\ignore{
\eqa
  0 &\le& \Bigl(1 - \frac{m_0}n\Bigr)^{\hI_r\so(V) N_s\si(V';V) + \hI_{r'}\si(V) N_{s_2}\so(V';V)} 
         - \Bigl(1 - \frac{m_0}n\Bigr)^{E\un_1 + E\un_2} \non  \\
    &\le& \frac{m_0}n\,\Bigl(\{\tI_r\so(V) + I_r\sb(V) - \hI_r\so(V)\} N_s\si(V';V) + \{\tI_{r'}\si(V) + I_{r'}\sb(V) - \hI_{r'}\si(V)\} N_{s_2}\so(V';V) \Bigr).
        \label{DG-E1-E2-sandy}
\ena
}%
Now, conditional on~$\tcF_r \cap B_r\ui$, with the coupling of Lemma~\ref{DGL4},
$\tI_l\si(V';V)\} \le \hZ\un_{2,l} + \hZ\un_{3,l}$ a.s., \adbr{$0 \le l \le s_1$,}
and hence 
\[
    \ex\{N_{s_1}\si(V';V)\giv \tcF_r\}I[B_r\ui] \Le \sum_{l=1}^{s_1} m_0^l \Le m_0^{s_1} \Bmmio \quad \mbox{a.s.}; 
\]
analogously, $\ex\{N_{s_2}\so(V';V) \giv \cF_r\}I[B_r\ui] \le m_0^{s_2} \Bmmio$ a.s.\ also. 
Then, from Lemma~\ref{DGL-new},
\[
    \ex\{\tI_r\so(V) - \hI_r\so(V)\} \Le n^{-1}m_0^{2r}\,c_{\Ref{DGL-new}}(m,\th),
\]
and the same bound holds for $\ex\{\tI_{r}\si(V) - \hI_{r}\si(V)\}$.  
Combining these observations, it follows that
\eqa
  0 &\le& \ex\Bigl\{\Bigl(1 - \frac{m_0}n\Bigr)^{\hI_{r}\so(V) N_{s_1}\si(V';V) +  I_r\sb(V)N_{s_1,s_2}(V';V) 
            + \hI_{r}\si(V) N_{s_2}\so(V';V)}
         - \ex\Bigl\{\Bigl(1 - \frac{m_0}n\Bigr)^{E\un}\Bigr\} \non  \\
    &\le& \pr[(B_r\ui)^c] + \frac{m_0}n\,\Bigl(n^{-1}(m_0^{2r+s_1} + m_0^{2r+s_2})\,c_{\ref{DGL-new}}(m,\th)\Bmmio\Bigr) \non\\
    &\le& \pr[(B_r\ui)^c] + n^{-2}m_0^{3r_n+u+t_1}\,c_{\ref{DG-appx-1}}(m,\th),
        \label{DG-appx-1}
\ena
for a suitable constant $c_{\ref{DG-appx-1}}(m,\th)$.

Define the processes $\tbaZ_1$ and~$\tbaZ_2$ by $\tbaZ_{1,l} := \tZ_{1,l} + \tZ_{3,l}$ and~$\tbaZ_{2,l} := \tZ_{2,l}+ \tZ_{3,l}$,
\adbr{where~$\tZ$ is the $3$-type branching process, having the same distribution as~$Z$, introduced in Lemma~\ref{DGL4};}
write $\tbaZp_{i,t} := \sum_{l=1}^t \tbaZ_{i,l}$, $i = 1,2$.
The next step is to examine 
\eqs
   N_{s_1}\si(V';V) - \tbaZp_{2,s_1} &=& \sum_{l=1}^{s_1} (\tI_l\si(V';V) + I_l\sb(V';V) - \tbaZ_{2,l}) \\
        &=& \sum_{l=1}^{s_1} \{(\tI_l\si(V';V) - \hI_l\si(V';V)) + (\hI_l\si(V';V) + I_l\sb(V';V) - \tbaZ_{2,l})\}.
\ens
From Lemma~\ref{DGL-new} and Remark~\ref{DGR2}, for $B_r := B_r\ui \cap \{N_r(V) \le n(m_0-1)/(2m_0)\}$, it follows that
\[
    \ex\{\tI_l\si(V';V) - \hI_l\si(V';V) \giv \tcF_r\} I[B_r]  \Le n^{-1}m_0^{2l}c_{\ref{DGL-new}}(m,\th) \ \mbox{a.s.}
\]
Then, from Lemma~\ref{DGL4},  we have
\[
    \ex\{\|\tZ_{l} - \hI_{l}(V';V)\|_1 \giv \tcF_r\}I[B_r] \Le n^{-1}m_0 c_{\ref{DGL4}}(m,\th) m_0^{2l} +  (3lN_r(V)/n) m_0^{l}.
\]
Hence
\eq \label{DG-NVV-bnd}
    \ex\{|N_{s_1}\si(V';V) - \tbaZp_{2,s_1}| \giv \tcF_r\}I[B_r]  \Le \h_{s_1},
\en
where
\eq\label{DG-h-def}
   \h_s \Def n^{-1}m_0^{2s_1}c_{\Ref{DG-h-def}} + 3sn^{-1}N_r(V) m_0^s \Bmmio\,,
\en
and
\eq\label{DG-h-def-2}
   c_{\ref{DG-h-def}} \Def c_{\ref{DG-h-def}}(m,\th)
            \Def m_0^2\,\frac{c_{\ref{DGL-new}}(m,\th) + m_0 c_{\ref{DGL4}}(m,\th)}{m_0^2 - 1}
\en
is a suitable constant.
The same bound, with $s_2$ for~$s_1$, holds for the expected difference
$\ex\{|N_{s_2}\so(V';V) - \tbaZp_{1,s_2}| \giv \tcF_r\}I[B_r]$.
Then we have
\[
N_{s_1,s_2}(V';V) \Eq N_{s_1}\si(V';V) + \card(\cN_{s_2}\so(V';V) \setminus \cN_{s_1}\si(V';V)),
\]
with
\[
     \cN_{s_2}\so(V';V) \setminus \cN_{s_1}\si(V';V)  \Eq \tcN_{s_2}\so(V';V) \Eq \bigcup_{l=1}^{s_2} \tcI_{l}\so(V';V).
\]
Thus it follows that
\[
    |N_{s_1,s_2}(V';V) - \tbaZp_{2,s_1} - \tZ^+_{1,s_2}| \Le |N_{s_1}\si(V';V) - \tbaZp_{2,s_1}| + \sum_{l=1}^{s_2} |\tI_{l}\so(V';V) -  \tZ_{1,l}|;
\]
\ignore{
\eqs
    \Bigl|\tN_{s'}\so(V';V) - \sum_{l=1}^{s'} \tZ_{1,l}\Bigr| &\le& \sum_{l=1}^{s'} |\tI_{l}\so(V';V) -  \tZ_{1,l}|,
\ens
}
taking expectations, using~\Ref{DG-NVV-bnd} together with
 Lemmas \ref{DGL-new} and~\ref{DGL4} and Remark~\ref{DGR2}, this gives
\eqa
    \ex\{|N_{s_1,s_2}(V';V) - \tbaZp_{2,s_1} - \tZ^+_{1,s_2}| \giv \tcF_r\}I[B_r] 
       &\le& \h_{s_1} + \h_{s_2}. \label{Nss-bnd}
\ena
Combining the results of \Ref{DG-NVV-bnd} and~\Ref{Nss-bnd}, we have shown that
\eqa
\lefteqn{\Bigl|\ex\Bigl\{\Bigl(1 - \frac{m_0}n\Bigr)^{\hI_{r}\so N_{s_1}\si(V';V) + I_r\sb(V)N_{s_1,s_2}(V';V)
            + \hI_{r}\si N_{s_2}\so(V';V)} \Giv \tcF_r\Bigr\} } \non\\
   &&\qquad\mbox{}  -   \ex\Bigl\{\Bigl(1 - \frac{m_0}n\Bigr)^{\hI_{r}\so(V) \tbaZp_{2,s_1} +   I_r\sb(V)(\tbaZp_{2,s_1} + \tZ^+_{1,s_2}) 
            + \hI_{r}\si(V) \tbaZp_{1,s_2}} \Giv \tcF_r\Bigr\} \Bigr|  I[B_r] \non\\
    &&\Le n^{-1}m_0\,\{\hI_{r}\so(V) \h_{s_1} + I_r\sb(V)(\h_{s_1} + \h_{s_2}) + \hI_{r}\si(V) \h_{s_2}\} \phantom{XXXXXXXXXXX} \non\\
    &&\Le n^{-1}m_0\,\{N_{r}\so(V) \h_{s_1} + N_r\si(V)\h_{s_2}\}.
        \label{DG-next-bnd}
\ena
Using~\Ref{N-squared-bnd} applied to the out- and in-neighbourhood processes, together with the Cauchy--Schwarz inequality,
we have   
\[
    \ex\{N_{r}\so(V) N_r(V)\} \Le 2m_0^{2r}\Bmmio^3.
\]
Taking unconditional expectations, and recalling that $s_2 \le s_1$, \Ref{DG-next-bnd} thus implies that
\eqa
\lefteqn{\biggl|\ex\Bigl\{\Bigl(1 - \frac{m_0}n\Bigr)^{\hI_{r}\so N_{s_1}\si(V';V) + I_r\sb(V)N_{s_1,s_2}(V';V)
            + \hI_{r}\si N_{s_2}\so(V';V)} \Bigr\} } \non\\
   &&\qquad\mbox{}  -   \ex\Bigl\{\Bigl(1 - \frac{m_0}n\Bigr)^{\hI_{r}\so(V) \tbaZp_{2,s_1} +   I_r\sb(V)(\tbaZp_{2,s_1} + \tZ^+_{1,s_2}) 
            + \hI_{r}\si(V) \tbaZp_{1,s_2}} \Bigr\} \biggr|  \non\\
    &&\Le \pr[(B_r\ui)^c] + \adbr{m_0^{u+t_1}}\,c_{\ref{DG-appx-2}}(m,\th) n^{-1/2}\log n,  \label{DG-appx-2}
\ena
for a suitable constant $c_{\ref{DG-appx-2}}(m,\th)$.

The next step is to replace $\hI_{r}\so(V)$, $I_r\sb(V)$ and~$\hI_{r}\si(V)$ in the exponent of the approximation~\Ref{DG-appx-2}
by the quantities $Z_{1,r}$, $Z_{3,r}$ and~$Z_{2,r}$ respectively.  This can be easily achieved, using Lemma~\ref{DGL3}
and the moments of the process~$\tbaZp$, \adbr{introduced following~\Ref{DG-appx-1},} giving
\eqa
 \lefteqn{\biggl|\ex\Bigl\{\Bigl(1 - \frac{m_0}n\Bigr)^{\hI_{r}\so(V) \tbaZp_{2,s_1} +   I_r\sb(V)(\tbaZp_{2,s_1} + \tZ^+_{1,s_2}) 
            + \hI_{r}\si(V) \tbaZp_{1,s_2}} \Bigr\} 
             - \ex\Bigl\{\Bigl(1 - \frac{m_0}n\Bigr)^{Z(r,s_1,s_2)} \Bigr\} \biggr| }\non\\
    &&\Le  n^{-2}m_0^{3r_n+u_1+t_1}\,c_{\ref{DG-appx-3}}(m,\th), \phantom{XXXXXXXXXXXXXXXXXXXX}  \label{DG-appx-3}
\ena
for a suitable constant $c_{\ref{DG-appx-3}}(m,\th)$.  Then it follows from~\Ref{inequalities} that
\eq\label{DG-appx-4}
  \biggl|\ex\Bigl\{\Bigl(1 - \frac{m_0}n\Bigr)^{Z(r,s_1,s_2)} \Bigr\} 
   -   \ex\Bigl\{\exp\{-n^{-1}m_0 Z(r,s_1,s_2)\} \Bigr\} \biggr| 
   \Le  2n^{-1}m_0.  
\en
Finally, it follows \adbr{using} \Ref{N-squared-bnd} that 
\eq\label{DG-B-bnd}
     \pr[(B_r\ui)^c]  \Eq n^{-1}\ex N_r(v) \Le n^{-1}m_0^{r_n+u}c_{\ref{DG-B-bnd}}(m,\th),
\en
for a suitable constant~$c_{\ref{DG-B-bnd}}(m,\th)$.

Collecting \Ref{DG-appx-1}, \Ref{DG-appx-2}, \Ref{DG-appx-3} and~\Ref{DG-appx-4}, and using~\Ref{DG-B-bnd}, 
it thus follows that
\eq\label{DG-main-bnd-1}
   \bigl| \pr[A_{u_1,u_2}] - \ex\bigl\{\exp\{-n^{-1}m_0 Z(r,s_1,s_2)\} \bigr\} \bigr|
      \Le C_0(m,\th)  m_0^{u/2}\max\{1,m_0^{u}\} n^{-1/2}\log n,
\en
for a suitable constant~$C_0(m,\th)$.

Replacing $n^{-1}m_0 Z(r,s_1,s_2)$ in the probability $\ex\bigl\{\exp\{-n^{-1}m_0 Z(r,s_1,s_2)\} \bigr\}$ by the expression
$(m_0/\chi_n^2) \hW(u_1,u_2,\e_n)$, which involves only the limiting random variables
$W^*_1$, $W^*_2$, $W_3$, $\tW^*_1$, $\tW^*_2$ and~$\tW_3$, is achieved in much the same way as for the Bernoulli random graph,
though the details are more complicated.
With $\tW\usp_{\jj,s} := m_0^{-s}(\tZ^+_{\jj,s} + \tZ^+_{3,s})$, $\jj=1,2$,
the exponent
\eqa
  \lefteqn{ m_0^{-2r_n} Z(r,s_1,s_2) }\non\\
  &&\Eq m_0^{-2r_n}\{ Z_{1,r} (\tZ^+_{2,s_1} + \tZ^+_{3,s_1}) + Z_{3,r}(\tZ^+_{2,s_1} + \tZ^+_{3,s_1} + \tZ^+_{1,s_2})
            + Z_{2,r} (\tZ^+_{1,s_2} + \tZ^+_{3,s_2})\}  \non\\
    &&\Eq   m_0^t W^*_{1,r}m_0^{t_1}\tW\usp_{2,s_1} 
        +  m_0^t W^*_{2,r}m_0^{t_2}\tW\usp_{1,s_2}  
          - m_0^{-2r_n}Z_{3,r}\tZ^+_{3,s_2} \phantom{XXXXXXX}  \non \\
    &&\Eq   m_0^{u_1-1} W^*_{1,r}\tW\usp_{2,s_1} 
        +  m_0^{u_2-1} W^*_{2,r}\tW\usp_{1,s_2}  
          - m_0^{-2r_n}Z_{3,r}\tZ^+_{3,s_2}   \ =:\ \baW_n(r,s_1,s_2) \label{DG-baW-def}
\ena
is approximated accurately enough, if $m\th \le \sqrt{m_0}$, by
its limit; \adbr{see Remark~\ref{DG-thm-rk}.  The limit,}
using \Ref{DG-W-lims}, \Ref{DG-Wsp-lim}, \Ref{DG-W3p-lim} and~\Ref{DG-rss-def}, is given by
\[
    (m_0^{u_1-1}W^*_1\tW^*_2 + m_0^{u_2-1}W^*_2\tW^*_1)\Bmmio.
\]
However, if $m\th > \sqrt{m_0}$, in order to obtain the same asymptotic accuracy of approximation as
in~\Ref{DG-main-bnd-1}, it is necessary in addition to approximate
\[
    m_0^{-2r_n}Z_{3,r}\tZ^+_{3,s_2} \Eq (m\th/m_0)^{2r_n} (m\th)^{u_2-1}W_{3,r}\tW^+_{3,s_2}
\]
using $W_3$ and~$m\th\tW_3/(m\th-1)$ in place of $W_{3,r}$ and~$\tW^+_{3,s_2}$, respectively, obtaining
$\hW(u_1,u_2,\e_n)$, \adbr{defined in~\Ref{DG-hW-def},} as an approximation to $\baW_n(r,s_1,s_2)$, or, equivalently,
$(m_0/\chi_n^2) \hW(u_1,u_2,\e_n)$ as an approximation to $n^{-1}m_0 Z(r,s_1,s_2)$ in~\Ref{DG-main-bnd-1}.

To make the comparison, write $T_l(x) := \sum_{i=0}^{l-1}x^{-i}$, and, for $j = 1,2$, define
\eq
    X_l\uj(s) \Def  T_l(m_0)\tW^*_j + \sum_{i=l}^s m_0^{-i}\tW^*_{j,s-i}; \quad 
    V_l\uj(s) \Def  T_l(m_0)\tW^*_{j,s-l} + \sum_{i=l}^s m_0^{-i}\tW^*_{j,s-i}, \label{DG-XV-def}
\en
then setting
\eq\label{DG-XV3-def}
   X_l\uh(s) \Eq  T_l(m\th)\tW_3 + \sum_{i=l}^s m_0^{-i}\tW_{3,s-i}; \quad 
    V_l\uh(s) \Def  T_l(m\th)\tW_{3,s-l} + \sum_{i=l}^s m_0^{-i}\tW_{3,s-i}.
\en
Note that $X_0\uj(s) = \sum_{i=0}^s m_0^{-i}\tW^*_{j,s-i} = \tW\usp_{j,s}$ and that $X_{s+1}\uj(s) = T_{s+1}(m_0)\tW^*_j$,
and similarly that $X_0\uh(s) = \sum_{i=0}^s (m\th)^{-i}\tW_{3,s-i} = \tW^+_{3,s}$ and that $X_{s+1}\uh(s) = T_{s+1}(m\th)\tW_3$.
Note further that, for $j = 1,2$ \adbr{and $0 \le l \le s$},
\eq\label{DG-XV-diffs}
   X_{l+1}\uj(s) - X_l\uj(s) \Eq m_0^{-l}(\tW^*_j - \tW^*_{j,s-l}); \quad 
   X_{l}\uj(s) - V_l\uj(s) \Eq T_l(m_0)(\tW^*_j - \tW^*_{j,s-l}),
\en
and that
\eq\label{DG-XV3-diffs}
   X_{l+1}\uh(s) - X_l\uh(s) \Eq (m\th)^{-l}(\tW_3 - \tW_{3,s-l}); \quad 
   X_{l}\uh(s) - V_l\uh(s) \Eq T_l(m\th)(\tW_3 - \tW_{3,s-l}).
\en
Using these expressions, \adbr{and recalling \Ref{DG-hW-def} and~\Ref{DG-baW-def},}
the difference $\baW_n(r,s_1,s_2) - \hW(u_1,u_2,\e_n)$ can be written as
\eqa
\lefteqn{\baW_n(r,s_1,s_2) - \hW(u_1,u_2,\e_n)} \non\\
 &&\Eq \{m_0^{u_1-1} W^*_{1,r}\tW\usp_{2,s_1}  +  m_0^{u_2-1} W^*_{2,r}\tW\usp_{1,s_2} - m_0^{-2r_n}Z_{3,r}\tZ^+_{3,s_2}\} \non\\
 &&\qquad\qquad\mbox{} - \{m_0^{u_1-1} W^*_{1,r}\tW\usp_{2,s_1}  +  m_0^{u_2-1} W^*_{2,r}\tW\usp_{1,s_2}
                     - \e_n (m\th)^{u_2-1}W_{3,r} \tW^+_{3,s_2} \} \label{DG-limit-1}\\
  &&\quad\mbox{} + \{m_0^{u_1-1} W^*_{1,r}\tW\usp_{2,s_1}  +  m_0^{u_2-1} W^*_{2,r}\tW\usp_{1,s_2}
                     - \e_n (m\th)^{u_2-1}W_{3,r} \tW^+_{3,s_2} \} \non\\
  &&\qquad\qquad\mbox{} - \{m_0^{u_1-1} W^*_1\tW\usp_{2,s_1}  +  m_0^{u_2-1} W^*_2\tW\usp_{1,s_2}
                     - \e_n (m\th)^{u_2-1}W_3 \tW^+_{3,s_2} \} \label{DG-limit-2}\\
  &&\quad\mbox{} + \sum_{l=0}^{s_2} \bigl\{ m_0^{u_1-1} W^*_1 \adbr{(X_{l+s_1-s_2}\ut(s_1) - X_{l+1+s_1-s_2}\ut(s_1))}
                  + m_0^{u_2-1} W^*_2(X_l\ui(s_2) - X_{l+1}\ui(s_2)) \non\\
    &&\qquad\qquad\mbox{}              - \e_n(m\th)^{u_2-1}W_3(X_l\uh(s_2) - X_{l+1}\uh(s_2))\bigr\}
             \label{DG-limit-3}\\
  &&\quad\mbox{}      + \adbb{\sum_{l=0}^{s_1-s_2-1}} m_0^{u_1-1} W^*_1 (X_l\ut(s_1) - X_{l+1}\ut(s_1)) \label{DG-limit-4}\\
  &&\quad\mbox{}      - m_0^{u_1-1}W^*_1\tW^*_2\sum_{i\ge s_1+1}m_0^{-i}  - m_0^{u_2-1}W^*_2\tW^*_1 \sum_{i\ge s_2+1}m_0^{-i}\non\\
   &&\qquad\qquad\mbox{}               + \e_n(m\th)^{u_2-1}W_3\tW_3 \sum_{i \ge s_2+1}(m\th)^{-i}.\label{DG-limit-5}
\ena
This representation is the basis for the estimates to come.
Defining
\eq\label{DG-X*-def}
   X_l^* \Def m_0^{u_1-1} W^*_1 \adbr{X_{l+s_1-s_2}\ut(s_1)}  + m_0^{u_2-1} W^*_2 X_l\ui(s_2) - \e_n (m\th)^{u_2-1} W_3 X_l\uh(s_2),
            \quad 0 \le l \le s_2,
\en
we can write
\eqa
  \D(r,s_1,s_2)   
  &:=& \exp\{-n^{-1}m_0 Z(r,s_1,s_2)\} - \exp\{-(m_0/\chi_n^2)\hW(u_1,u_2,\e_n)\} \non\\
   &=& \sum_{l=1}^5 \D\ul(r,s_1,s_2),\phantom{XXXXXXXXXXXXXXXXXXX} \label{DG-0}
\ena
where
\eqa
  \lefteqn{\D\ui(r,s_1,s_2)} \non\\
      &:=& \exp\{-(m_0/\chi_n^2)(m_0^{u_1-1} W^*_{1,r}\tW\usp_{2,s_1} +  m_0^{u_2-1} W^*_{2,r}\tW\usp_{1,s_2})
           - n^{-1} m_0 Z_{3,r}\tZ^+_{3,s_2}\} \phantom{XXXXXXXX}\non \\
   &&\mbox{} - \exp\{-(m_0/\chi_n^2)(m_0^{u_1-1} W^*_{1,r}\tW\usp_{2,s_1} +  m_0^{u_2-1} W^*_{2,r}\tW\usp_{1,s_2}
                   - \e_n (m\th)^{u_2-1} W_{3,r}\tW^+_{3,s_2})\},\label{DG-1}
\ena
corresponding to the difference~\Ref{DG-limit-1};
\eqa
 \lefteqn{\D\ut(r,s_1,s_2)} \non\\
 &:=& \exp\{-(m_0/\chi_n^2)(m_0^{u_1-1} W^*_{1,r}\tW\usp_{2,s_1} +  m_0^{u_2-1} W^*_{2,r}\tW\usp_{1,s_2}
                      - \e_n (m\th)^{u_2-1} W_{3,r}\tW^+_{3,s_2})\} \phantom{XXXX} \non \\
   &&\mbox{}  -  \exp\{-(m_0/\chi_n^2)(m_0^{u_1-1} W^*_1 \tW\usp_{2,s_1} +  m_0^{u_2-1} W^*_2 \tW\usp_{1,s_2}
                        - \e_n (m\th)^{u_2-1} W_3\tW^+_{3,s_2})\}, \label{DG-2}
\ena
corresponding to the difference~\Ref{DG-limit-2};
\eq\label{DG-3}
  \D\uh(r,s_1,s_2) \Def \sum_{l=0}^{s_2}\bigl\{\exp\{-(m_0/\chi_n^2)X_l^*\} - \exp\{-(m_0/\chi_n^2)X_{l+1}^*\}\bigr\},
\en
corresponding to the difference~\Ref{DG-limit-3};
\eqa
 \D\uf(r,s_1,s_2) &:=& \adbb{\sum_{l=0}^{s_1-s_2-1}}\bigl\{\exp\{-(m_0/\chi_n^2)m_0^{u_1-1} W^*_1 X_l\ut(s_1)\} \non\\
    &&\qquad\qquad\mbox{}             - \exp\{-(m_0/\chi_n^2)m_0^{u_1-1} W^*_1 X_{l+1}\ut(s_1)\}\bigr\}, \phantom{XXX} \label{DG-4}
\ena
corresponding to the difference~\Ref{DG-limit-4}; and
\eqa
 \lefteqn{\D\uv(r,s_1,s_2) }\non\\
 &&\Def   \exp\Bigl\{-(m_0/\chi_n^2)\bigl(m_0^{u_1-1} T_{s_1+1}(m_0) W^*_1 \tW^*_2
                     +  m_0^{u_2-1} T_{s_2+1}(m_0) W^*_2 \tW^*_1 \non\\
 &&\qquad\qquad\qquad\mbox{}                    - \e_n (m\th)^{u_2-1} T_{s_2+1}(m\th) W_3\tW^+_3\bigr)\Bigr\} \non \\
   &&\qquad\mbox{}     - \exp\Bigl\{-(m_0/\chi_n^2)\Bigl( (m_0^{u_1-1} W^*_1 \tW^*_2 + m_0^{u_2-1} W^*_2 \tW^*_1)\Bmmio \non\\
 &&\qquad\qquad\qquad\quad\mbox{}       - \e_n (m\th)^{u_2-1} W_3\tW^+_3\Bmmith \Bigr) \Bigr\},   \label{DG-5}
\ena
corresponding to the difference~\Ref{DG-limit-5}.
Our aim is now to bound $|\ex\{\D(r,s_1,s_2)\}|$, which we do by taking each of the $|\ex\{\D\ul(r,s_1,s_2)\}|$ in turn.

First, it is immediate, using~\Ref{DG-Z3-mean} and the inequality $|e^{-x} - e^{-x'}| \le |x-x'|$ in $x,x' \ge 0$, that
\eqs
    |\ex\{\D\ui(r,s_1,s_2)\}| &\le& n^{-1} m_0\ex|Z_{3,r}\tZ^+_{3,s_2}| \non\\[1ex]
    &\le& (s_2+1) \begin{cases}
             n^{-1}m_0        & \text{ if } m\th \le 1; \\[2ex]
             \frac{m_0}{\chi_n^2} \Bigl(\frac{m\th}{m_0}\Bigr)^{2r_n} (m\th)^{u_2-1}   & \text{ if } 1 < m\th \le \sqrt{m_0}.
          \end{cases}
\ens
Recalling that we have $-2r_n < u_2 \le \half \log n/\log m_0$,
it follows that, if $0 \le m\th \le \sqrt{m_0}$, then
\eq\label{DG-1a}
    |\ex\{\D\ui(r,s_1,s_2)\}| \Le c\ui(m,\th)m_0^{u_2} n^{-1/2}\log n,
\en
where~$c\ui(m,\th)$ is a suitable constant; for $m\th > \sqrt{m_0}$, we have $\D\ui(r,s_1,s_2) = 0$.
\ignore{
and, for $\g' := \log(m\th)/\log m_0$,
\eq\label{DG-alpha-def}
    \a \Def \begin{cases}
             1 - \g'/2  & \text{ if }  m\th \le 1; \\ 
             1 - \g'    & \text{ if } m\th > 1.    
            \end{cases}
\en
}

 The argument for bounding $|\ex\{\D\uv(r,s_1,s_2)\}|$ is similarly direct,
 \adbr{using the explicit formula for the tail of a geometric sum, as well as~\Ref{DG-rss-def},
and yields}
\eqa
   \adbb{|\ex\{\D\uv(r,s_1,s_2)\}|}
   &\le&  \frac{m_0}{\chi_n^2}\,\Bigl\{\Bmmio(m_0^{u_1-s_1-2} + m_0^{u_2-s_2-2}) + \e_n\Bmmith (m\th)^{u_2-s_2-2} \Bigr\}\non\\
                             &\le& c\uv(m,\th)n^{-1/2} m_0^{u_1/2}, \label{DG-5a}
\ena
for a suitable constant~$c\uv(m,\th)$;  here, for the final inequality, \adbr{if $m\th > \sqrt{m_0}$\,,} we use
\eq\label{DG-epsn-power-bnd}
     \e_n(m,\th)(m\th)^{-r_n} \Eq m_0^{-r_n} (m\th/m_0)^{r_n} \Le m_0 n^{-1/2}.
\en

For $|\ex\{\D\ut(r,s_1,s_2)\}|$, we argue along the lines used for~\Ref{B-diff-1}, using the inequality~\Ref{ineq-1} with
$x =y= X(r,s_1,s_2)$ and $x' = X'(r,s_1,s_2)$, where
\[
      X(r,s_1,s_2) \Def (m_0/\chi_n^2)(m_0^{u_1-1} W^*_{1,r}\tW\usp_{2,s_1} +  m_0^{u_2-1} W^*_{2,r}\tW\usp_{1,s_2}
                                       - \e_n (m\th)^{u_2-1} W_{3,r} \tW^+_{3,s_2})
\]
and
\[
    X'(r,s_1,s_2) \Def (m_0/\chi_n^2)(m_0^{u_1-1} W^*_1 \tW\usp_{2,s_1} +  m_0^{u_2-1} W^*_2 \tW\usp_{1,s_2}
                               - \e_n (m\th)^{u_2-1} W_3 \tW^+_{3,s_2}).
\]
Observe that $\ex\bigl\{\exp\{-X(r,s_1,s_2)\}(X(r,s_1,s_2) - X'(r,s_1,s_2))\bigr\} = 0$, because
the processes $W^*_{1,\cdot}$,
$W^*_{2,\cdot}$ and~$W_{3,\cdot}$ are $(\cF^Z_i,\,i\ge0)$ martingales, and independent of~$\tZ$, and
$W^*_{1,r}$, $W^*_{2,r}$ and~$W_{3,r}$
are $\cF^Z_r$-measurable.  This, using \Ref{DG-W-diff-square}  and~\Ref{DG-Wsum-square}, 
yields
\eqa
   \lefteqn{|\ex\{\D\ut(r,s_1,s_2)\}|} \non\\
   &&\Le \frac12 \ex\{(X(r,s_1,s_2) - X'(r,s_1,s_2))^2\} \non\\
   &&\Le \frac32 (m_0/\chi_n^2)^2 \bigl(m_0^{2(u_1-1)} \ex\{(W^*_{1,r} - W^*_1)^2\}\ex\{(\tW\usp_{2,s_1})^2\} \non\\
     &&\qquad\qquad\qquad\qquad\mbox{}          + m_0^{2(u_2-1)} \ex\{(W^*_{2,r} - W^*_2)^2\}\ex\{(\tW\usp_{1,s_2})^2\} \non\\
     &&\qquad\qquad\qquad\qquad\qquad\mbox{}  + \e_n^2 (m\th)^{2(u_2-1)}\ex\{(W_{3,r} - W_3)^2\}\ex\{(\tW^+_{3,s_2})^2\}\bigr) \non\\
   &&\Le c\ut(m,\th) n^{-1/2}m_0^{3u_1/2}, \label{DG-2a}
\ena
for a suitable constant~$c\ut(m,\th)$, again using~\Ref{DG-epsn-power-bnd}.

For $|\ex\{\D\uh(r,s_1,s_2)\}|$, we bound the absolute value of each term
\[
   \ex\{\D^{(3,l)}(r,s_1,s_2)\} \Def \ex\bigl\{\exp\{-(m_0/\chi_n^2)X_l^*\}\bigr\}
             - \ex\bigl\{\exp\{-(m_0/\chi_n^2)X_{l+1}^*\}\bigr\}
\]
in the sum separately, using an argument analogous to that used for~\Ref{B-diff-1}. We use the inequality~\Ref{ineq-1} with
$x = \tX(r,s_1,s_2)$, $x' = \tX'(r,s_1,s_2)$ and $y = V(r,s_1,s_2)$, where
\[
   \tX(r,s_1,s_2) \Def (m_0/\chi_n^2)X_l^*;\qquad \tX'(r,s_1,s_2) \Def (m_0/\chi_n^2)X_{l+1}^*,
\]
and, recalling \Ref{DG-XV-def} and~\Ref{DG-XV3-def},
\[
    V(r,s_1,s_2) \Def (m_0/\chi_n^2)\bigl(m_0^{u_1-1} W^*_1 V_{l+s_1-s_2}\ut(s_1) + m_0^{u_2-1} W^*_2 V_l\ui(s_2)
                                       - \e_n (m\th)^{u_2-1} W_3 V_l\uh(s_2)\bigr),
\]
measurable with respect to $\cF^Z \vee \cF^{\tZ}_{s_2-l}$.  Note also that, from~\Ref{DG-X*-def},
\Ref{DG-XV-diffs} and \Ref{DG-XV3-diffs},
\eqa
     \lefteqn{\tX(r,s_1,s_2) - \tX'(r,s_1,s_2)} \non \\
     &=& (m_0/\chi_n^2)\bigl\{m_0^{u_1-1} W^*_1 \adbr{(X_{l+s_1-s_2}\ut(s_1) - X_{l+1+s_1-s_2}\ut(s_1))} +
                                 m_0^{u_2-1} W^*_2 (X_l\ui(s_2) - X_{l+1}\ui(s_2)) \non \\
      &&\qquad\qquad\qquad\mbox{}                           - \e_n (m\th)^{u_2-1} W_3 (X_l\uh(s_2) - X_{l+1}\uh(s_2))\bigr\} \non \\
          &=&   (m_0/\chi_n^2)\Bigl\{\adbr{m_0^{-l-s_1+s_2}} m_0^{u_1-1} W^*_1 (\tW^*_2 - \adbr{\tW^*_{2,s_2-l}})
                   +  m_0^{-l} m_0^{u_2-1} W^*_2(\tW^*_1 - \tW^*_{1,s_2-l}) \non \\
       &&\qquad\qquad\qquad\mbox{}       - \e_n(m\th)^{-l + u_2-1} W_3(\tW_3 - \tW_{3,s_2-l})\Bigr\},\label{DG-tXd-diff}
\ena
\adbr{having conditional expectation~$0$, given $\cF^Z \vee \cF^{\tZ}_{s_2-l}$,}
and that
\eqa
   \lefteqn{\tX(r,s_1,s_2) - V(r,s_1,s_2)} \non \\
   &=& (m_0/\chi_n^2)\bigl\{m_0^{u_1-1} W^*_1\adbr{(X_{l+s_1-s_2}\ut(s_1) - V_{l+s_1-s_2}\ut(s_1))}
             + m_0^{u_2-1} W^*_2 (X_l\ui(s_2) - V_l\ui(s_2)) \non \\
     &&\qquad\qquad\qquad\mbox{}         - \e_n (m\th)^{u_2-1} W_3 (X_l\uh(s_2) - V_l\uh(s_2))\bigr\}\non \\
   &=& (m_0/\chi_n^2)\Bigl(T_{l+s_1-s_2}(m_0) 
               m_0^{u_1-1} W^*_1\adbr{(\tW^*_2 - \tW^*_{2,s_2-l})}
                          + T_l(m_0) m_0^{u_2-1} W^*_2(\tW^*_1 - \tW^*_{1,s_2-l}) \non \\
     &&\qquad\qquad\qquad\mbox{}          - \e_nT_l(m\th)(m\th)^{u_2-1} W_3 (\tW_3 - \tW_{3,s_2-l})\Bigr),\label{DG-tXV-diff}
\ena
again having conditional expectation~$0$, given $\cF^Z \vee \cF^{\tZ}_{s_2-l}$.
Now, from \Ref{DG-W-diff-square},  \Ref{DG-tXd-diff} and~\Ref{DG-tXV-diff},
we have
  \begin{align}
     &m_0^l\ex\{(\tX(r,s_1,s_2) - \tX'(r,s_1,s_2))^2\} \non\\
     &\Le 3(m_0/\chi_n^2)^2
             \Bigl\{2m_0^{2(u_2-1)-l}\Bmmio^2 m_0^{-s_2+l}
                    + m_0^l\e_n^2(m\th)^{2(u_2-1-l)}\Bmmith^2(m\th)^{-s_2+l}\Bigr\} \non\\
            &\Le 3 m_0^{2u_2-s_2+2} \biggl(\frac2{(m_0-1)^2} + \frac{\bone_{\{m\th > \sqrt{m_0}\}}}{(m\th-1)^2}\biggr) \non\\
            &\ =:\  c\uh_1(m,\th)m_0^{2u_2-s_2} ,\label{ADB-A1}
   \end{align}
since, from \Ref{DG-epsn-def}, for $l \le s_2$ and $m\th > \sqrt{m_0}$,
\[
     \e_n^2 (m\th/m_0)^{2u_2 -l - s_2} \Le (m\th/m_0)^{2(2r_n + u_2 - r_n - t_2)}
          \Eq (m\th/m_0)^{2(r_n + t + 1)},
\]
from \Ref{ADB-A2} and~\Ref{DG-rss-def}, and $r_n + t + 1 \ge 0$. By similar arguments,
and using $u_1 \ge u_2$, it follows that
    \begin{align}
     &m_0^{-l}\ex\{(\tX(r,s_1,s_2) - V(r,s_1,s_2))^2\} \non\\ &\Le 3(m_0/\chi_n^2)^2
             \Bigl\{\Bmmio^3 m_0^{-s_2-2}(m_0^{2u_1} + m_0^{2u_2})
                    + m_0^{-l}\e_n^2\Bmmith^3 (m\th)^{2(u_2-1)-s_2+l} \Bigr\} \non\\
            &\Le 3 m_0^{2u_1-s_2+2} \biggl(\frac{2m_0}{(m_0-1)^3}
                     + \frac{m\th\bone_{\{m\th > \sqrt{m_0}\}}}{(m\th-1)^3}\biggr) \non\\
            &\ =:\  c\uh_2(m,\th)m_0^{2u_1-s_2} .   \label{ADB-A3}
    \end{align}
Thus, from  \Ref{ineq-1}, \Ref{ADB-A1} and~\Ref{ADB-A3},
\eqs
   \lefteqn{|\ex\{\D^{(3,l)}(r,s_1,s_2)\}|} \\
   &\le& \frac12 \ex\{(\tX(r,s_1,s_2) - \tX'(r,s_1,s_2))^2\} \\
   &&\qquad\qquad\qquad\mbox{}    + \ex\{|\tX(r,s_1,s_2) - \tX'(r,s_1,s_2)|\,|\tX(r,s_1,s_2) - V(r,s_1,s_2)|\} \\[1ex]
   &\le& \frac12  c\uh_1(m,\th)m_0^{2u_2-s_2} + m_0^{u_1+u_2-s_2}\sqrt{c\uh_1(m,\th) c\uh_2(m,\th)} \\
       &\le& c\uh(m,\th) n^{-1/2} m_0^{3u_1/2},
\ens
for a suitable constant~$c\uh(m,\th)$, again using~\Ref{DG-epsn-power-bnd}.
Adding over $0 \le l \le s_2$, and using~\Ref{DG-rss-def}, this gives
\eq\label{DG-3a}
   |\ex\{\D\uh(r,s_1,s_2)\}| \Le (r_n + t_2 + 1) n^{-1/2} c\uh(m,\th) m_0^{3u_1/2}.
\en
The same argument can be used for $|\D\uf(r,s_1,s_2)|$ as well, leading to a smaller bound for each~$l$, because the elements
involving $m_0^{u_2-1} W^*_2$ and $(m\th)^{u_2-1}W_3$ are not present, giving
\eq\label{DG-4a}
   |\ex\{\D\uf(r,s_1,s_2)\}| \Le  (t_1 - t_2)c\uh(m,\th) n^{-1/2} m_0^{3u_1/2}.
\en

Combining \Ref{DG-1a}--\Ref{DG-4a}, this results in a bound
\eqa
    |\ex\{\D(r,s_1,s_2)\}| &\le&   c\ui(m,\th) m_0^{u_2/2} n^{-1/2}\log n + c\uv(m,\th)n^{-1/2} m_0^{u_1/2} \label{DG-main-bnd-2}\\
    &&\quad\mbox{} +  c\ut(m,\th)n^{-1/2}m_0^{3u_1/2} + 2c\uh(m,\th)(\log n/\log m_0) n^{-1/2} m_0^{3u_1/2}.
       \non
\ena
Together with~\Ref{DG-main-bnd-1}, this establishes the theorem.
\end{proof}

\section*{Acknowledgement}
ADB thanks the mathematics departments of the University of Melbourne and of Monash University, for
their kind hospitality while part of the work was undertaken.
GR was supported in part by EPSRC grants EP/T018445/1, EP/V056883/1, EP/Y028872/1 and EP/X002195/1.
ADB was supported in part by Australian Research
Council project DP220100973.

 \end{document}